\begin{document}

\numberwithin{equation}{section}
\def\la{\langle}
\def\ra{\rangle}
\def\glexe{\leq_{gl}\,}
\def\glex{<_{gl}\,}
\def\e{{\rm e}}

\def\fac{{\rm !}}
\def\x{\mathbf{x}}
\def\P{\mathbb{P}}
\def\S{\mathbf{S}}
\def\h{\mathbf{h}}
\def\y{\mathbf{y}}
\def\bz{\mathbf{z}}
\def\F{\mathcal{F}}
\def\R{\mathbb{R}}
\def\T{\mathbf{T}}
\def\N{\mathbb{N}}
\def\D{\mathbf{D}}
\def\V{\mathbf{V}}
\def\U{\mathbf{U}}
\def\K{{\mathcal{K}}}
\def\Q{\mathbf{Q}}
\def\M{\mathbf{M}}
\def\oM{\overline{\mathbf{M}}}
\def\O{\mathbf{O}}
\def\C{\mathbb{C}}
\def\P{\mathbb{P}}
\def\Z{\mathbb{Z}}
\def\H{\mathcal{H}}
\def\A{\mathbf{A}}
\def\V{\mathbf{V}}
\def\AA{\overline{\mathbf{A}}}
\def\B{\mathbf{B}}
\def\c{\mathbf{C}}
\def\L{\mathcal{L}}
\def\bS{\mathbf{S}}
\def\H{\mathcal{H}}
\def\I{\mathbf{I}}
\def\Y{\mathbf{Y}}
\def\X{\mathbf{X}}
\def\G{\mathbf{G}}
\def\f{\mathbf{f}}
\def\z{\mathbf{z}}
\def\v{\mathbf{v}}
\def\y{\mathbf{y}}
\def\d{\hat{d}}
\def\x{\mathbf{x}}
\def\bI{\mathbf{I}}
\def\y{\mathbf{y}}
\def\g{\mathbf{g}}
\def\w{\mathbf{w}}
\def\b{\mathbf{b}}
\def\a{\mathbf{a}}
\def\1{\mathbf{1}}
\def\u{\mathbf{u}}
\def\q{\mathbf{q}}
\def\e{\mathbf{e}}
\def\s{\mathcal{S}}
\def\cc{\mathcal{C}}
\def\co{{\rm co}\,}
\def\tg{\tilde{g}}
\def\tx{\tilde{\x}}
\def\tg{\tilde{g}}
\def\tA{\tilde{\A}}

\def\supmu{{\rm supp}\,\mu}
\def\supp{{\rm supp}\,}
\def\cd{\mathcal{C}_d}
\def\cok{\mathcal{C}_{\K}}
\def\cop{COP}
\def\vol{{\rm vol}\,}

\newcommand{\ignore}[1]{}

\newcommand{\HH}{{\mathcal H}}

\newtheorem{theorem}{Theorem}[section]
\newtheorem{claim}[theorem]{Claim}
\newtheorem{assumption}{Assumption}
\newtheorem{definition}[theorem]{Definition}
\newtheorem{corollary}[theorem]{Corollary}
\newtheorem{property}[theorem]{Property}
\newtheorem{proposition}[theorem]{Proposition}
\newtheorem{lemma}[theorem]{Lemma}
\newtheorem{remark}[theorem]{Remark}
\newtheorem{example}[theorem]{Example}
\newtheorem{algorithm}{Algorithm}
\newtheorem{question}{Question}
\newtheorem{conjecture}{Conjecture}

\newcommand{\pmax}{p_{\text{\rm max}}}
\newcommand{\phan}{p_{\text{\rm han}}}
\newcommand{\tH}{\tilde H}
\newcommand{\ti}{\tilde}
\newcommand{\oR}{{\mathbb R}}
\newcommand{\oN}{{\mathbb N}}
\newcommand{\oP}{{\mathbb P}}
\newcommand{\cF}{{\mathcal F}}
\newcommand{\pmin}{p_{\min}}
\newcommand{\PP}{{\mathcal P}}
\newcommand{\II}{{\mathcal I}}
\newcommand{\ux}{\underline{x}}
\newcommand{\plas}{p_{\text{\rm las}}}
\newcommand{\BB} {{\mathcal B}}
\newcommand{\CC}{{\mathcal C}}
\newcommand{\tcolred}{\textcolor{black}}
\newcommand{\tcolgreen}{\textcolor{green}}
\newcommand{\tcolblue}{\textcolor{blue}}
\newcommand{\tcolmag}{\textcolor{magenta}}
\newcommand{\sfT}{\sf T}
\newcommand{\sa}{\text{\rm sa}}


\title{Bound-constrained polynomial optimization using only elementary calculations}
\author{Etienne de Klerk
 \and Jean B. Lasserre
\and Monique Laurent
\and Zhao Sun
}

\address{Tilburg University and Delft University of Technology;
PO Box 90153, 5000 LE Tilburg, The Netherlands.}
\email{E.deKlerk@uvt.nl}
\address{LAAS-CNRS and Institute of Mathematics,
University of Toulouse,
LAAS, 7 avenue du Colonel Roche,
31077 Toulouse C\'edex 4, France,
Tel: +33561336415}
\email{lasserre@laas.fr}
\address{Centrum Wiskunde \& Informatica (CWI), Amsterdam and Tilburg University;
CWI, Postbus 94079, 1090 GB Amsterdam, The Netherlands.}
\email{M.Laurent@cwi.nl}
\address{\'Ecole Polytechnique de Montr\'eal;
Canada Excellence Research Chair in Data Science for Real-Time Decision-Making, C.P. 6079, Succ. Centre-ville, Montr¨¦al, H3C 3A7, Canada.}
\email{Zhao.Sun@polymtl.ca}

\date{}
\keywords{Polynomial optimization \and bound-constrained optimization\and  Lasserre hierarchy}
\subjclass[2000]{90C22\and 90C26 \and 90C30}
\begin{abstract}
We provide a monotone non-increasing
sequence of upper bounds $f^H_k$ ($k\ge 1$) converging to the global minimum of a polynomial $f$ on simple sets like
the unit hypercube in $\mathbb{R}^n$.
The novelty with respect to the converging sequence of upper bounds in
[J.B.  Lasserre,  A new look at nonnegativity on closed sets and polynomial optimization,  \emph{SIAM J. Optim.} {\bf 21},  pp. 864--885, 2010]
is that only elementary computations are required.
For optimization over the hypercube  $[0,1]^n$, we show that the new bounds $f^H_k$ have a rate of convergence in $O(1/\sqrt {k})$. Moreover we show a stronger convergence rate  in $O(1/k)$ for quadratic polynomials and more generally for polynomials having a rational minimizer in the hypercube.
{In comparison,  evaluation of all rational grid points with denominator $k$ produces bounds with
 a rate of convergence in $O(1/k^2)$,  but at the cost of $O(k^n)$ function evaluations, while the new bound $f^H_k$ needs only $O(n^k)$ elementary calculations.}


\end{abstract}

\maketitle

\section{Introduction}

Consider the problem of computing the global minimum
\begin{equation}\label{pb-def}
f_{\min,\K} \:=\,\min\,\{f(\x):\: \x\in\K\,\},
\end{equation}
of a polynomial  $f$ on a compact set $\K\subset \mathbb{R}^n$. (We will mainly deal with the case where $\K$ is a basic semi-algebraic set.)

A fruitful perspective, introduced by Lasserre \cite{lass-siopt-01}, is to reformulate problem \eqref{pb-def} as
\[
f_{\min,\K} = \inf_{\mathbf{\mu}} \int_{\K} fd\mathbf{\mu},
\]
where the infimum is taken over all probability measures $\mu$ with support in $\K$.
Using this reformulation one may obtain a sequence of \emph{lower bounds} on $f_{\min,\K}$ that converges to $f_{\min,\K}$,
by introducing tractable convex relaxations of the set of probability measures with support in $\K$ (if $\K$ is semi-algebraic).
For more details on this approach the interested reader is referred to
Lasserre \cite{cras,lass-siopt-01,lass-book}, and   \cite{laurent,Lasserre_MOR_2002} for a comparison between linear programming (LP)  and
semidefinite programming
(SDP) relaxations.

As an alternative, one may obtain a sequence of {\em upper bounds} by optimizing over specific classes of probability distributions.
In particular, Lasserre \cite{lass-siopt-10} defined the sequence (also called hierarchy) of upper bounds
\begin{equation}\label{eqfsosk}
f_k^{sos} := \min_{\sigma \in \Sigma_k[\x]} \left\{ \int_{\K} f(\x)\sigma(\x)d\x \; : \; \int_{\K} \sigma(\x)d\x = 1\right\}, \quad (k= 1,2,\ldots),
\end{equation}
where $\Sigma_k[\x]$ denotes the cone of sums of squares (SOS) of polynomials of degree at most $2k$. Thus the optimization is restricted to
probability distributions where the probability density function is an SOS polynomial of degree at most $2k$.
Lasserre \cite{lass-siopt-10} showed that $f_k^{sos} \rightarrow f_{\min,\K}$ as $k \rightarrow \infty$ (see Theorem \ref{th-lasserre} below for a precise statement).
 In principle this approach works for any
 compact set $\K$ and any polynomial but for practical implementation it requires knowledge of moments of the measure $\sigma(\x)d\x$.
  So in practice the approach is limited to {\it simple} sets $\K$ like the Euclidean ball, the hypersphere, the simplex, the hypercube and/or their
image by a linear transformation.

In fact computing such upper bounds reduces to computing the smallest generalized eigenvalue associated with two
 real symmetric matrices whose size increases in the hierarchy.
For more details the interested reader is referred to Lasserre \cite{lass-siopt-10}. In a recent paper,
De Klerk et al. \cite{deklerk} have provided the first convergence analysis for this hierarchy and shown a bound $f_k^{sos} - f_{\min,\K} = O(1/\sqrt{k})$ on the rate of convergence.
In a related analysis of convergence
Romero and Velasco \cite{romero} provide a bound on the rate at which one may approximate from outside
the cone of nonnegative homogeneous polynomials (of fixed degree)  by the hierarchy of spectrahedra defined in
\cite{lass-siopt-10}.

{It should be emphasized that it is a difficult challenge in optimization to obtain a sequence of upper bounds converging to the global minimum
and having a known estimate on the rate of convergence.}
So even if the convergence to the global minimum of the hierarchy of upper bounds obtained in \cite{lass-siopt-10}
is rather slow, and even though it applies to the restricted context of ``simple sets",
to the best of our knowledge it provides { one of the first results} of this kind.
{A notable earlier result
was obtained for polynomial optimization over the simplex, where it has been shown that brute force grid search leads to a polynomial time approximation scheme for minimizing polynomials of fixed degree \cite{BK02,KLP06}. When minimizing over the set of grid points in the standard simplex with given denominator $k$, the rate of convergence is in $O(1/k)$ \cite{BK02,KLP06} and, for quadratic polynomials (and for general polynomials having  a rational minimizer), in $O(1/k^2)$ \cite{KLS14}.  Grid search over the hypercube was also shown to have a   rate of convergence in $O(1/k)$ \cite{KL10} and, as we will indicate in this paper, a stronger rate of convergence in $O(1/k^2)$ can be shown. Note however that computing the best grid point in the hypercube $[0,1]^n$ with denominator $k$ requires $O(k^n)$ computations, thus exponential in the dimension.}

\subsection*{Contribution}
{As our main contribution} we provide a monotone non-increasing converging sequence $(f^H_k)_{k\in\N}$,
of upper bounds $f^H_k\geq f_{\min,\K}$ such that $f^H_k\to f_{\min,\K}$ as $k\to\infty$.
{The parameters $f^H_k$ can be effectively computed}
when the set $\K \subseteq [0,1]^n$ is a ``simple set'' like,  for example, a Euclidean ball,  sphere,  simplex,  hypercube,
or any linear transformation of them.

This ``hierarchy" of upper bounds is inspired from
the one defined by Lasserre in \cite{lass-siopt-10}, but with the novelty that:
\begin{center}
{\it Computing the upper bounds
$(f^H_k)$ does not require solving an SDP or computing the smallest generalized eigenvalue of some pair of matrices (as is the case in \cite{lass-siopt-10}).
It only requires elementary calculations (but possibly many of them for good quality bounds).}
\end{center}
Indeed, computing the upper bound $f^H_k$ only requires finding the minimum in a list of
$O(n^k)$ scalars $(\gamma_{(\eta,\beta)})$, formed from the moments $\boldsymbol{\gamma}$ of the Lebesgue measure on the set $\K\subseteq [0,1]^n$ and from
the coefficients $(f_\alpha)$ of the  polynomial $f$ to minimize. Namely:
\begin{equation}
\label{maxcutformula}
f^H_k\,:=\,\displaystyle\min_{(\eta,\beta)\in\N^{2n}_k}\:\sum_{\alpha\in\N^{n}}f_{\alpha}\,\frac{\gamma_{(\eta+\alpha,\beta)}}{\gamma_{(\eta,\beta)}},\end{equation}
where $\N$ denotes the nonnegative integers, $f(\x)=\sum_{\alpha\in\N^n}f_\alpha\,\x^\alpha$, $\N^{2n}_k=\{(\eta,\beta)\in\N^{2n}:\vert\eta+\beta\vert=k\}$, and the scalars
\[\gamma_{(\eta,\beta)}\,:=\,\int_\K x_1^{\eta_1}\cdots x_n^{\eta_n}
(1-x_1)^{\beta_1}\cdots (1-x_n)^{\beta_n}\,d\x,\quad (\eta,\beta)\in\N^{2n},\]
are available in closed-form. (Our informal notion of ``simple set'' therefore means that the moments $\gamma_{(\eta,\beta)}$ are known a priori.)

The upper bound (\ref{maxcutformula}) has also a simple interpretation as it reads:
\begin{equation}\label{eqfHk}
f^H_k=\,\displaystyle\min_{(\eta,\beta)\in\N^{2n}_k}\:\frac{\displaystyle\int_\K f(\x)\,\x^\eta(\1-\x)^\beta\,d\x}
{\displaystyle\int_\K \x^\eta(\1-\x)^\beta\,d\x}
\,=\,\displaystyle\min_{\mu}\,\left\{\displaystyle\,\int_\K f\,d\mu:\:\mu\in M(\K)_k\,\right\},
\end{equation}
where $M(\K)_k$ is the set of probability measures on $\K$, absolutely continuous with respect to the Lebesgue measure on $\K$,
and whose density is a monomial $\x^\eta(\1-\x)^\beta$ with $(\eta,\beta)\in\N^{2n}_k$.
(Such measures are in fact products of (univariate) beta distributions, see Section \ref{secbeta}.)
This also proves that
at any point $\a\in [0,1]^n$ one may approximate the Dirac measure $\delta_\a$
with measures of the form  $d\mu=\x^\eta(\1-\x)^\beta\,d\x$ (normalized to make then probability measures).

{For the case of the hypercube $\K=[0,1]^n$, we analyze  the rate of convergence of the bounds $f^H_k$ and
 show a rate of convergence in $O(1/\sqrt {k})$ for general polynomials, and in $O(1/k)$ for quadratic polynomials (and general polynomials having a rational minimizer).
As a second minor contribution, we revisit grid search over the rational points with given denominator $k$ in the hypercube and observe that its convergence rate is in $O(1/k^2)$ (which follows as an easy application of Taylor's theorem).  However as observed earlier the computation of the best grid point with denominator $k$ requires $O(k^n)$ function evaluations  while the computation of the parameter $f^H_k$ requires only $O(n^k)$ elementary calculations.}

\subsection*{Organization of the paper.}
 We start with  some basic facts about the bounds $f^H_k$ in Section \ref{secprel} and in Section \ref{secconv} we show their convergence to the minimum of $f$ over the set $\K$ (see Theorem \ref{th-main}).

In Section \ref{secerror}, for the case of the hypercube $\K=[0,1]^n$, we analyze the quality of the bounds $f^H_k$.
We show a convergence rate in $ O(1/\sqrt{k})$ for the range $f^H_k - f_{\min,\K}$ and a stronger convergence rate
 in $O(1/k)$ when  the polynomial $f$ admits a rational minimizer in $[0,1]^n$  (see Theorem \ref{cor:convergence rate}).
  This stronger  convergence rate applies in particular to  quadratic polynomials (since they have a rational minimizer) and  {Example \ref{extight} shows that this bound is tight. When no rational minimizer exists the weaker rate follows using Diophantine approximations.}
So again the main message of this paper is that one may obtain non-trivial upper bounds with error guarantees (and converging to the global minimum) via elementary calculations
and without invoking a sophisticated algorithm.

{In Section \ref{secgrid} we revisit the simple technique which consists of evaluating the polynomial $f$ at all  rational points in $[0,1]^n$ with given denominator $k$. By a simple application of Taylor's theorem we can show a  convergence rate  in $O(1/k^2)$.
However, in terms of computational complexity, the parameters $f^H_k$ are easier to compute. Indeed, for fixed $k$, computing $f^H_k$ requires $O(n^k)$ computations (similar to function evaluations),
while computing the minimum of $f$ over all grid points with given denominator $k$ requires an exponential number $k^n$ of function evaluations. }

In Section \ref{sec:feasible points} we present some  additional (simple) techniques to provide a feasible point $\hat \x \in \K$ with value $ f(\hat \x) \le  f^H_k$,
once the  upper bound $f^H_k$ has been computed,  hence also with an error bound guarantee in the case of the box $\K=[0,1]^n$.
 This includes, in the case when $f$ is convex, getting a feasible point using Jensen inequality (Section \ref{secconvex}) and, in the general case, taking the mode $\hat \x$ of the optimal  density   function (i.e., its global maximizer) (see Section \ref{secmode}).

 In Section \ref{sec:numerical results}, we present some numerical experiments, carried out on several test functions on the box $[0,1]^n$.
In particular, we compare the values of the new bound $f^H_k$
 with the  bound $f^{sos}_{k/2}$ (whose definition uses a sum of squares density), and we apply the proposed techniques to find a feasible point in the box. As expected the sos based bound is tighter in most cases but the bound $f^H_k$ can be computed for much larger values of $k$. Moreover, the feasible points $\hat \x$ returned by the proposed mode heuristic are often of very good quality for sufficiently large $k$.
Finally, in Section \ref{sec:conclusion} we conclude with some remarks on variants of the bound $f^H_k$ that may offer better results in practice.


\section{Notation, definitions and preliminary results}\label{secprel}

Throughout we let $\R[\x]$ denote the ring of polynomials in the variables $\x=(x_1,\ldots,x_n)$,
$\R[\x]_d$ is the subspace of polynomials of degree at most $d$, and
$\Sigma[\x]_d\subset\R[\x]_{2d}$ is its subset of sums of squares (SOS) of degree at most $2d$.

We use the convention that $\N$ denotes the set of nonnegative integers, and
set $\N^n_d:=\{\alpha\in \N^n:\sum_{i=1}^n\alpha_i\,(=:\vert\alpha\vert)= d\}$, and similarly
$\N^n_{\le d}:=\{\alpha\in \N^n:\sum_{i=1}^n\alpha_i\,\le d\}$.
The notation $\x^\alpha$ stands for the monomial $x_1^{\alpha_1}\cdots x_n^{\alpha_n}$,
while $(\1-\x)^\alpha$ stands for $(1-x_1)^{\alpha_1}\cdots (1-x_n)^{\alpha_n}$, $\alpha\in\N^n$.
We will also denote $[n] = \{1,2,\ldots,n\}$ and let $\mathbf 1$ denote the all-ones vector (of suitable size).

One may write every polynomial $f\in\R[\x]_d$ in the monomial basis
\[\x\mapsto f(\x)\,=\,\sum_{\alpha\in\N^{n}_{\le d}}f_{\alpha}\,\x^\alpha,\]
with vector of (finitely many) coefficients $(f_\alpha)$.

\subsection{The bounds $f^{sos}_k$ and $f^H_k$}
In \cite{lass-siopt-10}, Lasserre introduced the parameters $f^{sos}_k$ as upper bounds for the minimum $f_{\min,\K}$ of $f$ over $\K$ and he proved the following result.
\begin{theorem}[Lasserre \cite{lass-siopt-10}]
\label{th-lasserre}
Let $\K\subseteq \R^n$ be compact, let $f_{\min,\K}$ be as in (\ref{pb-def}), and let
\begin{equation}
\label{th-eq1}
f^{sos}_k\,:=\,\inf_{\sigma}\,\left\{\displaystyle\int_\K f(\x)\,\sigma(\x)\,d\x:\:
\displaystyle\int_\K \sigma(\x)\,d\x=1,\sigma\in\Sigma[\x]_k\,\right\},\quad k\in \N.
\end{equation}
Then $f_{\min,\K}\,\leq\,f^{sos}_{k+1}\,\leq\, f^{sos}_{k}$ for all $k$ and
\begin{equation}
\label{th-eq2}
f_{\min,\K}\,=\,\displaystyle\lim_{k\to\infty}\:f^{sos}_k.
\end{equation}
\end{theorem}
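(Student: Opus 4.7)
The plan is to prove the three assertions in order, starting with the two easy ones (monotonicity and lower bound) and spending most of the work on the limit statement \eqref{th-eq2}.

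\textbf{Monotonicity and lower bound.} For monotonicity, I observe that $\Sigma[\x]_k\subseteq \Sigma[\x]_{k+1}$, because a sum of squares of polynomials of degree at most $k$ is also a sum of squares of polynomials of degree at most $k+1$. Since the feasible region of the defining minimization program for $f^{sos}_{k+1}$ thus contains that of $f^{sos}_k$, we get $f^{sos}_{k+1}\le f^{sos}_k$. For the lower bound $f_{\min,\K}\le f^{sos}_k$, I note that any feasible $\sigma\in\Sigma[\x]_k$ with $\int_\K \sigma\,d\x =1$ is nonnegative on $\R^n$, so $\sigma\,d\x$ is a probability measure on $\K$; then $\int_\K f\sigma\,d\x \ge f_{\min,\K} \int_\K \sigma\,d\x = f_{\min,\K}$.

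\textbf{Convergence: construction of a concentrating SOS density.} The core of the proof is to produce, for any $\epsilon>0$, a feasible $\sigma\in\Sigma[\x]_k$ (for some large $k$) with $\int_\K f\sigma\,d\x\le f_{\min,\K}+\epsilon$. I would fix a global minimizer $\x^*\in\K$ with $f(\x^*)=f_{\min,\K}$; by continuity of $f$ and an approximation argument, I may assume $\x^*$ lies in the interior of $\K$ (otherwise perturb $\x^*$ slightly into the interior, paying an arbitrarily small continuity cost in $f$; here I use the standing assumption that $\K$ is compact with positive Lebesgue measure). Choose a constant $c$ large enough that $q(\x):=c-\|\x-\x^*\|^2$ is strictly positive on $\K$. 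Then $q$ attains its unique maximum on $\K$ at $\x^*$, and $q^{2k}\in\Sigma[\x]_{2k}$. Define
\[
\sigma_k(\x)\,:=\,\frac{q(\x)^{2k}}{\int_\K q(\y)^{2k}\,d\y},
\]
which is feasible for the program defining $f^{sos}_{2k}$.

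\textbf{Convergence: the concentration estimate.} Given $\epsilon>0$, pick $\delta>0$ so small that $f(\x)\le f_{\min,\K}+\epsilon$ for $\x\in B_\delta:=B(\x^*,\delta)\cap\K$, and set $F:=\max_{\x\in\K}|f(\x)-f_{\min,\K}|$. Splitting the integral:
\[
\int_\K f\sigma_k\,d\x - f_{\min,\K} \;=\;\int_\K (f-f_{\min,\K})\sigma_k\,d\x
\;\le\; \epsilon + F\cdot\frac{\int_{\K\setminus B_\delta} q^{2k}\,d\x}{\int_\K q^{2k}\,d\x},
\]
using $f\ge f_{\min,\K}$ on $\K$ and $\int_{B_\delta}\sigma_k\le 1$. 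The remaining task is to show that the ratio on the right goes to $0$ as $k\to\infty$. Let $c_1:=q(\x^*)=c$ and $c_2:=\max_{\x\in\K\setminus B_\delta}q(\x)$; since $q$ is continuous on the compact set $\K\setminus B_\delta$ and $\x^*\notin\K\setminus B_\delta$, we have $c_2<c_1$. On a small ball $B(\x^*,\delta/2)\subseteq\K$ (which has positive Lebesgue measure since $\x^*$ is interior), $q\ge c_1-\delta^2/4$. Thus
\[
\frac{\int_{\K\setminus B_\delta} q^{2k}\,d\x}{\int_\K q^{2k}\,d\x}
\;\le\;\frac{\vol(\K)\,c_2^{2k}}{\vol(B(\x^*,\delta/2))(c_1-\delta^2/4)^{2k}}\;\longrightarrow\;0
\]
geometrically, since the ratio $c_2/(c_1-\delta^2/4)$ can be made strictly less than $1$ by choosing $\delta$ small. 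Hence $\limsup_k f^{sos}_{2k}\le f_{\min,\K}+\epsilon$, and combining with the lower bound and letting $\epsilon\downarrow 0$ yields \eqref{th-eq2}.

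\textbf{Main obstacle.} The technical subtleties all sit in the concentration step: ensuring the minimizer (or a good approximation) sits in a region of $\K$ with positive Lebesgue measure in every neighborhood, so that the denominator $\int_\K q^{2k}\,d\x$ is large enough to dominate the tail. This is where compactness of $\K$, positivity of its Lebesgue measure, and the continuity of $f$ all come together.
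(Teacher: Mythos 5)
The paper itself does not prove Theorem \ref{th-lasserre}: it is quoted from Lasserre \cite{lass-siopt-10}, where the result is established for an arbitrary finite Borel measure $\mu$ whose support equals $\K$, by a measure-theoretic argument. Your blind proof is thus necessarily a different, self-contained route, and its skeleton is sound: monotonicity from $\Sigma[\x]_k\subseteq\Sigma[\x]_{k+1}$, the lower bound from the observation that a feasible $\sigma\,d\x$ is a probability measure on $\K$, and convergence via the concentrating squares $\sigma_k\propto(c-\|\x-\x^*\|^2)^{2k}$ with a geometric tail estimate — very much in the spirit of the quantitative analysis later carried out in \cite{deklerk}. The degree bookkeeping ($\sigma_k\in\Sigma[\x]_{2k}$ controls $f^{sos}_{2k}$, and monotonicity fills in the odd indices) is correct, and your tail bound actually holds for \emph{every} $\delta>0$, since $c_2\le c-\delta^2 < c-\delta^2/4$; no smallness of $\delta$ is needed there.

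The genuine gap is the perturbation step. Your standing assumption ``$\K$ compact with positive Lebesgue measure'' neither gives $\K$ a nonempty interior (fat Cantor sets have positive measure and empty interior) nor lets you move $\x^*$ into the interior at small cost in $f$: take $\K=[0,1]^n\cup\{\mathbf{a}\}$ with $\mathbf{a}$ an isolated point at which $f$ attains a value far below $\min_{[0,1]^n}f$. Then every feasible $\sigma$ integrates only over $[0,1]^n$, so $f^{sos}_k\to\min_{[0,1]^n}f>f_{\min,\K}$, and the theorem itself fails for such $\K$ (it is vacuous when $\K$ has Lebesgue measure zero, since the normalization constraint is infeasible). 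The statement implicitly presupposes that the Lebesgue measure restricted to $\K$ has support equal to $\K$ — automatic for the ``simple sets'' of this paper — and under that hypothesis your proof closes \emph{without any perturbation}: every ball $B(\x^*,\delta/2)$ then meets $\K$ in a set of positive measure, so in the denominator bound you simply replace $\vol(B(\x^*,\delta/2))$ by $\vol(B(\x^*,\delta/2)\cap\K)>0$, and the rest of your estimate is untouched. So: state the correct regularity hypothesis, delete the interior-perturbation step, and the argument is complete.
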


We will also use the following important result due to Krivine \cite{krivine1,krivine2} and Handelman \cite{handelman}.
\begin{theorem}
\label{thm:Handelman}
Let $\K=\{\x:g_j(\x)\geq0,\:j=1,\dots,m\}\subset\R^n$ be a polytope with a nonempty interior and
where each $g_j$ is an affine polynomial, $j=1,\ldots,m$. If
$f\in\R[\x]$ is strictly positive on $\K$ then
\begin{equation}\label{krivine}
f(\x)\,=\,\sum_{\alpha\in\N^m}\lambda_\alpha\,g_1(\x)^{\alpha_1}\cdots g_m(\x)^{\alpha_m},\qquad\forall \x\in\R^n,
\end{equation}
for finitely many positive scalars $\lambda_\alpha$.
\end{theorem}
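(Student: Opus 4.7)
My plan is to deduce Theorem \ref{thm:Handelman} from \emph{Pólya's theorem}, its homogeneous counterpart on the simplex: if a homogeneous polynomial $p(y_0,\ldots,y_n)$ of degree $d$ is strictly positive on the simplex $\{y\in\R^{n+1}_{\geq 0}:\sum_i y_i=1\}$, then there exists an integer $N$ such that every coefficient of $(y_0+\cdots+y_n)^N\, p(y)$, expanded in the monomial basis, is positive. Such an expansion is itself a Handelman-type decomposition relative to the simplex constraints $y_i\geq 0$.

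First I would reduce the general polytope case to the simplex setting. Since $\K$ is bounded with nonempty interior, after an affine change of variables I may assume $\K\subseteq [0,1]^n$; then adjoining the redundant affine constraints $x_i\geq 0$ and $1-x_i\geq 0$ to the list $g_1,\ldots,g_m$ alters neither $\K$ nor the cone on the right-hand side of \eqref{krivine}. Next I would homogenize by introducing a new variable $x_0$: each affine $g_j$ lifts to a linear form $\tilde g_j(x_0,x)$, and $f$ lifts to a homogeneous polynomial $\tilde f(x_0,x)$ of degree $\deg f$ after multiplying by an appropriate power of $x_0$. The strict positivity of $f$ on $\K$ translates into strict positivity of $\tilde f$ on a compact slice of $\R^{n+1}_{\geq 0}$ affinely equivalent to the standard simplex, and Pólya's theorem then produces a decomposition of a suitable multiple of $\tilde f$ as a nonnegative combination of monomials in the simplex coordinates $y_0,\ldots,y_n$.

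The main obstacle is to re-express this decomposition in terms of products of the original $g_j$'s, and this is exactly where the enlargement in the first step matters. By choosing the affine identification of the simplex with the augmented constraint set carefully, the simplex coordinates $y_i$ can be identified, up to positive scalings, with some of the lifted constraints $\tilde g_j$; each Pólya monomial $y_0^{\alpha_0}\cdots y_n^{\alpha_n}$ is then already a nonnegative multiple of a product of $\tilde g_j$'s. Dehomogenizing by setting $x_0=1$ absorbs the redundant constraints among the $\tilde g_j$'s into the coefficients and yields the desired representation \eqref{krivine} of $f$ as a finite nonnegative combination of products $g_1^{\alpha_1}\cdots g_m^{\alpha_m}$ with positive scalars $\lambda_\alpha$. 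The subtle bookkeeping in this last translation step, together with verifying that strict positivity survives each reduction, is where I would expect the argument to require the most care.
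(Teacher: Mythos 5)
You should first note that the paper does not prove this statement at all: it is imported by citation from Krivine and Handelman, so your proposal stands or falls on its own merits. Your overall strategy---deducing Handelman's theorem from P\'olya's theorem---is a legitimate and known route (it is essentially how Powers and Reznick derive Handelman representations with degree bounds), but your execution has a genuine gap at the central step. Homogenizing with a single extra variable $x_0$ produces the cone over $\K$ in $\R^{n+1}$, which has as many facets as $\K$ plus one; this cone is a simplicial cone (i.e., an orthant in suitable coordinates) only when $\K$ is a simplex. Already for the square $[0,1]^2$ the lifted constraints are the four forms $x_1,\,x_2,\,x_0-x_1,\,x_0-x_2$, so your claim that ``the simplex coordinates $y_0,\ldots,y_n$ can be identified, up to positive scalings, with some of the lifted constraints $\tilde g_j$'' cannot be realized: any choice of $n+1$ of the lifted constraints cuts out a strictly larger simplicial cone, and strict positivity of $f$ on $\K$ gives no control of $\tilde f$ there. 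Concretely, the triple $x_1,\,x_2,\,x_0-x_1$ dehomogenizes to the slab $\{0\le x_1\le 1,\ x_2\ge 0\}$, and $f=2-x_2$ is strictly positive on the square but negative on that slab, so P\'olya's hypothesis fails for the corresponding homogenization---and the same happens for every triple. Adjoining the redundant box constraints does not help, since it only increases the number of facets; thus the middle step of your argument fails for every polytope that is not a simplex, which is exactly the case the theorem is about.

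The standard repair embeds via the \emph{constraints}, not the coordinates, so the ambient simplex has dimension $m-1$ rather than $n$. Since $\K$ is bounded, the linear parts of $g_1,\ldots,g_m$ positively span $\R^n$, whence there exist $\mu_j>0$ and $c>0$ with $\sum_{j=1}^m \mu_j g_j(\x)\equiv c$ identically; after rescaling, $\x\mapsto \y(\x)=(\mu_1 g_1(\x)/c,\ldots,\mu_m g_m(\x)/c)$ is an injective affine map of $\R^n$ into the hyperplane $\sum_j y_j=1$ carrying $\K$ onto the intersection of its image with the standard $(m-1)$-simplex. Pull $f$ back through an affine left inverse to get a polynomial $F$ with $F(\y(\x))=f(\x)$; since $F$ is only known to be positive on the image slice, add $M\sum_k \ell_k(\y)^2$, where the $\ell_k$ are affine forms cutting out the image subspace and $M$ is large (a compactness argument on the simplex), to obtain strict positivity on the whole simplex; then P\'olya's theorem in the variables $y_1,\ldots,y_m$, followed by the substitution $y_j=\mu_j g_j(\x)/c$, yields exactly \eqref{krivine}. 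Two further remarks: your claim that adjoining redundant constraints ``alters neither $\K$ nor the cone'' silently requires the affine Farkas lemma (every affine function nonnegative on $\K$ is a nonnegative combination of $1$ and the $g_j$), which is true but must be invoked; in the corrected argument the box constraints are not needed at all. As it stands, however, your proof is not complete: the identification of the $(n+1)$-variable simplex with the lifted constraint set, on which everything hinges, is impossible in general.
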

We will call the expression in \eqref{krivine} the {\em Handelman representation} of $f$, and call any $f$ that allows a Handelman representation to be {\em of the Handelman type}.
Throughout we consider the following set of polynomials: 
\begin{equation}\label{eqHQ}
\HH_k := \left\{p \in \R[\x]: 
p(\x) = \sum_{(\eta,\beta)\in\N^{2n}_k} \lambda_{\eta,\beta} \x^\eta(\1-\x)^\beta \ \ \text{ where } \lambda_{\eta\beta}\ge 0\right\}, 
\end{equation}
i.e., all polynomials that admit a Handelman representation of degree at most $k$ in terms of the polynomials $x_i, 1-x_i$ defining the hypercube $[0,1]^n$.

 Observe that any term  $\x^\eta(\1-\x)^\beta$ with degree $|\eta+\beta| <k$ also belongs to the set $\HH_k$.
 This follows  by iteratively applying  the identity: $1=x_i + (1-x_i)$, which permits
to rewrite  $\x^\eta(\1-\x)^\beta$  as a conic combination of terms $\x^{\eta'}(\1-\x)^{\beta'}$ with degree $|\eta'+\beta'|=k$.
 The next claim follows then as  a direct application.

\begin{lemma}  \label{setHk}
  We have the inclusion: $\HH_k\subseteq \HH_{k+1}$ for all $k$.
\end{lemma}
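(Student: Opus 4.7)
The plan is to read the lemma as a one-line consequence of the identity $1 = x_i + (1-x_i)$, exactly as suggested by the paragraph preceding the statement. Since every element of $\HH_k$ is a nonnegative combination of the basic monomials $\x^\eta(\1-\x)^\beta$ with $|\eta+\beta|=k$, it suffices to show that each such basic monomial belongs to $\HH_{k+1}$; conic combinations of elements of $\HH_{k+1}$ remain in $\HH_{k+1}$.

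Concretely, fix any $(\eta,\beta)\in\N^{2n}_k$ and pick any index $i\in[n]$ (for definiteness $i=1$). Multiplying by $1 = x_i + (1-x_i)$ gives
\[
\x^\eta(\1-\x)^\beta \;=\; x_i\,\x^\eta(\1-\x)^\beta \,+\, (1-x_i)\,\x^\eta(\1-\x)^\beta \;=\; \x^{\eta+e_i}(\1-\x)^\beta + \x^\eta(\1-\x)^{\beta+e_i},
\]
where $e_i$ is the $i$-th standard unit vector. Both summands on the right are basic monomials of total degree $|\eta+\beta|+1 = k+1$, and they appear with coefficient $1$, so the left-hand side is a nonnegative combination of degree-$(k+1)$ basic monomials. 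Hence $\x^\eta(\1-\x)^\beta\in\HH_{k+1}$, which is exactly what was needed.

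Taking any $p=\sum_{(\eta,\beta)\in\N^{2n}_k}\lambda_{\eta,\beta}\x^\eta(\1-\x)^\beta\in\HH_k$ with $\lambda_{\eta,\beta}\geq 0$ and substituting the above identity for each basic monomial expresses $p$ as a nonnegative combination of the basic monomials of total degree exactly $k+1$, proving $p\in\HH_{k+1}$. There is essentially no obstacle here: the only thing one must be careful about is that we need the representation to use terms of total degree \emph{exactly} $k$ (resp.\ $k+1$), which is why the splitting trick via $1=x_i+(1-x_i)$ is the right move rather than simply appealing to the inclusion of polynomial cones.
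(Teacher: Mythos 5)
Your proof is correct and follows exactly the route the paper intends: the paper justifies the lemma as a ``direct application'' of the splitting identity $1 = x_i + (1-x_i)$, and your argument is simply a careful spelling-out of that one-step split, including the relevant subtlety that $\N^{2n}_k$ requires total degree \emph{exactly} $k$.
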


We may now interpret the new upper bounds $f^H_k$ from \eqref{maxcutformula}  in an analogous way as the bounds $f^{sos}_k$ from  \eqref{th-eq1}, but where the SOS density function $\sigma \in \Sigma_k[\x]$ is now replaced
by a density $\sigma \in \HH_k$.

{For clarity we first repeat the definition \eqref{maxcutformula} of the  parameters $f^H_k$ below:
\begin{equation*}
f^H_k\,:=\,\displaystyle\min_{(\eta,\beta)\in\N^{2n}_k}\:\sum_{\alpha\in\N^{n}}f_{\alpha}\,\frac{\gamma_{(\eta+\alpha,\beta)}}{\gamma_{(\eta,\beta)}},
\end{equation*}
where  the scalars
\[\gamma_{(\eta,\beta)}\,=\, \int_{\mathcal K} \x^\eta(\mathbf 1-\x)^\beta d\x = \,\int_\K x_1^{\eta_1}\cdots x_n^{\eta_n}
(1-x_1)^{\beta_1}\cdots (1-x_n)^{\beta_n}\,dx_1 \cdots dx_n,\quad (\eta,\beta)\in\N^{2n},\]
denote the moments of the Lebesgue measure on the set $\mathcal K$.
Using the fact that
$$\sum_{\alpha\in\N^{n}}f_{\alpha} \gamma_{(\eta+\alpha,\beta)}
= \sum_{\alpha\in\N^{n}}f_{\alpha} \int_{\mathcal K} \x^{\eta+\alpha} (\mathbf 1 -\x)^\beta d\x
=\int_{\mathcal K} f(\x) \x^\eta (\mathbf 1 -\x)^\beta d\x,$$
we can rewrite the parameter $f^H_k$ as in \eqref{eqfHk}:
\begin{equation*}
f^H_k=\,\displaystyle\min_{(\eta,\beta)\in\N^{2n}_k}\:\frac{\displaystyle\int_\K f(\x)\,\x^\eta(\1-\x)^\beta\,d\x}
{\displaystyle\int_\K \x^\eta(\1-\x)^\beta\,d\x}.
\end{equation*}
We now give yet another reformulation for the parameter $f^H_k$, where we optimize over density functions in the set $\mathcal H_k$, which turn out to be convex combinations of density functions of the form
$\x^\eta(\mathbf 1-\x)^\beta$ (after suitable scaling).}

\begin{lemma}
\label{lemma:f^H reformulation}
Let $\K\subseteq [0,1]^n$, let $f$ be a polynomial, and
consider the parameters $f^H_k$, $k\in\N$, from \eqref{maxcutformula}. Then one has:
\[
f^H_k = \,\inf_{\sigma\in\HH_k\,}\,\left\{\displaystyle\int_\K f(\x)\,\sigma(\x)\,d\x:\:
\displaystyle\int_\K \sigma(\x)\,d\x=1\right\}\quad \text{ for all } k\in \N,
\]
and the sequence $(f^H_k)_k$ is monotonically non-increasing: $f^H_{k+1}\le f^H_k$.
\end{lemma}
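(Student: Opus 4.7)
The plan is to establish the equivalent formulation first and then derive monotonicity as an easy consequence of the set inclusion $\HH_k\subseteq\HH_{k+1}$ from Lemma~\ref{setHk}.

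For the equivalence, I will use the fact that every $\sigma\in\HH_k$ has by definition the form $\sigma(\x)=\sum_{(\eta,\beta)\in\N^{2n}_k}\lambda_{\eta,\beta}\,\x^\eta(\1-\x)^\beta$ with $\lambda_{\eta,\beta}\ge 0$. One direction is immediate: choosing $\sigma=\x^\eta(\1-\x)^\beta/\gamma_{(\eta,\beta)}$ for any fixed $(\eta,\beta)\in\N^{2n}_k$ gives a feasible density in $\HH_k$, so the infimum in the Lemma's formula is bounded above by $\int_\K f\x^\eta(\1-\x)^\beta d\x/\gamma_{(\eta,\beta)}$ for every $(\eta,\beta)$, hence by the definition in \eqref{maxcutformula}.

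For the reverse inequality, given an arbitrary $\sigma\in\HH_k$ with $\int_\K\sigma d\x=1$, I introduce the reparameterization $t_{\eta,\beta}:=\lambda_{\eta,\beta}\gamma_{(\eta,\beta)}\ge 0$; the normalization condition $\int_\K\sigma d\x=1$ then becomes $\sum_{(\eta,\beta)\in\N^{2n}_k} t_{\eta,\beta}=1$, i.e.\ the $t_{\eta,\beta}$ form a convex combination. Using linearity of the integral and recognizing the ratio appearing in \eqref{maxcutformula}, I can then write
\[
\int_\K f(\x)\sigma(\x)\,d\x
=\sum_{(\eta,\beta)\in\N^{2n}_k} t_{\eta,\beta}\cdot\frac{\int_\K f(\x)\x^\eta(\1-\x)^\beta d\x}{\gamma_{(\eta,\beta)}}
\ge\min_{(\eta,\beta)\in\N^{2n}_k}\frac{\int_\K f(\x)\x^\eta(\1-\x)^\beta d\x}{\gamma_{(\eta,\beta)}},
\]
which is exactly $f^H_k$ by \eqref{eqfHk}. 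Taking the infimum over $\sigma$ on the left gives the desired inequality.

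The monotonicity $f^H_{k+1}\le f^H_k$ then follows directly: by Lemma~\ref{setHk} any feasible density $\sigma\in\HH_k$ with $\int_\K\sigma d\x=1$ is also a feasible density in $\HH_{k+1}$, so the infimum in the new formulation can only decrease when $k$ increases. I do not anticipate any real obstacle here; the only subtle point is the rescaling $t_{\eta,\beta}=\lambda_{\eta,\beta}\gamma_{(\eta,\beta)}$, which uses that each $\gamma_{(\eta,\beta)}>0$ (valid since $\K\subseteq[0,1]^n$ has nonempty interior and the integrand $\x^\eta(\1-\x)^\beta$ is nonnegative and strictly positive on an open subset of $\K$); one may wish to note this implicit positivity assumption on $\K$ or handle degenerate sets separately.
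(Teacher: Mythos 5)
Your proof is correct and takes essentially the same route as the paper: expanding $\sigma\in\HH_k$ in its Handelman coefficients and rescaling via $t_{\eta,\beta}=\lambda_{\eta,\beta}\gamma_{(\eta,\beta)}$ turns the problem into a linear program over a simplex whose optimum is attained at a vertex (a single monomial density), which is exactly the paper's argument, merely presented as two inequalities rather than a chain of equalities, with monotonicity deduced from $\HH_k\subseteq\HH_{k+1}$ in both cases. Your closing remark that one needs $\gamma_{(\eta,\beta)}>0$ (i.e., $\K$ of positive Lebesgue measure) is a valid point that the paper leaves tacit, since the definition of $f^H_k$ already presupposes it.
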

\begin{proof}
Note that, for given $k \in \N$,
\begin{eqnarray*}
&&\inf_{\sigma}\,\left\{\displaystyle\int_\K f(\x)\,\sigma(\x)\,d\x:\:
\displaystyle\int_\K \sigma(\x)\,d\x=1, \; \sigma\in\HH_k\,\right\} \\
&=& \inf_{\lambda\geq0}\left\{\sum_{\alpha\in\N^n_d} f_\alpha\left(
\sum_{(\eta,\beta)\in\N^{2n}_k}\lambda_{\eta\beta}\,
\underbrace{\int_\K \x^{\eta+\alpha}(\1-\x)^{\beta}\,d\x}_{\gamma_{(\eta+\alpha,\beta)}}\right):
\sum_{(\eta,\beta)\in\N^{2n}_k}\lambda_{\eta\beta}
\underbrace{\int_\K \x^{\eta}(\1-\x)^{\beta}\,d\x}_{\gamma_{(\eta,\beta)}} =1\right\} \\
&=&\displaystyle\inf_{\lambda\geq 0}
\left\{
\displaystyle\sum_{(\eta,\beta)\in\N^{2n}_k}\lambda_{\eta\beta}\,\left(\displaystyle\sum_{\alpha\in\N^n_d}f_\alpha\,\gamma_{(\eta+\alpha,\beta)}\right):\quad
\displaystyle\sum_{(\eta,\beta)\in\N^{2n}_k}\lambda_{\eta\beta}\,\gamma_{(\eta,\beta)}=1\right\}\\
&=&\displaystyle\min_{(\eta,\beta)\in\N^{2n}_k}\:\sum_{\alpha\in\N^{n}_d}f_{\alpha}\,\frac{\gamma_{(\eta+\alpha,\beta)}}{\gamma_{(\eta,\beta)}} = f^H_k,
\end{eqnarray*}
where we have used the fact that the penultimate optimization problem is an LP over a simplex that attains its infimum at one of the vertices.
The monotonicity of the sequence $(f^H_k)_{k\in\N}$ now follows from Lemma \ref{setHk}.
\end{proof}

\subsection{Calculating moments on $\K$}
For $\K\subseteq [0,1]^n$  a compact set and for every $(\eta,\beta)\in\N^{2n}$, we need to calculate the parameters
\begin{equation}\label{gene-moments}
\gamma_{(\eta,\beta)}\,:=\,\int_\K \x^\eta (\1-\x)^\beta\,d\x,\end{equation}
in order to compute $f^H_k$.
When $\K$ is arbitrary one does not know how to compute such
generalized moments. But if $\K$ is the unit hypercube $[0,1]^n$, the simplex
$\Delta:=\{\x: \x\geq0;\,\sum_{i=1}^nx_i\leq1\}$,
a Euclidean ball (or sphere),
and/or their image by a linear mapping,
then such moments are available in closed-form; see e.g.\ \cite{lass-siopt-10}.
{We give the moments for the hypercube $\K=[0,1]^n$, which we will treat in detail in this paper. Namely,}
\[\int_{[0,1]^n} \x^\eta\,(\1-\x)^\beta\,d\x\,=\,
\prod_{i=1}^n\displaystyle\left(\displaystyle
{\int_0^1 x_i^{\eta_i}(1-x_i)^{\beta_i}\,dx_i}\right),\quad \text{ for any } (\eta,\beta)\in\N^{2n},\]
and the univariate integrals
 may be calculated from
\begin{equation}\label{eqmoment}
\int_0^1 t^{i}(1-t)^{j}\,dt = {i!j! \over(i+j+1)!}, \quad\quad\text{ for any }  i,j \in \mathbb{N}.
\end{equation}

\subsection{The complexity of computing $f_{k}^H$ and $f_{k}^{sos}$}
\label{sec:computing procedure}
We let $N_f$ denote the set of indices  $\alpha\in \N^n$ for which $f_\alpha\ne 0$;
note that $|N_f| \le {n+d \choose d}$ if $d$ is the total degree of $f$.
The computation of
$f_{k}^H$ is done by computing the summations:
\[
\sum_{\alpha\in N_f}f_{\alpha}\,\frac{\gamma_{(\eta+\alpha,\beta)}}{\gamma_{(\eta,\beta)}}
\]
for all $(\eta,\beta) \in \N^{2n}_k$, and taking the minimum one.
(We assume that the values $\gamma_{(\eta,\beta)}$ are pre-computed for all $(\eta,\beta) \in \N^{2n}_{k+d}$.)

Thus, for fixed $(\eta,\beta) \in \N^{2n}_k$, one may first compute the inner product of the vectors with components $f_\alpha$  and $\gamma_{(\eta+\alpha,\beta)}$ (indexed by $\alpha$).
Note that these vectors are of size $|N_f|$. Since there are ${2n + k -1 \choose k}$ pairs $(\eta,\beta) \in \N^{2n}_k$, the entire computation requires
$(2|N_f|+1){2n + k -1\choose k}$ flops\footnote{We define floating point operations (flops) as in \cite[p. 18]{Golub_VanLoan};
in particular, by this definition the inner product of two $n$-vectors requires $2n$ flops.}.

As explained  in \cite{lass-siopt-10}, 
the computation of the upper bounds $f_{k}^{sos}$ may be done by finding the smallest  generalized eigenvalue $\lambda$ of the system:
\[
Ax = \lambda Bx \quad \quad\quad (x \neq 0),
\]
for suitable symmetric matrices $A$ and $B$ of order ${n + k \choose k}$. In particular, the rows and columns of the two matrices are indexed by $\N^{n}_{\le k}$,
and
\[
A_{\alpha, \beta} = \sum_{\delta \in N_f} f_\delta \int_{\K} \x^{\alpha + \beta + \delta} d\x, \quad B_{\alpha, \beta} = \int_{\K} \x^{\alpha + \beta} d\x \quad \alpha, \beta \in \N^{n}_{\le k}.
\]
Note that the
matrices $A$ and $B$ depend on the moments of the Lebesgue measure on $\K$, 
and that these moments may be computed beforehand, by assumption.
One may compute $A_{\alpha, \beta}$ by taking the inner product of $(f_\delta)_{\delta \in N_f}$ with the vector of moments $\left(\int_{\K} \x^{\alpha + \beta + \delta} d\x\right)_{\delta \in N_f}$.
Thus computation of the elements of $A$ require a total of $|N_f|\left({n + k \choose k} + 1\right)^2$ flops.

Also note that the matrix $B$ is a positive definite (Gram) matrix. Thus one has to solve a so-called symmetric-definite generalized eigenvalue problem, and this may be done
in $14{n + k \choose k}^3$ flops; see e.g.\ \cite[Section 8.7.2]{Golub_VanLoan}. Thus one may compute $f_{k}^{sos}$ in at most
  $14{n + k \choose k}^3+|N_f|\left({n + k \choose k} + 1\right)^2$ flops.

\subsection{An illustrating example}
We give an example to illustrate the behaviour of the bounds $f^{sos}_k$ and $f^H_k$. More examples will be given in  Section \ref{sec:numerical results}.
\begin{example}
\label{ex:1}
As an example we consider the bivariate
Styblinski-Tang function
$$f(x_1,x_2)=\sum_{i=1}^2 {1\over 2}(10x_i-5)^4-8(10x_i-5)^2+{5\over 2}(10x_i-5)$$
over the square $\K =[0,1]^2$, with minimum  $f_{\min, \K} \approx -78.33198$ and minimizer $$\x^* \approx (0.20906466,0.20906466).$$
Using a SOS density function,  the upper bound of degree 2 is $f_1^{sos} = -12.9249$, and the
corresponding optimal SOS density of degree $2$ is (roughly)
\[
\sigma(x_1,x_2)= (1.9169-1.005x_1-1.005x_2)^2.
\]
Using a Handelman-type density function, the upper bound of degree  $2$ is $f^H_2 = -17.3810$, with
corresponding optimal density
\begin{eqnarray*}
\sigma(x_1,x_2)=6x_2(1-x_2).
\end{eqnarray*}
On the other hand, if we consider densities of degree $6$ then we get $f_3^{sos} = -34.403$ and $f^H_6 = -31.429$.

Thus there is no general ordering between the bounds $f_k^{sos}$ and $f_{2k}^H$. Having said that, we will show in Section \ref{sec:numerical results}
that, for most of the examples we have considered, one has  $f_k^{sos} \le f_{2k}^H$ for all $k$, as one may expect from the relative computational efforts.
\begin{figure}[h!]
\begin{center}
\includegraphics[width=0.48\textwidth]{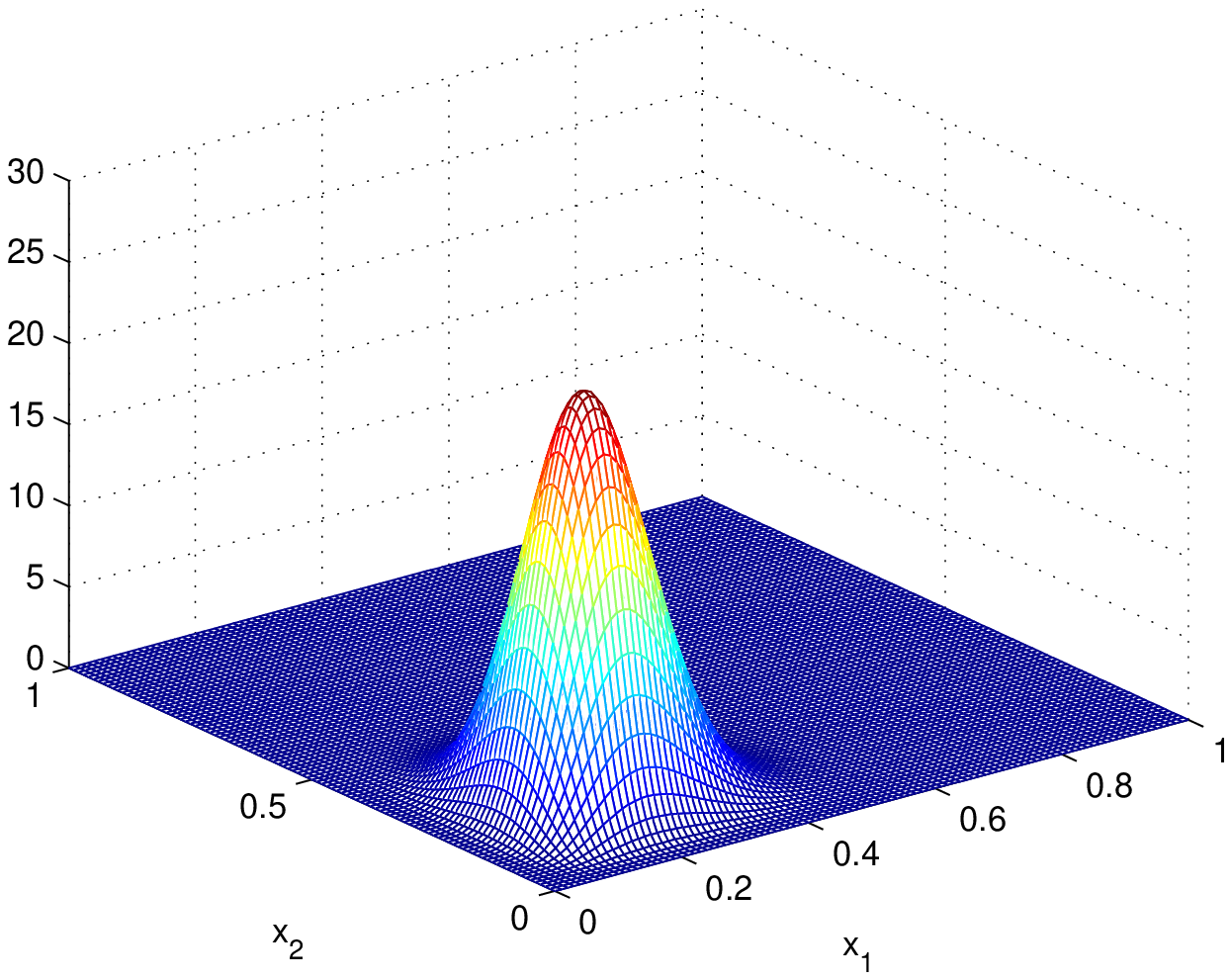} \;\;\;
\includegraphics[width=0.48\textwidth]{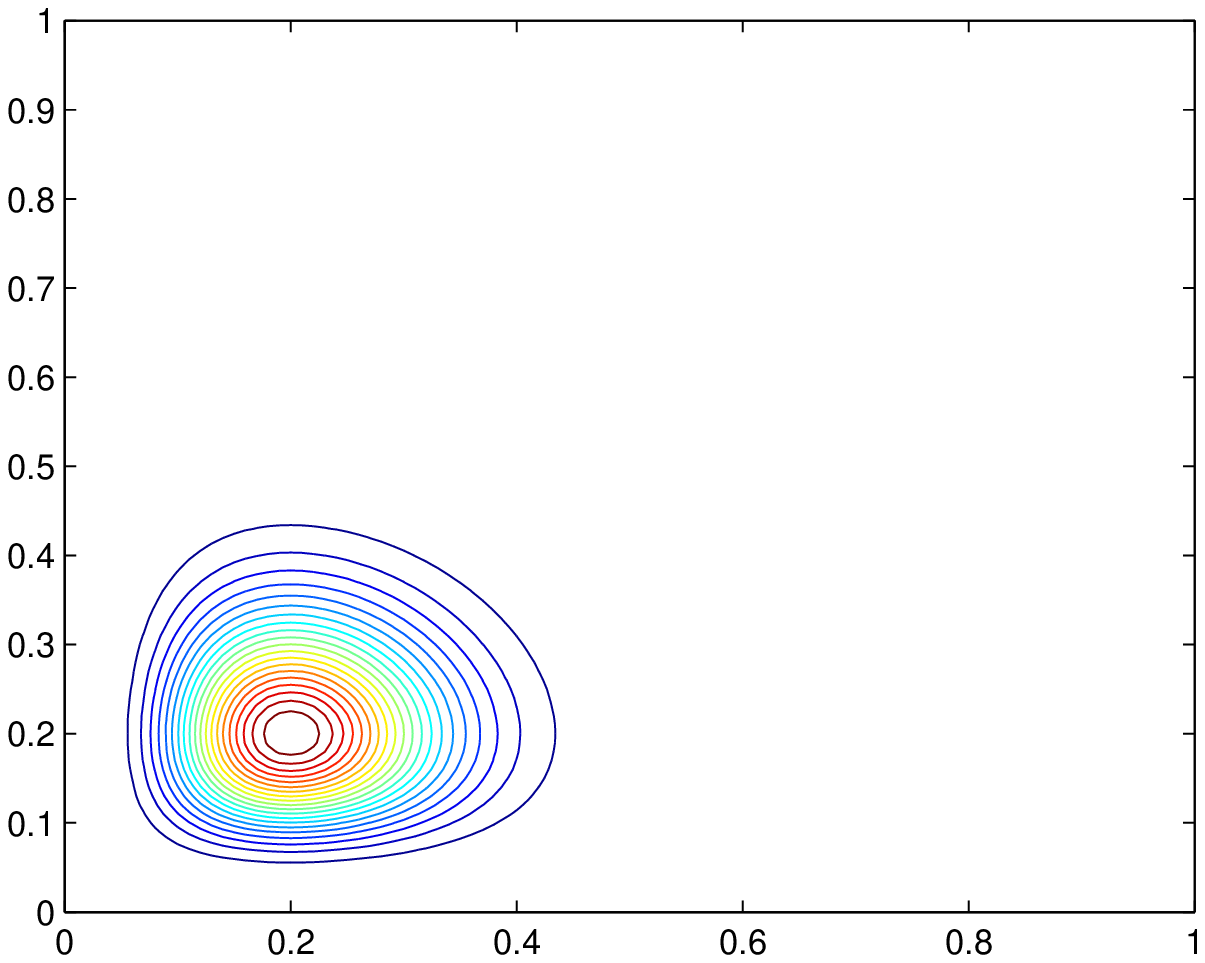}
  \caption{\label{figuresos}Optimal Handelman-type density $\sigma$ of degree $50$ on $[0,1]^2$ for the bivariate
Styblinski-Tang function.}
\end{center}
\end{figure}
As a final illustration, Figure \ref{figuresos} shows the plot and contour plot of the Handelman-type density corresponding to the bound
$f_{50}^H=-60.536$ (i.e.\ degree $50$).

The figure illustrates the earlier assertion that the optimal density approximates the Dirac delta
measure at the minimizer $\x^* \approx (0.20906466,0.20906466)$. Indeed, it is clear from the contour plot that
the mode of the optimal density is close to $\x^*$.
\end{example}

\section{Convergence proof for the  bounds  $f^H_k$ on $\K \subseteq [0,1]^n$}\label{secconv}

In this section we prove the convergence of the sequence
  $({f^H_k})_{k\in \N}$ to the minimum of $f$ over any  compact 
set  $\K \subseteq [0,1]^n$.

\begin{theorem}
\label{th-main}
Let $\K\subseteq [0,1]^n$, let $f\in\R[\x]_d$ and let $\gamma_{(\eta,\beta)}$ be as in (\ref{gene-moments}). Define as in (\ref{maxcutformula})  the parameters
\begin{equation}
\label{main-1}
f^H_k\,=\,\displaystyle\min_{(\eta,\beta)\in\N^{2n}_k}\:\sum_{\alpha\in\N^{n}_{\le d}}f_{\alpha}\,\frac{\gamma_{(\eta+\alpha,\beta)}}{\gamma_{(\eta,\beta)}},\qquad \forall\,k\in\N.
\end{equation}
Then,  $f_{\min,\K}=\displaystyle\lim_{k\to\infty}f^H_k$.
\end{theorem}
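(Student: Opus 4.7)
The strategy is to prove the two inequalities $f^H_k \ge f_{\min,\K}$ and $\limsup_{k\to\infty} f^H_k \le f_{\min,\K}$ separately.

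First, $f^H_k \ge f_{\min,\K}$ is essentially immediate from Lemma~\ref{lemma:f^H reformulation}: each admissible $\sigma \in \HH_k$ with $\int_\K \sigma\,d\x = 1$ defines a probability measure on $\K$, against which $\int_\K f\,\sigma\,d\x \ge f_{\min,\K}$; taking the minimum over such $\sigma$ (equivalently over $(\eta,\beta) \in \N^{2n}_k$) yields the lower bound.

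For the matching upper bound I would exhibit densities in $\HH_k$ that concentrate near a minimizer of $f$ on $\K$. Fix $\epsilon > 0$ and a global minimizer $\x^\star \in \K$. Using continuity of $f$ and the fact that the problem is only non-degenerate when $\K$ has positive Lebesgue measure, one may replace $\x^\star$ by a nearby point in $(0,1)^n$ that is a Lebesgue density point of $\K$, at the cost of enlarging $f(\x^\star)$ by at most $\epsilon$. For each $k$ choose $(\eta^{(k)},\beta^{(k)}) \in \N^{2n}_k$ so that
\[
\frac{\eta^{(k)}_i + 1}{\eta^{(k)}_i + \beta^{(k)}_i + 2} \longrightarrow x^\star_i \quad\text{and}\quad \eta^{(k)}_i + \beta^{(k)}_i \longrightarrow \infty \qquad (i \in [n]);
\]
concretely, $\eta^{(k)}_i \approx k x^\star_i / n$, $\beta^{(k)}_i \approx k(1-x^\star_i)/n$ with a small rounding correction to enforce total degree $k$. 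The density $\sigma_k(\x) \propto \x^{\eta^{(k)}}(\1-\x)^{\beta^{(k)}}$, normalized over $[0,1]^n$, is then a product of univariate $\mathrm{Beta}$ laws whose $i$-th marginal has mean tending to $x^\star_i$ and variance of order $1/k$. Chebyshev's inequality gives weak convergence on $[0,1]^n$ of this product measure to $\delta_{\x^\star}$. Restricting to $\K$ and renormalizing preserves this weak limit, since the density-point property of $\x^\star$ keeps $\int_\K \sigma_k\,d\x$ bounded below by a positive constant times $\int_{[0,1]^n}\sigma_k\,d\x$, and continuity of $f$ yields
\[
\frac{\int_\K f(\x)\sigma_k(\x)\,d\x}{\int_\K \sigma_k(\x)\,d\x} \longrightarrow f(\x^\star) \le f_{\min,\K} + \epsilon.
\]
The left-hand side is an admissible value in the variational formulation of $f^H_k$ from Lemma~\ref{lemma:f^H reformulation}, hence $\limsup_k f^H_k \le f_{\min,\K} + \epsilon$; letting $\epsilon \to 0$ closes the argument.

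The main obstacle is the measure-theoretic step of transferring the concentration from $[0,1]^n$ to the subset $\K$, which is why one needs the minimizer (or an $\epsilon$-close point) to be a Lebesgue density point of $\K$. This is automatic for the ``simple sets'' of practical interest---hypercube, simplex, Euclidean ball, polytope with nonempty interior, and affine images thereof---where every boundary point is approximable by interior points; once this is arranged, the remaining work is a classical variance estimate for product Beta densities together with weak-convergence bookkeeping.
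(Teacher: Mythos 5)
Your proof takes a genuinely different route from the paper's. The paper argues softly: combining the monotonicity of $(f^H_k)_k$ (Lemma~\ref{lemma:f^H reformulation}) with Lasserre's convergence result for SOS densities (Theorem~\ref{th-lasserre}) and Handelman's theorem (Theorem~\ref{thm:Handelman}), it takes a near-optimal SOS density $\sigma$, perturbs it to $\hat\sigma=\sigma+\epsilon$, which is strictly positive on $[0,1]^n$ and hence lies in $\HH_{j_k}$ for some degree $j_k$, and closes with a short normalization estimate giving $f^H_{j_k}\le f_{\min,\K}+C\epsilon$. That argument imports all the analytic work from two black boxes and gives no control over the degree $j_k$. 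Your construction is direct and self-contained---it uses neither Theorem~\ref{th-lasserre} nor Handelman---and is essentially the beta-concentration argument the paper itself deploys later, in Section~\ref{secerror}, to obtain the $O(1/\sqrt k)$ and $O(1/k)$ rates on $\K=[0,1]^n$ (there with shape parameters built from the minimizer via Dirichlet approximation). So your route is quantitative in spirit, would yield rates on the cube as a by-product, and your use of Lebesgue density points to pass to a general $\K\subseteq[0,1]^n$ handles exactly the restriction issue that Section~\ref{secerror} sidesteps by taking $\K$ equal to the whole cube.

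Two steps need attention before the proof is complete. First, the transfer to $\K$ is asserted rather than proved: the density-point property controls \emph{Lebesgue} measure of $B(\y,r)\setminus\K$ at scale $r\to0$, whereas you need a lower bound on the \emph{beta-product} measure $\mu_k(\K)$, and the variance/Chebyshev estimate alone does not give it. What closes the gap is a sup bound of order $k^{n/2}$ on the normalized product density (the maximum of a $beta(a_i,b_i)$ density with $a_i+b_i\asymp k/n$ and mean bounded away from $\{0,1\}$ is $O(\sqrt k)$, by a Stirling-type estimate): then $\mu_k\bigl(B(\y,C/\sqrt k)\setminus\K\bigr)\le O(k^{n/2})\cdot\varepsilon\,\bigl|B(\y,C/\sqrt k)\bigr|=O(\varepsilon)$, while Chebyshev gives $\mu_k\bigl(B(\y,C/\sqrt k)\bigr)\ge 1/2$ for $C$ large, so $\mu_k(\K)$ is bounded below and your weak-convergence bookkeeping goes through. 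Second, replacing $\x^\star$ by a nearby density point of $\K$ with $f$-value within $\epsilon$ requires that every neighborhood of $\x^\star$ meet $\K$ in positive Lebesgue measure, i.e., that the Lebesgue measure on $\K$ have support all of $\K$. This hypothesis is genuinely needed: for $\K=\{0\}\cup[1/2,1]\subseteq[0,1]$ and $f(x)=x$ one has $f^H_k\ge 1/2>0=f_{\min,\K}$, so the theorem as literally stated fails without it. The paper's proof carries the same hypothesis implicitly inside Theorem~\ref{th-lasserre} (whose proof in the cited reference assumes the reference measure has support $\K$); your argument at least makes it visible, and it does hold for the ``simple sets'' the paper targets.
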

\begin{proof}
As in (\ref{eqfsosk}), let $f_k^{sos}$ denote the bound obtained by searching over an SOS density $\sigma$ of degree at most $2k$:
\[
f_k^{sos}=\min \int_{\K} f(\x) \sigma(\x)d\x\  \text{ such that }  \int_{\K} \sigma(\x)d\x=1, \ \sigma \in \Sigma[\x]_k.
\]
Also recall from Lemma \ref{lemma:f^H reformulation} that
\[
 f^H_k= \min \int_{\K} f(\x) \sigma(\x)d\x \ \ \text{such that }  \int_{\K} \sigma(\x)d\x=1, \ \sigma \in \HH_k.
\]
By Lemma \ref{lemma:f^H reformulation}, the sequence $(f^H_k)$ is monotone non-increasing, with  $f_{\min,\K}\le   f^H_k$ for all $k$.
Hence it has a limit which is at least $f_{\min,\K}$, we now show that the limit is equal to $f_{\min,\K}$.

To this end, let $\epsilon >0$. As the sequence $(f^{sos}_k)$ converges to $f_{\min,\K}$ (Theorem \ref{th-lasserre}),  there exists an integer $k$ such that
$$f_{\min,\K}\le f^{sos}_k\le f_{\min,\K}+\epsilon.$$
Next, there exists a polynomial  $\sigma\in \Sigma_k$  such that $\int_{\K}\sigma(\x)d\x=1$ and
$$f^{sos}_k\le \int_{\K} f(\x)\sigma(\x) d\x \le f^{sos}_k+\epsilon.$$
Define now the polynomial $\hat\sigma(\x)=\sigma(\x)+\epsilon$. Then $\hat\sigma$ is strictly positive on $[0,1]^n$ and thus, by Theorem \ref{thm:Handelman} applied to the hypercube $[0,1]^n$,
 $\hat\sigma\in \HH_{j_k}$ for some integer $j_k$.
Observe  that $$\int_{\K} \hat\sigma(\x)d\x =\int_{\K}(\sigma(\x)+\epsilon)d\x \ge \int_{\K} \sigma(\x)d\x=1.$$
Hence we obtain:
$$ f^{H}_{j_k}-f_{\min,\K} \le {\int_{\K} f(\x) \hat\sigma(\x)d\x \over \int_{\K} \hat\sigma(\x)d\x} -f_{\min,\K}
=  {\int_{\K} (f(\x)-f_{\min,\K}) \hat\sigma(\x)d\x \over \int_{\K} \hat\sigma(\x)d\x}
\le \int_{\K} (f(\x)-f_{\min,\K})\hat\sigma(\x)d\x.$$
The right most term is equal to
$$
  \int_{\K} (f(\x)-f_{\min,\K})\sigma(\x) d\x+  \epsilon \int_{\K} (f(\x)-f_{\min,\K})d\x
  = \int_{\K} f(\x)\sigma(\x) d\x -f_{\min,\K}  +\epsilon \int_{\K} (f(\x)-f_{\min,\K})d\x,
 $$
where we used the fact that $\int_{\K}\sigma(\x) d\x =1$.
Finally, combining with the fact that $\int_{\K} f(\x)\sigma(\x)d\x\le f_{k}^{sos}+\epsilon \le f_{\min,\K}+2\epsilon$, we can derive that
$$ f^H_{j_k}-f_{\min,\K} \le \epsilon \left(2+\int_{\K} (f(\x)-f_{\min,\K})d\x\right)=\epsilon C,$$
where
$C:=2+\int_{\K} (f(\x)-f_{\min,\K})d\x$  is a constant. This concludes the proof.
\end{proof}
Note that, in the proof, it was essential to have $\hat\sigma$ strictly positive on all of $[0,1]^n$, for the application of Handelman's theorem.
The fact that $\hat\sigma(\x)=\sigma(\x)+\epsilon$ with $\sigma$ SOS and $\epsilon > 0$ guaranteed this strict positivity.

\section{Bounding the rate of convergence for the  parameters  $f^H_k$ on $\K = [0,1]^n$}\label{secerror}

In this section we analyze the rate of convergence  of the bounds  $f^H_{k}$ for  the hypercube $\K=[0,1]^n$.
We prove a convergence rate in $O(1/\sqrt k)$ for the range $f^H_{k}-f_{\min,\K}$ in general, and a stronger convergence rate
in $ O(1/k)$ when $f$ has a rational global minimizer in $[0,1]^n$, which is the case, for instance, when $f$ is quadratic.

Our main tool will be exploiting some properties of the moments $\gamma_{(\eta,\beta)}$ which, as we recall below,
 arise from the moments of the beta distribution.

\subsection{Properties of the beta distribution}\label{secbeta}
\noindent
By definition, a random variable $X \in [0,1]$ has the beta distribution with shape parameters
$a >0$ and $b>0$, which is denoted by $X\sim beta(a,b)$, if its probability density function is given by
\begin{equation*}\label{density}
y\mapsto \frac{y^{a-1}(1-y)^{b-1}}{\int_{0}^{1}t^{a-1}(1-t)^{b-1}dt}.
\end{equation*}
If $a>1$ and $b>1$, then the (unique) mode of the distribution (i.e.,\ the maximizer of the density function) is
\begin{equation}
\label{eq:mode}
y =  (a-1)/(a+b-2).
\end{equation}
Moreover, the $k$-th moment of $X$ is given by
\begin{equation}\label{kmoment}
\mathbb{E}(X^k)={a(a+1)\cdots(a+k-1) \over (a+b)(a+b+1)\cdots(a+b+k-1)}, \quad (k = 1,2,3,\ldots)
\end{equation}
(see, e.g, \cite[Chapter 24]{BL09}; this also follows using (\ref{eqmoment})).

\medskip
In what follows we will consider families of
 random random variables with the beta distribution of the form
$X  \sim beta(ar,br)$, where $a$ and $b$ are positive real numbers and $r \ge 1$ is an integer.
By (\ref{kmoment}), any such  random variable has mean
\[
\mathbb{E}(X) = \frac{ar}{ar+br} = \frac{a}{a+b}.
\]

In Lemma \ref{lemmomdif0} below we show how the moments of such random variables relate to powers of the mean.
The proof relies on the following technical lemma.


\begin{lemma}\label{lemapprox}
Let  $k$ be a positive integer.
There exists a constant $C_k>0$ (depending only on $k$) for which the following relation holds:
\begin{equation}\label{eqCk}
{ rp(rp+1)\cdots (rp+k-1)\over rq(rq+1) \cdots (rq+k-1)} -{p^k\over q^k} \le {C_k\over r}
\end{equation}
for all integers  $r\ge 1$, and real numbers $0< p<q$.
\end{lemma}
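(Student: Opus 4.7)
The plan is to apply a telescoping product identity after writing the left-hand ratio as a product of increments. Set
$$\alpha_i \;:=\; \frac{rp+i}{rq+i} \qquad (i = 0,1,\ldots,k-1),$$
so that $\alpha_0 = p/q$ and the left-hand side of (\ref{eqCk}) is exactly $\prod_{i=0}^{k-1}\alpha_i - \alpha_0^{k}$. The starting point is the identity
$$\prod_{i=0}^{k-1}\alpha_i \;-\; \alpha_0^{\,k} \;=\; \sum_{j=0}^{k-1}\Bigl(\prod_{i=0}^{j-1}\alpha_i\Bigr)(\alpha_j - \alpha_0)\,\alpha_0^{\,k-1-j},$$
which is obtained by replacing $\alpha_0$ by $\alpha_j$ one factor at a time from left to right and summing the resulting consecutive differences. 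This reduces the problem to estimating the increments $\alpha_j - \alpha_0$ together with their coefficients.

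A direct calculation gives $\alpha_j - \alpha_0 = j(q-p)/\bigl(q(rq+j)\bigr) \ge 0$, so all the increments are nonnegative, the sequence $\alpha_i$ is nondecreasing in $i$, and $\alpha_i \in [\alpha_0,1]\subseteq (0,1]$ for every $i$ (using $p<q$). Consequently every prefix product $\prod_{i<j}\alpha_i$ and every power $\alpha_0^{k-1-j}$ appearing in the telescoping identity is at most $1$, so each summand is bounded above by the increment $\alpha_j - \alpha_0$ itself. Using the estimate $rq+j\ge rq$ together with $(q-p)/q \le 1$ and $\sum_{j=0}^{k-1} j = k(k-1)/2$, one obtains
$$\prod_{i=0}^{k-1}\alpha_i - \alpha_0^{\,k} \;\le\; \sum_{j=0}^{k-1}\frac{j(q-p)}{q(rq+j)} \;\le\; \frac{k(k-1)(q-p)}{2\,r\,q^{2}}.$$

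The main obstacle is packaging the residual factor $(q-p)/q^2$ into a constant depending only on $k$. Since $(q-p)/q^2 \le 1/q$, this is immediate under the scaling $q \ge 1$, which is the regime arising in the paper's intended use of this lemma (where $p,q$ correspond after normalization to positive-integer beta shape parameters coming from a rational minimizer, so that $p = a$ and $q = a+b$ with $a,b$ positive integers). In that regime one may take $C_k = k(k-1)/2$ and the bound follows. Apart from this final packaging of $p,q$-dependence the argument is a routine product-rule telescoping; the key ideas are the product decomposition together with the pointwise estimate $\alpha_i \le 1$, which is what keeps the coefficients in the telescoping identity under control.
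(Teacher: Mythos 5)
Your proof is correct in exactly the regime where the paper's own proof is correct, and it takes a genuinely different route. The paper expands the numerator of the difference using the polynomial $\phi(t)=(t+1)\cdots(t+k-1)=\sum_{i}a_it^i$, cancels the leading terms, and then compares the resulting coefficient sum against $\phi(rq)$ term by term, producing the non-explicit constant $C_k=\max_{0\le i\le k-2}a_i(k-1-i)/a_{i+1}$. You instead factor the left-hand ratio as $\prod_{i=0}^{k-1}\alpha_i$ with $\alpha_i=(rp+i)/(rq+i)$ and telescope; this is the same identity \eqref{eqxy} that the paper uses later in the proof of Lemma \ref{lemma:fbound}, but which it does not exploit here. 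Your computation $\alpha_j-\alpha_0=j(q-p)/\bigl(q(rq+j)\bigr)$ together with $\alpha_i\in(0,1]$ is correct and immediately yields the bound $\frac{k(k-1)(q-p)}{2rq^2}$, which is shorter to derive, comes with the explicit constant $C_k=k(k-1)/2$, and retains the factor $(q-p)/q^2$ that the paper's argument discards; that extra factor is genuinely more information than \eqref{eqCk} itself records.

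Your caveat about needing $q\ge1$ at the final step is not a defect of your argument but of the lemma statement: as stated, for all reals $0<p<q$, the lemma is actually false. For $k=2$ and $p=q/2$ the left-hand side of \eqref{eqCk} equals exactly $\frac{1}{4(rq+1)}$; taking $q=1/r^2$ this is at least $1/8$ for every $r\ge1$, while $C_2/r\to0$, so no constant depending only on $k$ can work. The paper's own proof tacitly makes the same assumption: the step ``$p^i\le q^{i+1}$'' fails whenever $q<1$ (e.g.\ $p=0.5$, $q=0.6$, $i=1$ gives $0.5>0.36$). In every actual invocation of the lemma (through Lemma \ref{lemmomdif0} with $p=a$, $q=a+b$ and the shape parameters of Definitions \ref{defbetaparaQ} and \ref{defbetaparaNQ}) one has $q\ge1$ --- though note that in the endpoint cases (i), (ii), (iv), (v) one gets $a$ or $b$ equal to $1/r$ rather than a positive integer as you asserted, with still $q=1+1/r>1$ --- so your restricted version suffices for all uses in the paper, and your proof stands on equal footing with the published one while being more elementary and more explicit.
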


\begin{proof}
Consider the univariate polynomial $\phi(t)=(t+1)\cdots (t+k-1) =\sum_{i=0}^{k-1}a_it^i$, where the scalars $a_i>0$ depend only on $k$ and $a_{k-1}=1$.
Denote by $\Delta$ the left hand side in (\ref{eqCk}),
which can be written as $\Delta=N/D$, where we set
$$N:= rpq^k \phi(rp)-rqp^k\phi(rq),\ \ \ D:= rq^{k+1} \phi(rq).$$
We first work out the term $N$:
$$N= rpq \left(\sum_{i=0}^{k-2} a_ir^ip^iq^{k-1} -\sum_{i=0}^{k-2}a_ir^iq^ip^{k-1}\right)
= rpq \sum_{i=0}^{k-2} a_ir^i p^iq^i(q^{k-1-i}-p^{k-1-i}).$$
Write: $q^{k-1-i}-p^{k-1-i} =(q-p)\sum_{j=0}^{k-2-i} q^jp^{k-2-i-j}\le (q-p)q^{k-2-i}(k-1-i)$, where we use the fact that $p<q$.
This implies:
$$N\le  rpq (q-p) \sum_{i=0}^{k-2}a_ir^ip^iq^{k-2} (k-1-i) = rpq^{k-1}(q-p) \sum_{i=0}^{k-2} a_i(k-1-i) r^ip^i=:N'.$$
Thus we get:
$$\Delta \le {N'\over D}= {p(q-p)\over q^2} \cdot {\sum_{i=0}^{k-2}a_i(k-1-i) r^i p^i\over\phi(rq)}.$$
The first factor is at most 1, since  one has: $p(q-p)\le q^2$, as $q^2-p(q-p)=(q-p)^2+pq$.
Second, we bound the sum $\sum_{i=0}^{k-2} a_i(k-1-i) r^i p^i$ in terms of $\phi(rq)= \sum_{j=0}^{k-1}a_jr^j q^j.$
Namely, define the constant
$$C_k:=\max_{0\le i\le k-2} {a_i(k-1-i)\over a_{i+1}},$$
which depends only on $k$. We show that
$$a_i (k-1-i)r^ip^i \le {C_k \over r}.$$
Indeed, for each $0\le i\le k-2$,  using $p^i \le q^{i+1}$ and the definition of $C_k$, we get:
$$r \cdot a_i(k-1-i)r^ip^i \le a_i (k-1-i)r^{i+1}q^{i+1} \le C_k a_{i+1}r^{i+1}q^{i+1}.$$
Summing over $i=0,1,\ldots,k-2$ gives:
$$r\sum_{i=0}^{k-2} a_i(k-1-i)r^ip^i\le C_k \sum_{i=0}^{k-2} a_{i+1}r^{i+1}q^{i+1} \le C_k \phi(rq),$$
and thus
$$\Delta \le {N'\over D}\le {C_k\over r}$$
as desired.
\end{proof}

\begin{lemma}\label{lemmomdif0}
For any integer $k\ge1$, there exists a constant $C'_k>0$ (depending only on $k$) for which the following holds:
\begin{equation*}
\left|\mathbb{E}(X^k)-(\mathbb{E}(X))^k\right| \le {C'_k\over r},
\end{equation*}
for all integers $r\ge 1$, real numbers $a,b>0$, and where $X \sim beta(ar,br)$.
\end{lemma}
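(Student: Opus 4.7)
The plan is to observe that this lemma is essentially an immediate corollary of the preceding technical Lemma \ref{lemapprox}, so the proof amounts to identifying the right substitution and verifying the sign.

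First, I would use the closed-form expression for the moments of the beta distribution given in equation (\ref{kmoment}). Since $X \sim \text{beta}(ar, br)$, one has
\[
\mathbb{E}(X^k) \,=\, \frac{ar(ar+1)\cdots(ar+k-1)}{(a+b)r\bigl((a+b)r+1\bigr)\cdots\bigl((a+b)r+k-1\bigr)},
\]
while $\mathbb{E}(X) = a/(a+b)$ and hence $(\mathbb{E}(X))^k = a^k/(a+b)^k$. Thus the quantity $\mathbb{E}(X^k) - (\mathbb{E}(X))^k$ is precisely the left-hand side of (\ref{eqCk}) with the substitution $p := a$ and $q := a+b$. Since $b > 0$ we have $0 < p < q$, so Lemma \ref{lemapprox} applies directly and yields
\[
\mathbb{E}(X^k) - (\mathbb{E}(X))^k \,\le\, \frac{C_k}{r}.
\]

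To pass from this one-sided bound to the absolute value, I would invoke Jensen's inequality: the function $t \mapsto t^k$ is convex on $[0,1]$ for every integer $k \ge 1$, so $\mathbb{E}(X^k) \ge (\mathbb{E}(X))^k$. Consequently the difference is already nonnegative, the absolute value is superfluous, and the conclusion holds with $C'_k := C_k$. (For $k=1$ the claim is trivial since both sides are zero.)

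The main conceptual point is simply recognizing the correct specialization of Lemma \ref{lemapprox}; there is no real obstacle, since the technical heart of the argument---bounding the difference of the rising factorial ratio and the ratio of pure powers---has already been done. The only subtlety worth a line of commentary is the need for Jensen's inequality to convert the one-sided estimate (\ref{eqCk}) into the two-sided estimate demanded by the lemma statement.
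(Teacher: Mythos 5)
Your proposal matches the paper's proof exactly: the paper's argument is the same one-line specialization of Lemma \ref{lemapprox} with $p=a$, $q=a+b$ via the moment formula \eqref{kmoment}. Your Jensen's inequality remark correctly supplies the nonnegativity of $\mathbb{E}(X^k)-(\mathbb{E}(X))^k$ needed for the absolute value, a point the paper's terse proof leaves implicit (it can also be seen directly since each factor satisfies $(rp+i)/(rq+i)\ge p/q$).
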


\begin{proof}
Directly using (\ref{kmoment}),  $\mathbb E(X)={a\over a+b}$,  and Lemma \ref{lemapprox} applied to $p=a$ and $q=a+b$.
\end{proof}

Now we consider i.i.d.\ random variables $X_1,\ldots,X_n$ such that
\begin{equation}
\label{beta iid}
X_i  \sim beta(a_ir,b_ir) \quad a_i,b_i > 0 \; (i \in [n]), \; r \ge 1, r \in \mathbb{N},
\end{equation}
and denote $X = (X_1,\ldots,X_n)$.
For given $\alpha \in \mathbb{N}^n$, we denote $X^\alpha = \prod_{i=1}^n X_i^{\alpha_i}$.
Since the random variables $X_i$ are independent we have $\mathbb{E}(X^\alpha) = \prod_{i=1}^n \mathbb{E}(X_i^{\alpha_i})$ and, for a polynomial $f=\sum f_\alpha \x^\alpha$,   the
 expected value of $f(X)=\sum_{\alpha\in\oN^n}f_{\alpha}X^{\alpha}$ is given by
\begin{equation}\label{expf}
\mathbb{E}(f(X))=\sum_{\alpha\in\oN^n}f_{\alpha}\mathbb{E}(X^{\alpha})=\sum_{\alpha\in\oN^n}f_{\alpha}\prod_{i=1}^n\mathbb{E}(X_i^{\alpha_i}).
\end{equation}
Recall that  the explicit value of $\mathbb{E}(X_i^{\alpha_i})$ is given by \eqref{kmoment}.
The next result relates $\mathbb{E}(f(X))$ (the expected value of $f(X)$) and $f(\mathbb E(X))$  (the value of $f$ evaluated at the mean of $X$).

\begin{lemma}
\label{lemma:fbound}
Let $f(\x)=\sum_{\alpha\in\oN^n}f_{\alpha}\x^{\alpha}$ and $X = (X_1,\ldots,X_n)$, where the i.i.d.\ random variables $X_i$ $(i \in [n])$ are
as in \eqref{beta iid}. Then there is a constant $\hat C_f>0$ (depending on $f$ only) such that
\[
|\mathbb{E}(f(X))-f\left( \mathbb{E}(X)  \right)|\le \frac{\hat C_f}{r}.
\]
\end{lemma}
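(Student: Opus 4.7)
The plan is to reduce the multivariate statement to the univariate estimate of Lemma \ref{lemmomdif0} via independence and a telescoping product identity. Throughout I would write $m_i := \mathbb{E}(X_i) = a_i/(a_i+b_i) \in [0,1]$ and $M_i^k := \mathbb{E}(X_i^k) \in [0,1]$; note that $0 \le m_i, M_i^k \le 1$ because $X_i$ is supported on $[0,1]$.

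First I would handle a single monomial. By independence of the $X_i$,
\[
\mathbb{E}(X^\alpha) - (\mathbb{E}(X))^\alpha \;=\; \prod_{i=1}^n M_i^{\alpha_i} \;-\; \prod_{i=1}^n m_i^{\alpha_i}.
\]
To bound such a difference of products, I would use the standard telescoping identity
\[
\prod_{i=1}^n u_i - \prod_{i=1}^n v_i \;=\; \sum_{i=1}^n \Bigl(\prod_{j<i} v_j\Bigr)(u_i - v_i)\Bigl(\prod_{j>i} u_j\Bigr),
\]
which, since $|M_i^{\alpha_i}|, |m_i^{\alpha_i}| \le 1$, yields
\[
|\mathbb{E}(X^\alpha) - (\mathbb{E}(X))^\alpha| \;\le\; \sum_{i=1}^n \bigl|M_i^{\alpha_i} - m_i^{\alpha_i}\bigr|.
\]
Now each univariate difference is exactly what Lemma \ref{lemmomdif0} controls: $|M_i^{\alpha_i} - m_i^{\alpha_i}| \le C'_{\alpha_i}/r$. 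Hence
\[
|\mathbb{E}(X^\alpha) - (\mathbb{E}(X))^\alpha| \;\le\; \frac{1}{r}\sum_{i=1}^n C'_{\alpha_i}.
\]

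Next I would combine the monomial estimates by linearity of expectation applied to $f(\x) = \sum_{\alpha} f_\alpha \x^\alpha$:
\[
|\mathbb{E}(f(X)) - f(\mathbb{E}(X))| \;\le\; \sum_{\alpha} |f_\alpha|\cdot |\mathbb{E}(X^\alpha) - (\mathbb{E}(X))^\alpha| \;\le\; \frac{1}{r}\sum_{\alpha}|f_\alpha|\sum_{i=1}^n C'_{\alpha_i}.
\]
Since $f$ is a fixed polynomial, only finitely many $f_\alpha$ are nonzero and the degrees $\alpha_i$ are bounded, so the right-hand side sum is a finite constant depending only on $f$. Defining
\[
\hat C_f \;:=\; \sum_{\alpha}|f_\alpha|\sum_{i=1}^n C'_{\alpha_i}
\]
finishes the proof.

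The only minor subtlety — and the one spot to be careful — is making sure the telescoping bound really gives a $1/r$ decay, which requires that the factors $m_i^{\alpha_i}$ and $M_i^{\alpha_i}$ be uniformly bounded (here by $1$, thanks to $\K\subseteq[0,1]^n$); otherwise constants growing with $r$ could sneak in. Everything else is routine manipulation and an appeal to Lemma \ref{lemmomdif0}.
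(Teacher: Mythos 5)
Your proposal is correct and follows essentially the same route as the paper's proof: the same telescoping product identity, the same use of $\mathbb{E}(X_i),\mathbb{E}(X_i^{\alpha_i})\in[0,1]$ to bound each factor, the same appeal to Lemma \ref{lemmomdif0} for the univariate differences, and even the same constant $\hat C_f=\sum_{\alpha}|f_\alpha|\sum_{i=1}^n C'_{\alpha_i}$. Nothing is missing; your closing remark about needing the factors uniformly bounded is exactly the point the paper relies on as well.
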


\begin{proof}
 We have
\begin{eqnarray*}
\mathbb{E}(f(X))-f(\mathbb{E}(X))
=\sum_{\alpha\in\oN^n}f_{\alpha}{\left(\prod_{i=1}^n\mathbb{E}(X_i^{\alpha_i})-\prod_{i=1}^n(\mathbb{E}(X_i))^{\alpha_i}\right)}.
\end{eqnarray*}

\noindent
By the identity:
\begin{equation}\label{eqxy}
\prod_{i=1}^n x_i -\prod_{i=1}^n y_i=\sum_{i=1}^n \left[(x_i-y_i)\prod_{j=1}^{i-1}y_j \prod_{j=i+1}^{n}x_j\right] \quad \quad (\x,\y \in \mathbb{R}^n),
\end{equation}
one has
\begin{eqnarray*}
\prod_{i=1}^n\mathbb{E}(X_i^{\alpha_i})-\prod_{i=1}^n(\mathbb{E}(X_i))^{\alpha_i}=\sum_{i=1}^n \left( \left(\mathbb{E}(X_i^{\alpha_i})-(\mathbb{E}(X_i))^{\alpha_i}\right){\prod_{j=1}^{i-1}(\mathbb{E}(X_j))^{\alpha_j}} {\prod_{j=i+1}^{n}\mathbb{E}(X_j^{\alpha_j})}\right).
\end{eqnarray*}

\smallskip\noindent
Since $\mathbb{E}(X_i)\in[0,1]$  and $\mathbb{E}(X_i^{\alpha_i})\in [0,1]$ for any $i\in[n]$, it follows that
\begin{eqnarray*}
|\mathbb{E}(f(X))-f(\mathbb{E}(X))| &\le& \sum_{\alpha\in\oN^n}\left|f_{\alpha}\right| \sum_{i=1}^n \left|\mathbb{E}(X_i^{\alpha_i})-(\mathbb{E}(X_i))^{\alpha_i}\right| \\
                                   &\le & \sum_{\alpha\in\oN^n}\left|f_{\alpha}\right| \sum_{i=1}^n {C'_{\alpha_i} \over r},
\end{eqnarray*}
where the second inequality is from Lemma \ref{lemmomdif0},
and the constant $C'_{\alpha_i} > 0$ only depends on $\alpha_i$.
Setting $\hat C_f:=\sum_{\alpha\in\N^n} |f_\alpha| \sum_{i=1}^nC'_{\alpha_i}$
 concludes the proof.
\end{proof}

\subsection{Proof of the convergence rate}

Let $\x^*$ be a global minimizer  of $f$  in $[0,1]^n$.
Our objective is to analyze the rate of convergence of the sequence $(f^H_k-f(\x^*))_k$.
Our strategy is to define  suitable shape parameters $\eta^*_i,\beta^*_i$ from the components $x^*_i$ of the global minimizer $\x^*$
so that, if we choose a vector $X=(X_1,\ldots,X_n)$ of i.i.d. random variables  with $X_i\sim beta(\eta^*_i,\beta^*_i)$, then (roughly)
$\mathbb E(X)\approx \x^*$ and $\mathbb E(f(X))\approx f^H_k$ (so that we can use the result of Lemma \ref{lemma:fbound} to estimate $f^H_k-f(\x^*)$.

\medskip
In a first step we indicate how to define the shape parameters $\eta^*_i, \beta^*_i$. For any given integer $r\ge 1$ we will select them of the form $\eta^*_i=ra_i$, $\beta^*_i=rb_i$, where $a_i,b_i$  are constructed from the coordinates of $\x^*$.
As we want $\eta^*_i,\beta^*_i$  to be integer valued  we need to discuss whether a coordinate $x_i$ is rational or not, and to deal with  irrational coordinates we will use the following result about Diophantine approximations.



\begin{theorem}[Dirichlet's theorem] (See e.g. \cite[Chapter 6.1]{Schrijver})\label{theoDir}
Consider a real number $x\in \R$  and $0<\epsilon\le 1$.
Then there exist integers $p$ and $q$ satisfying
\begin{equation*}\label{eqDio}
\left|x-{p\over q}\right| <{\epsilon\over q} \ \text{ and } \ 1\le q\le {1\over \epsilon}.
\end{equation*}
If $x\in (0,1)$,  then one may moreover assume  $0\le p\le q$.
\end{theorem}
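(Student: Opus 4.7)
The plan is to prove Dirichlet's theorem via the classical pigeonhole argument of Dirichlet himself. Set $N := \lfloor 1/\epsilon \rfloor$, which is a positive integer since $0 < \epsilon \le 1$; note that $N \le 1/\epsilon < N+1$. Consider the $N+2$ real numbers
\[
0,\ \{x\},\ \{2x\},\ \ldots,\ \{Nx\},\ 1,
\]
all of which lie in the closed interval $[0,1]$ (here $\{\cdot\}$ denotes the fractional part).

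Next, I would partition $[0,1]$ into $N+1$ subintervals each of length $1/(N+1)$. By the pigeonhole principle, two of the $N+2$ listed numbers must lie in a common subinterval and therefore differ in absolute value by at most $1/(N+1)$. A brief case analysis---the pair may consist of two fractional parts $\{k_1 x\}$ and $\{k_2 x\}$ with $0 \le k_1 < k_2 \le N$, or of one endpoint $0$ or $1$ together with some $\{kx\}$---produces an integer $q \in \{1, \ldots, N\}$ (obtained as $k_2 - k_1$ or $k$) and an integer $p$ (a difference or a nearby floor) such that
\[
|qx - p| \le \frac{1}{N+1} < \epsilon,
\]
where the strict right-hand inequality is exactly the relation $N+1 > 1/\epsilon$. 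Dividing by $q$ and combining with $q \le N \le 1/\epsilon$ yields both required inequalities $|x - p/q| < \epsilon/q$ and $1 \le q \le 1/\epsilon$.

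For the refinement when $x \in (0,1)$: since $qx \in (0, q)$ and $|qx - p| < \epsilon \le 1$, the integer $p$ must lie in the open interval $(qx - 1,\, qx + 1) \subset (-1,\, q+1)$, and hence $p \in \{0, 1, \ldots, q\}$.

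The main subtlety---rather than a substantive obstacle---is the choice of placing $N+2$ points into $N+1$ intervals rather than the more familiar $N+1$ points into $N$ intervals. The latter choice would only yield $|qx - p| < 1/N$, and since $N = \lfloor 1/\epsilon \rfloor$ need not exceed $1/\epsilon$, the required bound $|qx - p| < \epsilon$ would fail whenever $1/\epsilon$ is not an integer (e.g.\ $x = 0.33$, $\epsilon = 0.32$). Including both endpoints $0$ and $1$ among the pigeons provides exactly the extra slack $1/(N+1) < \epsilon$ needed, at the modest cost of a short case analysis.
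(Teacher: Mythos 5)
Your proof is correct. Note that the paper itself gives no proof of this statement---it is invoked as a known result with a citation to Schrijver---so the only question is whether your argument stands on its own, and it does: it is the classical Dirichlet pigeonhole argument, executed carefully. In particular, your choice of $N+2$ pigeons ($0$, $\{x\},\dots,\{Nx\}$, $1$) in $N+1$ boxes with $N=\lfloor 1/\epsilon\rfloor$ is exactly what is needed to satisfy \emph{both} constraints simultaneously: it yields $|qx-p|\le \tfrac{1}{N+1}<\epsilon$ (strict, since $1/\epsilon<N+1$) while keeping $q\le N\le 1/\epsilon$, whereas the more familiar variants fail on one side or the other (with $N$ boxes one only gets $|qx-p|<1/N$, which can exceed $\epsilon$ as your example shows; with $N'=\lceil 1/\epsilon\rceil$ one can get $q>1/\epsilon$). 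The case analysis is complete---the pair $\{0,1\}$ cannot share a box since $N+1\ge 2$, every other pairing yields $q\in\{1,\dots,N\}$, and the case of an endpoint paired with $\{kx\}$ is handled by $p=\lfloor kx\rfloor$ or $p=\lfloor kx\rfloor+1$---and the refinement $0\le p\le q$ for $x\in(0,1)$ follows correctly from $|qx-p|<\epsilon\le 1$ and $qx\in(0,q)$, which is precisely how the paper uses the hypothesis $\epsilon\le 1$.
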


\begin{definition}[Shape parameters for rational components]\label{defbetaparaQ}
Fix an integer $r\ge 1$. For rational coordinates $x^*_i\in\mathbb Q$ define $\eta_i^*$, $\beta^*_i$ as follows:
\begin{itemize}
\item[(i)]
If $x^*_i=0$ then set $\eta^*_i=1$ and $\beta^*_i=r$.
\item[(ii)]
If $x_i^*=1$ then set $\eta^*_i=r$ and $\beta^*_i=1$.
\item[(iii)]
If $x^*_i\in \mathbb Q\setminus \{0,1\}$ then write $x^*_i=p_i/q_i$ where $1\le p_i<q_i$ are integers, and  set $\eta^*_i=r p_i$ and $\beta^*_i=r(q_i-p_i)$.\end{itemize}
\end{definition}

\ignore{
If $x_i^*\in (0,1)$ is a rational coordinate of $\x^*$, then we select integers $p_i$ and $q_i$ such that $x_i^*=p_i/q_i$, so that
$1\le p_i<q_i$.
When $x_i^*$ is an irrational coordinate of $\x^*$ we use Theorem \ref{theoDir} to construct a pair of suitable integers $p_i,q_i$.
Namely, we consider an integer $r\ge 1$ and apply Theorem \ref{theoDir} with $\epsilon=1/r$. Then,  there exist  integers $p_i$ and $q_i$ satisfying
\begin{equation}\label{eq2}
\left|x_i^*-{p_i\over q_i}\right| <{1\over rq_i}, \ 0\le p_i\le q_i\le r \  \text{and}\ 1\le q_i.
\end{equation}
For convenience, let $I_0$ (resp., $I_1$, $I$) denote the set of indices $i\in [n]$ for which
$x_i^*$ is irrational and the integers $p_i$ and $q_i$ in (\ref{eq2}) satisfy: $p_i=0$ (resp., $p_i=q_i$, $1\le p_i<q_i$).
Moreover, define the set $J$ consisting
\tcolred{ of all indices $i$ for which $x_i^* \in (0,1)$ is rational. Then, $x^*_i\in \{0,1\}$ for all $i\in [n]\setminus (I_0\cup I_1\cup I \cup J)$.}

We now indicate how to construct the parameters $\eta_i^*$ and $\beta_i^*$.
}

\begin{definition}[Shape parameters for irrational components]\label{defbetaparaNQ}
Fix an  integer $r\ge 1$.
For each irrational coordinate $x^*_i \in \R\setminus \mathbb Q$,  apply Theorem \ref{theoDir} with $\epsilon=1/r$ to obtain integers $p_i,q_i$ satisfying
\begin{equation*}\label{eq2}
\left|x_i^*-{p_i\over q_i}\right| <{1\over rq_i}, \ 0\le p_i\le q_i\le r, \  \text{and}\ 1\le q_i.
\end{equation*}
Define the sets
$I_0=\{i\in [n]: x_i^*\in \R\setminus \mathbb Q, \ p_i=0\},\
I_1=\{i\in [n]: x_i^*\in \R\setminus \mathbb Q,\  p_i=q_i\},$ and
$I=\{i\in [n]: x_i^*\in \R\setminus \mathbb Q, \ 1\le p_i<q_i\}$,
and define $\eta^*_i$, $\beta^*_i$ as follows:
\begin{itemize}
\item[(iv)]
If $i\in I_0$ then set $\eta^*_i=1$ and $\beta^*_i=r$.
\item[(v)]
If $i\in I_1$ then set $\eta^*_i=r$ and $\beta^*_i=1$.
\item[(vi)]
If $i\in I$ then set $\eta^*_i=rp_i$ and $\beta^*_i=r(q_i-p_i)$.
\end{itemize}
\end{definition}

As above consider i.i.d. $X=(X_1,\ldots,X_n)$, where $X_i\sim beta(\eta^*_i,\beta^*_i)$.
Then, by construction, for all $i \in [n]$, one has
\[
\mathbb{E}(X_i) = \frac{\eta_i^*}{\eta_i^*+\beta_i^*} = \left\{
\begin{array}{ll}
 \frac{1}{r+1} & \mbox{in cases (i), (iv),} \\
\frac{r}{r+1}  & \mbox{in cases (ii), (v),} \\
\frac{p_i}{q_i} & \mbox{in cases (iii), (vi).} \\
\end{array}\right.
\]
One can verify that  in all cases one has
\begin{equation}\label{eqXi}
|\mathbb{E}(X_i)- x_i^*| \le 1/r\ \text{ for all } i\in [n].
\end{equation}
Observe morever that, again by construction,
\begin{equation}\label{eqbeta}
\mathbb{E}(f(X)) ={\int_{[0,1]^n} f(\x)\x^{\eta^* - \mathbf{1}} (\1-\x)^{\beta^*-\mathbf{1}}d\x\over \int_{[0,1]^n} \x^{\eta^* - \mathbf{1}} (\1-\x)^{\beta^*-\mathbf{1}}d\x}  \ge f^H_{k_r} \ge f(\x^*)\ ,
\end{equation}
 where we let $\mathbf{1}$ denote the all-ones vector and we define the parameter
\begin{equation}\label{eqkr}
 k_r:=\sum_{i=1}^n (\eta_i^*-1+\beta_i^*-1).
 \end{equation}
We will  use  the following estimate on the parameter $k_r$.

\begin{lemma}\label{lemkr}
Consider  the parameter $k_r=\sum_{i=1}^n (\eta_i^*-1+\beta_i^*-1)$ {and  $J=\{i\in [n]: x^*_i\in\mathbb Q\setminus \{0,1\}\}$.}
Then the following holds:
\begin{itemize}
\item[(a)] If $\x^*\in \mathbb Q^n$ then $k_r\le ar$ for all $r\ge 1$, where $a>0$ is a  constant  (not depending on $r$).
\item[(b)]
If $\x^*\not \in  \mathbb Q^n$ then  $k_r\le a'r^2$ for all $r\ge 1$, where $a'>0$ is a constant (not depending on $r$).
\item[(c)]
For $r=1$, we have that $k_1= \sum_{i\in J} q_i -2|J|$. 
\end{itemize}
\end{lemma}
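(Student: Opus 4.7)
My plan is to split $k_r = \sum_{i=1}^n (\eta_i^* - 1 + \beta_i^* - 1)$ into per-coordinate contributions and handle each of the six cases of Definitions \ref{defbetaparaQ} and \ref{defbetaparaNQ} separately. A direct inspection of the definitions shows that in cases (i), (ii), (iv), (v) one has $\eta_i^* + \beta_i^* = r+1$, so coordinate $i$ contributes $r-1$ to $k_r$; while in cases (iii) and (vi) one has $\eta_i^* + \beta_i^* = r q_i$, so coordinate $i$ contributes $r q_i - 2$.

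For part (a), if $\x^* \in \mathbb Q^n$ then every coordinate falls into case (i), (ii), or (iii). Setting $Q := \max(\{q_i : i \in J\} \cup \{1\})$, a constant depending only on $\x^*$, each coordinate contributes at most $\max(r-1,\, r q_i - 2) \le r Q$, so summing over $i$ gives $k_r \le n Q r$, and we may take $a := nQ$. For part (c), specializing to $r=1$, the contributions from cases (i), (ii), (iv), (v) all equal $r-1 = 0$; moreover Dirichlet's theorem applied with $\epsilon = 1/r = 1$ forces $1 \le q_i \le 1$, so irrational coordinates can only land in cases (iv) or (v), never in (vi). Hence only indices $i \in J$ contribute, each giving $q_i - 2$, which yields $k_1 = \sum_{i \in J} q_i - 2|J|$.

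For part (b), when $\x^* \notin \mathbb Q^n$ the rational coordinates still contribute at most $r Q$ each as in (a), while the irrational coordinates in cases (iv), (v) contribute $r - 1$, and those in case (vi) satisfy $q_i \le r$ by Dirichlet, hence contribute at most $r q_i - 2 \le r^2 - 2$. Summing gives $k_r \le n(r^2 + Q r) \le a' r^2$ for an appropriate constant $a'$ depending on $n$ and $Q$. The argument is essentially bookkeeping: there is no real obstacle, and the one mildly delicate observation is that at $r = 1$ Dirichlet collapses all irrational coordinates into cases (iv) or (v), which is precisely what makes statement (c) come out as a clean equality rather than an inequality.
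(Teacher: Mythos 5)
Your proof is correct and follows essentially the same route as the paper: both split $k_r$ into per-coordinate contributions ($\eta_i^*+\beta_i^*-2 = rq_i-2$ in cases (iii), (vi) and $r-1$ otherwise) and invoke Dirichlet's bound $q_i\le r$ for irrational coordinates, the only cosmetic difference being that the paper computes $k_r$ exactly as $ar-b$ while you use cruder but valid per-coordinate bounds. Your explicit observation that $\epsilon=1$ forces $q_i=1$, hence $I=\emptyset$ at $r=1$, is a point the paper's proof of part (c) states without justification, so your write-up is if anything slightly more complete.
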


\begin{proof}
By construction, $\eta_i^*+\beta_i^*-2= rq_i-2$ for each $i\in I\cup J$, and $\eta_i^*+\beta_i^*-2=r-1$ otherwise.
From this one gets $k_r= r(\sum_{i\in I\cup J}q_i +n-|I\cup J|) -n-|I\cup J|=: ar-b$, after setting
$b:= n+|I\cup J|$ and
$a:=\sum_{i\in I\cup J}q_i +n-|I\cup J|$, so that $a,b\ge 0$. Thus, $k_r\le ar$ holds.

Next, note that $q_i\le r$ for each $i\in I$, while $q_i$ does not depend on $r$ for $i\in J$ (since then $x_i^*=p_i/q_i$).
Hence, in case  (a),  $I=\emptyset$  and the constant $a$ does not depend on $r$.
In case (b),  we obtain: $a\le r|I| + \sum_{i\in J}q_i +n-|I\cup J| \le a'r$, after setting
$a':= |I|+ \sum_{i\in J}q_i +n-|I\cup J|$, which is thus a constant not depending on $r$.
Then, $k_r\le a r\le a'r^2$.

\tcolred{In  the case   $r=1$,
 the set $I$ is empty and thus $k_1=\sum_{i\in J}q_i -2|J|$, showing (c).}
\end{proof}

\smallskip\noindent
We can now prove the following upper bound for the range $\mathbb{E}(f(X))-f(\x^*)$ (thus also for the range $f^H_{k_r}- f(\x^*)$)
which will be crucial for establishing the rate of convergence of the parameters $f^H_k$.

\begin{theorem}\label{thmdif}
Given a polynomial $f$ of total degree $d$,  consider a global minimizer $\x^*$ of $f$  in $[0,1]^n$.
 Let $r$ be a positive integer. For any $x_i^*\in[0,1]$ ($i\in[n]$), consider the parameters $\eta_i^*,\beta_i^*$ as in Definitions \ref{defbetaparaQ} and \ref{defbetaparaNQ}, and i.i.d. random variables $X_i\sim beta(\eta^*_i,\beta^*_i)$.  Then there exists a constant $C_f>0$ (depending only on $f$)  such that
\begin{eqnarray*}
f^H_{k_r}-f(\x^*)\le \mathbb{E}(f(X))-f(\x^*) \le {C_f\over r},
\end{eqnarray*}
where $k_r$ is as in (\ref{eqkr}).
\end{theorem}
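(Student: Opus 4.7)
The plan is to estimate the range $\mathbb{E}(f(X))-f(\x^*)$ by inserting the intermediate quantity $f(\mathbb{E}(X))$ and applying the triangle inequality
\[
\mathbb{E}(f(X))-f(\x^*) \;\leq\; \bigl|\mathbb{E}(f(X))-f(\mathbb{E}(X))\bigr| + \bigl|f(\mathbb{E}(X))-f(\x^*)\bigr|,
\]
so that each piece can be bounded by a constant (depending only on $f$) divided by $r$.

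The left-hand inequality $f^H_{k_r}-f(\x^*)\leq \mathbb{E}(f(X))-f(\x^*)$ is immediate from the chain (\ref{eqbeta}): the product density of $X$ is proportional to $\x^{\eta^*-\mathbf{1}}(\mathbf{1}-\x)^{\beta^*-\mathbf{1}}$, a monomial of total degree exactly $k_r=\sum_i(\eta_i^*-1+\beta_i^*-1)$. Hence, by the reformulation (\ref{eqfHk}), $\mathbb{E}(f(X))$ is an admissible value in the infimum defining $f^H_{k_r}$, so $\mathbb{E}(f(X))\geq f^H_{k_r}\geq f(\x^*)$.

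For the first piece of the split, I would apply Lemma \ref{lemma:fbound}. In each of the cases (i)--(vi) of Definitions \ref{defbetaparaQ} and \ref{defbetaparaNQ} one can write $\eta_i^*=a_i r$ and $\beta_i^*=b_i r$ for some $a_i,b_i>0$ (for instance $a_i=1/r,b_i=1$ in case (i), and $a_i=p_i,b_i=q_i-p_i$ in case (iii)), so the hypothesis of Lemma \ref{lemma:fbound} is met. The key point to verify is that the constant $\hat{C}_f$ furnished by that lemma depends only on $f$, not on $r$ or on the particular $a_i,b_i$; this is because the underlying estimate from Lemma \ref{lemmomdif0} is uniform in $a,b>0$, its constant $C'_k$ depending only on the exponent $k$, as is already transparent from the proof of Lemma \ref{lemapprox}. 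This yields $|\mathbb{E}(f(X))-f(\mathbb{E}(X))|\leq \hat{C}_f/r$.

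For the second piece I would invoke (\ref{eqXi}), which guarantees $|\mathbb{E}(X_i)-x_i^*|\leq 1/r$ for every $i\in[n]$ by a short case-by-case check, together with the Lipschitz continuity of $f$ on the compact set $[0,1]^n$. Setting $L:=\max_{i\in[n]}\sup_{\x\in[0,1]^n}|\partial_i f(\x)|$ and telescoping the difference $f(\mathbb{E}(X))-f(\x^*)$ along the coordinate-wise segment from $\mathbb{E}(X)$ to $\x^*$ via the one-variable mean value theorem gives
\[
\bigl|f(\mathbb{E}(X))-f(\x^*)\bigr|\;\leq\; L\sum_{i=1}^n|\mathbb{E}(X_i)-x_i^*|\;\leq\; \frac{Ln}{r}.
\]
Combining the two estimates with $C_f:=\hat{C}_f+Ln$ finishes the proof. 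The main obstacle is confirming the $r$-uniformity claimed for Lemma \ref{lemma:fbound} across the six parameter-assignment cases (in particular the degenerate cases where one of $a_i,b_i$ is chosen as $1/r$); once that uniformity is in hand, the rest of the argument is routine.
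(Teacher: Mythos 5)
Your proposal is correct and takes essentially the same route as the paper: the identical decomposition $\mathbb{E}(f(X))-f(\x^*)=\bigl[\mathbb{E}(f(X))-f(\mathbb{E}(X))\bigr]+\bigl[f(\mathbb{E}(X))-f(\x^*)\bigr]$, with the left inequality from (\ref{eqbeta}), the first term from Lemma \ref{lemma:fbound} (whose $r$-uniformity across the parameter cases you rightly verify via the $a,b$-uniform constant in Lemma \ref{lemmomdif0}), and the second term from (\ref{eqXi}). The only cosmetic difference is that the paper bounds $f(\mathbb{E}(X))-f(\x^*)$ by applying the telescoping identity (\ref{eqxy}) to the monomials, giving the constant $d\sum_{\alpha}|f_\alpha|$, whereas you use a mean-value/Lipschitz estimate giving $Ln$; both constants depend only on $f$, so the arguments are interchangeable.
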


\begin{proof}
\noindent
The leftmost inequality follows using (\ref{eqbeta}), we show the rightmost one.
By Lemma \ref{lemma:fbound} one has:
\begin{eqnarray*}
\mathbb{E}(f(X))-f(\x^*) &= & \mathbb{E}(f(X))-f(\mathbb{E}(X)) +  f(\mathbb{E}(X))-f(\x^*) \\
                         &\le& \hat C_f/r + f(\mathbb{E}(X))-f(\x^*),
\end{eqnarray*}
 where $\hat C_f >0$ is a constant that depends on $f$ only. Thus we need only bound $ f(\mathbb{E}(X))-f(\x^*)$.
 To this end, note that
\begin{eqnarray*}
f(\mathbb{E}(X))-f(\x^*)
&=& \sum_{\alpha\in\oN^n}f_{\alpha}{\left(\prod_{i=1}^n\mathbb{E}(X_i)^{\alpha_i}-\prod_{i=1}^n(x^*_i)^{\alpha_i}\right)}.
\end{eqnarray*}
Using again  the identity (\ref{eqxy})
one has
\[
\left|\left(\prod_{i=1}^n\mathbb{E}(X_i)^{\alpha_i}-\prod_{i=1}^n(x^*_i)^{\alpha_i}\right)\right| \le \sum_{i:\alpha_i > 0} |\mathbb{E}(X_i)-x_i^*| \le \frac{d}{r},
\]
where $d$ is the degree of $f$, and we have used $| \mathbb{E}(X_i)-x_i^*| \le 1/r$, $x_i^* \in [0,1]$ and  $\mathbb{E}(X_i) \in [0,1]$ for all $i \in [n]$.
Setting
\[
C_f = \hat C_f + {d\sum_{\alpha\in\oN^n}|f_{\alpha}|}
\]
completes the proof.
\end{proof}

\medskip

Finally we can now show the following  for  the rate of convergence of the sequence $f^H_k$, which is our main result.

\begin{theorem}
\label{cor:convergence rate}
Let $f$ be  a polynomial, 
let $\x^*$ be  a global minimizer  of $f$  in $[0,1]^n$,
and consider as before the parameters
\[f^H_k=\,\min_{(\eta,\beta)\in \N_k^{2n}}\:\frac{\int_{[0,1]^n} f(\x)\,\x^\eta(\1-\x)^\beta\,d\x}
{\int_{[0,1]^n} \x^\eta(\1-\x)^\beta\,d\x} \quad (k = 1,2,\ldots).
\]
\tcolred{There exists a constant $M_f$ (depending only on $f$) such that
\begin{equation}\label{rateNQ}
f^H_k - f(\x^*)  \le {M_f \over \sqrt k} \ \ \text{ for all } k\ge k_1,
\end{equation}
where $k_1=\sum_{i\in J}q_i -2|J|$, 
with $J=\{i\in [n]: x^*_i\in \mathbb Q\setminus \{0,1\}\}$
and $x_i^*=p_i/q_i$ for integers $1\le p_i<q_i$ if $i\in J$.
Moreover,
if $f$ has at least one  rational global minimizer $\x^*$,
then there exists a constant $M'_f$  (depending only on $f$) such that
\begin{equation}\label{rateQQ}
f^H_k - f(\x^*)  \le {M'_f \over  k} \ \ \text{ for all } k\ge k_1.
\end{equation}
}
In particular, the convergence rate is in $O(1/k)$ when $f$ is a quadratic polynomial.
\end{theorem}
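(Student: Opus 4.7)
My strategy is to combine three ingredients already in place: the pointwise bound $f^H_{k_r}-f(\x^*)\le C_f/r$ from Theorem \ref{thmdif}, which holds at the specific indices $k_r=\sum_{i=1}^n(\eta_i^*-1+\beta_i^*-1)$; the growth estimates on $k_r$ in terms of $r$ from Lemma \ref{lemkr}; and the monotonicity of the sequence $(f^H_k)_k$ from Lemma \ref{lemma:f^H reformulation}. For an arbitrary index $k\ge k_1$, I would exhibit a suitable integer $r\ge 1$ with $k_r\le k$, use monotonicity to obtain $f^H_k\le f^H_{k_r}$, and then express $1/r$ in terms of $1/k$ via Lemma \ref{lemkr}.

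For the general bound \eqref{rateNQ}, I would appeal to Lemma \ref{lemkr}(b), which yields $k_r\le a'r^2$ for a constant $a'$ depending only on $\x^*$. Choosing $r=\lfloor\sqrt{k/a'}\rfloor$ then guarantees both $r\ge 1$ and $k_r\le a'r^2\le k$, so monotonicity combined with Theorem \ref{thmdif} gives $f^H_k-f(\x^*)\le C_f/r$. Since $r\ge \sqrt{k/a'}-1\ge \sqrt{k}/(2\sqrt{a'})$ for $k\ge 4a'$, this yields $f^H_k-f(\x^*)\le 2C_f\sqrt{a'}/\sqrt{k}$, and the finitely many remaining values $k_1\le k<4a'$ are handled by the trivial bound $f^H_k-f(\x^*)\le f^H_{k_1}-f(\x^*)\le C_f$ and absorbed into the constant $M_f$. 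The stronger bound \eqref{rateQQ} in the rational-minimizer case is analogous, but uses Lemma \ref{lemkr}(a) together with the linear growth $k_r=ar-b$ read off from the proof of that lemma; choosing $r=\lfloor(k+b)/a\rfloor$ yields $r\gtrsim k/a$ and hence $f^H_k-f(\x^*)\le C_f/r\le M'_f/k$ after absorbing small values of $k$ into the constant.

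Finally, the statement that the rate is $O(1/k)$ for quadratic $f$ reduces to observing that a quadratic polynomial over the box $[0,1]^n$ attains its minimum at a rational point. This is a standard fact from quadratic programming: once the active set of box constraints at the optimum is fixed, the interior KKT equations form a rational linear system and admit a rational solution when the data are rational. Invoking this observation, the hypothesis of \eqref{rateQQ} is satisfied and the $O(1/k)$ rate follows. I expect the only subtlety to be the bookkeeping of the constants $M_f$ and $M'_f$, which must simultaneously cover the leading-order asymptotics and the boundary regime of small $k$; no fundamentally new idea beyond Theorem \ref{thmdif}, Lemma \ref{lemkr}, and monotonicity is needed.
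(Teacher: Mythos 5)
Your proposal is correct and follows essentially the same route as the paper's proof: it combines the monotonicity of $(f^H_k)_k$ from Lemma \ref{lemma:f^H reformulation}, the bound $f^H_{k_r}-f(\x^*)\le C_f/r$ from Theorem \ref{thmdif}, and the growth estimates of Lemma \ref{lemkr}, then converts $1/r$ into $1/\sqrt{k}$ or $1/k$. The only (cosmetic) differences are that you choose $r$ explicitly via floor formulas and absorb finitely many small $k$ into the constant, whereas the paper takes the largest $r$ with $k_r\le k$ and uses $k<k_{r+1}$, and that you sketch the rational-minimizer fact for quadratics via the KKT system on the optimal face instead of citing Vavasis as the paper does.
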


\begin{proof}
Consider   an arbitrary integer $k\ge k_1$. Let $r\ge 1$ be the largest integer for which $k\ge k_r$. Then we have $k_r\le k<k_{r+1}$.
 As $k_r\le k$, we have the inequality
$f^H_k \le f^H_{k_r} $ and thus, by Theorem~\ref{thmdif}, we obtain
$$ f^H_k-f(\x^*)\le f^H_{k_r}-f(\x^*) \le {C_f\over r},$$
where the constant $C_f$ depends only on $f$.

If $\x^*\in \mathbb Q^n$ then, by Lemma \ref{lemkr} (a), $k_{r+1}\le a(r+1)\le 2ar$. This  implies
$k\le k_{r+1} \le 2ar$, where the constant $a$ does not depend on $r$.
Thus, $$f^H_{k}-f(\x^*) \le {C_f\over r} \le {2aC_f\over k} \tcolred{= {M_f\over k}},$$
 \tcolred{where the constant $M_f={2aC_f}$ depends only on $f$. This shows (\ref{rateQQ}).}

If $\x^*\not\in \mathbb Q^n$ then, by Lemma \ref{lemkr} (b), $k_{r+1}\le a'(r+1)^2 \le 4a'r^2$. This  implies
$k\le k_{r+1} \le 4a' r^2$ and thus ${1\over r}\le {2\sqrt {a'}\over \sqrt k}$, where the constant $a'$ does not depend on $r$.
Therefore,  $$f^H_{k}-f(\x^*) \le {C_f\over r} \le {2\sqrt{a'}C_f\over \sqrt k}=  {M'_f\over \sqrt k},$$
where the constant $M'_f= 2\sqrt{a'}C_f$ depends only on $f$. This  shows
(\ref{rateNQ}).

Finally, if  $f$ is quadratic then, by a result of  Vavasis \cite{Vavasis},  $f$ has a rational minimizer over the hypercube and thus the rate of convergence is  $O(1/k)$.
\end{proof}

\tcolred{Note that the inequalities (\ref{rateNQ}) and (\ref{rateQQ}) hold for all $k\ge k_1$, where $k_1$ depends only on the rational components in $(0,1)$ of the minimizer
$\x^*$. The constant $k_1$ can be in $O(1)$, e.g., when all but $O(1)$ of these rational components have a small denominator (say, equal to $2$).
Thus we can, for some problem classes, get a bound with an error estimate in polynomial time.}

\begin{example}\label{extight}
Consider the polynomial $f=\sum_{i=1}^n x_i$ and the set $\K=[0,1]^n$. Then $f_{min,\K}=0$ is attained at $\x^*=0$.
Using
the relations  \eqref{gene-moments}, \eqref{eqmoment} and  \eqref{main-1},  it follows that
$$f_k^H=\min_{(\eta,\beta)\in\N^{2n}_k} \sum_{i=1}^n {\eta_i+1\over \eta_i+\beta_i+2}.$$
Since $\eta_i+\beta_i\le k$ and $\eta_i\ge 0$ (for any $i\in[n]$), we have $f_k^H\ge {n\over k+2}$.

\smallskip
\noindent
By this example, there does not exist any $\delta>0$ such that, for any $f$, $f_k^H-f_{\min,\K}=O(1/k^{1+\delta}).$
Therefore, when a rational minimizer exists, the convergence rate from Theorem \ref{cor:convergence rate} in $O(1/k)$ for $f_k^H$  is tight.
\end{example}

\section{Bounding the rate of convergence for grid search over $\K=[0,1]^n$}\label{secgrid}

As an alternative to computing $f^H_k$ on $\K = Q := [0,1]^n$, one may minimize $f$ over the regular grid:
\[
Q(k):=\{\x\in Q=[0,1]^n \mid k\x\in \N^n\},
\]
i.e.,\ the set of rational points in $[0,1]^n$ with denominator $k$.
Thus we get the upper bound
\[
f_{\min,Q(k)}:=\min _{\x\in Q(k)} f(x) \ge f_{\min,Q} \quad k = 1,2,\ldots
\]
De Klerk and Laurent \cite{KL10} showed  a rate of convergence in $O(1/k)$ for this sequence  of upper bounds:
\begin{equation}\label{eqKL}
f_{\min,Q(k)}-f_{\min,Q} \le  {L(f)\over k} {d+1\choose 3}n^d \ \text{ for any } k\ge d,
\end{equation}
where $d$ is the degree of $f$ and $L(f)$ is the constant
\[
L(f) =\max _\alpha |f_\alpha| {\prod_{i=1}^n \alpha_i ! \over |\alpha|!}.
\]
We can in fact show a stronger convergence rate in $O(1/k^2)$.

{
\begin{theorem}
Let $f$ be a polynomial and let $\x^*$ be a global minimizer of $f$ in $[0,1]^n$.
Then there exists a constant $C_f$ (depending on $f$) such that
\[
f_{\min,Q(k)} -f(\x^*) \le {C_f\over k^2} \ \ \text{ for all } k\ge 1.
\]
\end{theorem}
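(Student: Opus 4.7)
The plan is to construct a grid point $\hat\x \in Q(k)$ within $\ell_\infty$ distance $1/(2k)$ of $\x^*$, chosen in a way that exploits the KKT conditions at $\x^*$, and then apply a second-order Taylor expansion around $\x^*$. Specifically, for each coordinate $i \in [n]$, I would define $\hat x_i$ as follows: if $x_i^* \in \{0,1\}$, set $\hat x_i := x_i^*$ (which already belongs to $Q(k)$); otherwise, set $\hat x_i$ to be a nearest point to $x_i^*$ in the set $\{m/k : 0 \le m \le k\}$, so $|\hat x_i - x_i^*| \le 1/(2k)$. Then $\hat\x \in Q(k)$ and $\|\hat\x - \x^*\|_\infty \le 1/(2k)$.

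Next I would invoke Taylor's theorem with remainder: there exists $\xi$ on the segment between $\x^*$ and $\hat\x$ (hence in $[0,1]^n$) such that
\[
f(\hat\x) = f(\x^*) + \nabla f(\x^*)^\sfT (\hat\x - \x^*) + \tfrac{1}{2}(\hat\x - \x^*)^\sfT \nabla^2 f(\xi)(\hat\x - \x^*).
\]
The first-order term vanishes: for each index $i$, either $x_i^* \in (0,1)$ and KKT at the global minimizer $\x^*$ over $[0,1]^n$ gives $\partial f/\partial x_i(\x^*) = 0$, or $x_i^* \in \{0,1\}$ and our construction enforces $\hat x_i - x_i^* = 0$. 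In either case the $i$-th summand of $\nabla f(\x^*)^\sfT(\hat\x - \x^*)$ is zero.

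For the second-order term, since the entries of $\nabla^2 f$ are polynomials and $[0,1]^n$ is compact, the constant $M_f := \max_{i,j\in[n]}\max_{\x\in [0,1]^n}|\partial^2 f / \partial x_i \partial x_j(\x)|$ is finite. Combined with $|\hat x_i - x_i^*| \le 1/(2k)$ for every $i$, this yields
\[
\bigl|\tfrac{1}{2}(\hat\x - \x^*)^\sfT \nabla^2 f(\xi)(\hat\x - \x^*)\bigr| \le \tfrac{1}{2} \cdot n^2 \cdot \tfrac{1}{4k^2} \cdot M_f = \tfrac{n^2 M_f}{8 k^2}.
\]
Setting $C_f := n^2 M_f / 8$ and using $f_{\min,Q(k)} \le f(\hat\x)$ finishes the proof.

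There is no substantive obstacle here; the argument is a routine second-order Taylor estimate. The one point worth emphasizing is the rounding rule at the boundary: naively rounding to the nearest grid point for a component with $x_i^* \in \{0,1\}$ would still be fine in the second-order part, but the vanishing of the first-order term relies precisely on pairing ``partial derivative equals zero'' (interior coordinates) with ``perturbation equals zero'' (boundary coordinates), so keeping boundary coordinates fixed is essential.
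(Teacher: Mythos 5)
Your proof is correct, and it shares the paper's skeleton: round $\x^*$ to a nearby point of $Q(k)$ and bound the second-order Taylor remainder uniformly over $[0,1]^n$. Where you genuinely diverge is in killing the first-order term when $\x^*$ lies on the boundary. The paper argues in two cases: if $\x^*$ is interior it uses $\nabla f(\x^*)=0$ directly; if not, it restricts $f$ to the face of the cube containing $\x^*$ in its relative interior (freezing the coordinates with $x_i^*\in\{0,1\}$), obtaining a lower-dimensional polynomial $g$ whose minimizer is interior, applies the interior case to $g$, and concludes via $f_{\min,Q(k)}\le g_{\min,Q(k)}$ and $f(\x^*)=g(\x^*_I)$ (grid points of the face extend to grid points of the cube since $0$ and $1$ are grid values). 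You instead run a single unified argument: componentwise first-order optimality gives $\partial f/\partial x_i(\x^*)=0$ whenever $x_i^*\in(0,1)$, while your rounding fixes the coordinates with $x_i^*\in\{0,1\}$, so every summand of $\nabla f(\x^*)^{\sfT}(\hat\x-\x^*)$ vanishes termwise. The two routes are logically equivalent --- your KKT observation is precisely what makes the paper's restricted minimizer interior --- but yours avoids the auxiliary polynomial $g$ and delivers the explicit constant $C_f=n^2M_f/8$ in one pass, whereas the paper's boundary case inherits an unquantified constant from the restriction. One small remark on your closing caveat: it is slightly overstated, since rounding to the \emph{nearest} grid point automatically returns $\hat x_i=x_i^*$ when $x_i^*\in\{0,1\}$ exactly (the nearest element of $\{0,1/k,\dots,1\}$ to $0$ is $0$, and to $1$ is $1$), so your case split coincides with naive nearest-point rounding; the pairing you emphasize would only need explicit enforcement under a rounding rule that is not nearest-point.
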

}

\begin{proof}
{
Fix $k\ge 1$. By looking at the grid point in $Q(k)$ closest to $\x^*$, there exists $\mathbf h\in [0,1]^n$ such that  $\x^*+\mathbf h \in Q(k)$ and
$\|\mathbf h\| \le {\sqrt n\over k}$. Then, by Taylor's theorem, we have that
\begin{equation}\label{eqTaylor}
f(\x^*+\mathbf h)= f(\x^*)+\mathbf h^T\nabla f(\x^*)+{1\over 2}\mathbf h^T\nabla^2 f(\zeta)\mathbf h,
\end{equation}
for some point $\zeta$ lying in the segment $[\x^*,\x^*+\mathbf h]\subseteq [0,1]^n$.
}

{
Assume first that the global minimizer $\x^*$ lies in the interior of $[0,1]^n$. Then $\nabla f(\x^*)=0$ and thus
$$f_{\min,Q(k)} -f(\x^*) \le f(\x^*+\mathbf h)- f(\x^*) \le C\|\mathbf h\|^2 \le  {n C\over k^2},$$
after setting $C:=\max_{\zeta \in [0,1]^n} \|\nabla^2 f(\zeta)\|/2$.
}

{
Assume now that $\x^*$ lies on the boundary of $[0,1]^n$ and let $I_0$ (resp., $I_1$, $I$) denote the set of indices $i\in [n]$ for which $x_i^*=0$ (resp., $x_i^*=1$, $x_i^*\in (0,1)$). Define the polynomial $g(y)=f(y,0,\ldots, 0, 1,\ldots,1)$ (with 0 at the positions $i\in I_0$ and 1 at the positions $i\in I_1$) in the variable $y\in \oR^{|I|}$. Then $\x_I^*=(x_i^*)_{i\in I}$ is a global minimizer of $g$ over $[0,1]^{|I|}$ which lies in the interior.
So we may apply the preceding reasoning to the polynomial $g$ and conclude that
$g_{\min,Q(k)}-g(\x^*_I) \le {C'\over k^2}$ for some constant $C'$ (depending on $g$ and thus on $f$).
As $f_{\min, Q(k)}\le g_{\min, Q(k)}$ and $f(\x^*)=g(\x^*_I)$ the result follows.
}
\end{proof}


{Therefore  the bounds $f_{\min, Q(k)}$ obtained through grid search have a faster convergence rate than the bounds $f^H_k$.
However, for any fixed value of $k$, for  the bound $f^H_k$ one needs a polynomial number $O(n^k)$ of computations (similar to function evaluations), while computing  the bound $f_{\min,Q(k)}$ requires an  exponential number $k^n$ of function evaluations. Hence the `measure-based' guided search producing the bounds $f^H_k$ is superior to  the brute force grid search technique in terms of complexity.}

\section{Obtaining feasible points $\x$ with $f(\x) \le f^H_k$}\label{sec:feasible points}

In this section we describe how to generate a point  {$\x \in \K \subseteq  [0,1]^n$}  such
that $f(\x) \le f^{H}_k$ (or such that $f(\x) \le f^{H}_k+ \epsilon$ for some small $\epsilon > 0$).

We will discuss in turn:
\begin{itemize}
\item
the convex case (and related cases), and
\item
the general case.
\end{itemize}

\subsection{The convex case (and related cases): using the Jensen inequality}\label{secconvex}
Our main tool for treating the convex case (and related cases) will be the Jensen inequality.

\begin{lemma}[Jensen inequality]
If $\mathcal{C} \subseteq \mathbb{R}^n$ is convex, $\phi:\mathcal{C} \rightarrow \mathbb{R}$ is a convex function, and $X \in \mathcal{C}$ a random variable,
then
\[
\phi(\mathbb{E}(X)) \le \mathbb{E}(\phi(X)).
\]
\end{lemma}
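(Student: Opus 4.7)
The plan is to use the standard supporting hyperplane (subgradient) approach. First, I would set $\mu := \mathbb{E}(X)$, and note that since $\mathcal{C}$ is convex and $X$ takes values in $\mathcal{C}$, the mean $\mu$ also lies in $\mathcal{C}$ (formally, because any mean of a random variable supported in a closed convex set stays in that set, by applying separation to any half-space containing $\mathcal{C}$). This places us in a position where we can talk about the behavior of $\phi$ at $\mu$.

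Next, I would invoke the fact that a convex function $\phi$ on a convex subset of $\mathbb{R}^n$ admits a supporting affine minorant at $\mu$: there exists a vector $g \in \mathbb{R}^n$ (a subgradient of $\phi$ at $\mu$) such that
\[
\phi(x) \;\ge\; \phi(\mu) + g^{\sfT}(x-\mu) \qquad \text{for all } x \in \mathcal{C}.
\]
Setting $x = X$ and taking expectations, by linearity of expectation we obtain
\[
\mathbb{E}(\phi(X)) \;\ge\; \phi(\mu) + g^{\sfT}(\mathbb{E}(X)-\mu) \;=\; \phi(\mu) \;=\; \phi(\mathbb{E}(X)),
\]
which is the desired inequality.

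The only real subtlety is the existence of the subgradient $g$. If $\mu$ lies in the relative interior of $\mathcal{C}$ (or, more generally, in the relative interior of the domain of $\phi$), such a subgradient exists by the standard supporting hyperplane theorem for convex functions. If $\mu$ lies on the relative boundary, one can either apply the result to the relative interior of the affine hull of $\mathcal{C}$ (which still contains $\mu$ up to translating to a lower-dimensional setting), or use a limiting argument by approximating $\mu$ from the interior and using lower semicontinuity. In the applications we have in mind (densities supported on $[0,1]^n$ or simple semi-algebraic sets), $\phi$ is a polynomial, hence continuous and finite everywhere, so integrability of $\phi(X)$ and existence of the subgradient at $\mu$ pose no difficulty. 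The step I expect to require the most care, if one insists on full generality, is precisely this existence-of-subgradient issue; for the polynomial setting of the paper it is automatic.
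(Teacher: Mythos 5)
The paper states this lemma without proof: it is invoked as the classical Jensen inequality, so there is no internal argument to compare yours against, and the only question is whether your blind proof is sound. It essentially is --- the supporting-minorant argument you give is the standard textbook proof --- but two of the points you flag deserve sharper treatment. First, your justification that $\mathbb{E}(X)\in\mathcal{C}$ via separating half-spaces directly yields only $\mathbb{E}(X)\in\overline{\mathcal{C}}$; membership in a non-closed convex set needs the finer face-induction argument (if the mean lies on the relative boundary, the supporting-hyperplane equality $\mathbb{E}(a^{\sfT}X)=a^{\sfT}\mathbb{E}(X)$ forces $X$ to lie almost surely in the corresponding face, and one recurses in lower dimension). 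In the paper's applications this is moot, since $\mathcal{C}$ is closed there ($\K$ is assumed closed and convex in Theorem \ref{thm:convex case}, and the second and third cases of that theorem use $\mathcal{C}=[0,1]$). Second, you correctly identify subgradient existence as the genuine subtlety: at a relative boundary point a finite subgradient can fail to exist (e.g.\ $\phi(t)=-\sqrt{t}$ on $[0,1]$ at $t=0$), and your proposed limiting fix is the delicate route, since a convex function need not be lower semicontinuous at the relative boundary of its domain; the face-induction route is the robust one in full generality. For the paper's setting, however, there is a cleaner observation than the one you make: the relevant $\phi$ is a polynomial (the objective $f$, or a monomial $t\mapsto t^{\alpha_i}$), hence differentiable on all of $\mathbb{R}^n$, and for a differentiable function that is convex on $\mathcal{C}$ the gradient inequality $\phi(x)\ge\phi(\mu)+\nabla\phi(\mu)^{\sfT}(x-\mu)$ holds for \emph{every} $\mu,x\in\mathcal{C}$ --- restrict $\phi$ to the segment $[\mu,x]\subseteq\mathcal{C}$ and apply the one-dimensional gradient inequality --- so $g=\nabla\phi(\mu)$ serves as the minorant even when $\mu$ lies on the boundary, and no interiority hypothesis or limiting argument is needed. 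With that substitution your proof is complete for all uses of the lemma in the paper.
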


\begin{theorem}
\label{thm:convex case}
Assume that $\K \subseteq [0,1]^n$ is closed and convex, and  $(\eta,\beta)\in \N^{2n}_k$ is such that
\[
f^H_k = \frac{\int_{\K} f(\x)\,\x^\eta(\1-\x)^\beta\,d\x}
{\int_{\K} \x^\eta(\1-\x)^\beta\,d\x}.
\]
Let $X = (X_1,\ldots,X_n)$ be a vector of random variables with
$X_i \sim beta(\eta_i+1,\beta_i+1)$ ($i \in [n]$).

Then one has $f(\mathbb{E}(X)) \le f^H_k$ in the following cases:
\begin{enumerate}
\item
$f$ is convex;
\item
$f$ has only nonnegative coefficients;
\item
$f$ is square-free, i.e., $f(\x)=\sum_{\alpha\in \{0,1\}^n} f_\alpha \x^\alpha$.
\end{enumerate}
\end{theorem}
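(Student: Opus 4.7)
The plan is, for each of the three cases, to combine a Jensen-type inequality with the identification of $\mathbb{E}(f(X))$ with the ratio of integrals defining $f^H_k$. The starting observation is that, by independence, the joint density of $X=(X_1,\ldots,X_n)$ on $[0,1]^n$ is proportional to $\x^\eta(\1-\x)^\beta$; in particular when $\K=[0,1]^n$, the moment formula \eqref{eqmoment} gives $\mathbb{E}(f(X))=f^H_k$ directly, and more generally one passes from integrals over $[0,1]^n$ to integrals over $\K$ by using the density restricted to $\K$. It then suffices in each case to prove $f(\mathbb{E}(X))\le\mathbb{E}(f(X))$; note that $\mathbb{E}(X_i)=(\eta_i+1)/(\eta_i+\beta_i+2)\in(0,1)$, so $\mathbb{E}(X)\in[0,1]^n$, and the feasibility $\mathbb{E}(X)\in\K$ follows from convexity of $\K$ once $X$ is sampled from the density supported on $\K$.

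Case (1) is immediate: Jensen's inequality applied to the convex function $f$ on the convex set $\K$ gives $f(\mathbb{E}(X))\le\mathbb{E}(f(X))$. For cases (2) and (3), I would proceed monomial by monomial. By independence, $\mathbb{E}(X^\alpha)=\prod_{i=1}^n\mathbb{E}(X_i^{\alpha_i})$; applying the univariate Jensen inequality to the convex map $t\mapsto t^{\alpha_i}$ on $[0,1]$ gives $\mathbb{E}(X_i^{\alpha_i})\ge(\mathbb{E}(X_i))^{\alpha_i}$, and hence $\mathbb{E}(X^\alpha)\ge(\mathbb{E}(X))^\alpha$. In case (2), multiplying this inequality by $f_\alpha\ge 0$ and summing over $\alpha$ yields $f(\mathbb{E}(X))\le\mathbb{E}(f(X))$. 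In case (3), each exponent satisfies $\alpha_i\in\{0,1\}$, so the componentwise step is an equality; this gives $\mathbb{E}(X^\alpha)=(\mathbb{E}(X))^\alpha$, and hence $f(\mathbb{E}(X))=\mathbb{E}(f(X))$ with no sign assumption on the coefficients.

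The step I expect to need a bit of care is the identification $\mathbb{E}(f(X))=f^H_k$ together with the claim $\mathbb{E}(X)\in\K$. The product density of $X$ naturally lives on $[0,1]^n$, so when $\K$ is a proper convex subset of $[0,1]^n$ one must be explicit about the probability measure $X$ is sampled from and reconcile it with the integrals defining $f^H_k$. Once this bookkeeping is settled, the three Jensen-type estimates above dispatch the theorem cleanly, with case (3) actually producing the stronger equality $f(\mathbb{E}(X))=f^H_k$.
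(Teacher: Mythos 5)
Your proposal is correct and takes essentially the same route as the paper: the paper likewise asserts $f^H_k=\mathbb{E}(f(X))$ ``by construction'' and then dispatches case (1) by Jensen applied to $f$, case (2) by the univariate Jensen bound $\mathbb{E}(X_i^{\alpha_i})\ge(\mathbb{E}(X_i))^{\alpha_i}$ combined with independence, and case (3) by noting that $\alpha_i\in\{0,1\}$ turns that bound into an equality. The bookkeeping subtlety you flag for $\K\subsetneq[0,1]^n$ (reconciling the product-of-betas law on $[0,1]^n$ with the integrals over $\K$, which is needed for the independence used in cases (2) and (3)) is genuinely present but is glossed over in the paper's own proof as well, so your treatment is, if anything, slightly more careful on the same argument.
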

\begin{proof}
The proof uses the fact that, by construction,
\[
f^H_k = \mathbb{E}(f(X)).
\]
Thus the first item follows immediately from Jensen's inequality.
For the proof of the second item, recall that
\[
f^H_k = \mathbb{E}(f(X))=\sum_{\alpha\in\oN^n}f_{\alpha}\prod_{i=1}^n\mathbb{E}(X_i^{\alpha_i})
\]
where we now assume $f_\alpha \ge 0$ for all $\alpha$. Since $\phi(X_i) = X_i^{\alpha_i}$ is convex on $[0,1]$ ($i \in [n]$), Jensen's inequality yields
$\mathbb{E}(X_i^{\alpha_i}) \ge [\mathbb{E}(X_i)]^{\alpha_i}$. Thus
\[
f^H_k \ge \sum_{\alpha\in\oN^n}f_{\alpha} \mathbb{E}(X)^\alpha,
\]
as required.
For the third item, where $f$ is assumed square-free, one has
\[
f^H_k = \mathbb{E}(f(X))=\sum_{\alpha\in\oN^n}f_{\alpha}\prod_{i=1}^n\mathbb{E}(X_i^{\alpha_i})
\]
where all $\alpha \in \{0,1\}^n$ so that $\mathbb{E}(X_i^{\alpha_i}) = [\mathbb{E}(X_i)]^{\alpha_i}$, and consequently
\[
f^H_k = \sum_{\alpha\in\oN^n}f_{\alpha} \mathbb{E}(X)^\alpha.
\]
This completes the proof.
\end{proof}

\subsection{The general case}\label{secmode}

\subsubsection*{Sampling}
One may generate  random samples $\x\in \K$  from the
 density $\sigma$ on $\K$ using the well-known {\em method of conditional distributions}
 (see e.g., \cite[Section 8.5.1]{law07}).
 For $\K = [0,1]^n$, the procedure is described in detail in \cite[Section 3]{deklerk}.
 In this way one may obtain, with high probability, a point  $\x \in \K$ with $f(\x) \le f^H_k+ \epsilon$, for any given $\epsilon > 0$. (The size of the sample
 depends on $\epsilon$.) Here we only mention that this procedure may be done in time polynomial in $n$ and $1/\epsilon$; for details the reader is referred to
 \cite[Section 3]{deklerk}.

\subsubsection*{A heuristic based on the mode}
 As an alternative, one may consider the heuristic that returns the mode (i.e.,\ maximizer) of the density function $\sigma$ as a candidate solution.
 By way of illustration, recall that in
  Example \ref{ex:1} the mode was a good approximation of the global minimizer for $\sigma$ of degree $50$; see Figure \ref{figuresos}.
 The mode may be calculated one variable at a time using \eqref{eq:mode}.

 In Section \ref{sec:numerical results} below, we will illustrate the performance of all the strategies described in this section on numerical examples.

\section{Numerical examples}
\label{sec:numerical results}

In this section we will present numerical examples to illustrate the behavior  of the sequences of upper bounds, and of the techniques to obtain feasible
points.

We consider several well-known polynomial test functions from global optimization (also used in \cite{deklerk}), that are listed in Table \ref{tabletest}, where we set
\begin{equation*}
f_{\max,\K} \: :=\,\max\,\{f(\x):\: \x\in\K\,\}.
\end{equation*}

Note that the Booth and Matyas functions are convex. Note also that the functions have a rational minimizer in the hypercube (except the  Styblinski-Tang function).

\ignore{
{\tiny
\begin{center}
\begin{table}[h!]
\caption{Test functions}\label{tabletest}
\label{table:example}
  \begin{tabular}{| m{2.5cm} | m{5cm} |m{5.5cm} | m{1.2cm} |}
    \hline
    Name & Formula & Minimum ($f_{\min,\K}$) & Search domain ($\K$) \\ \hline
    Booth Function & $f=(20x_1+40x_2-37)^2+(40x_1+20x_2-35)^2$ & $f(0.55,0.65)=0$ & $[0,1]^2$ \\ \hline
    Matyas Function & $f=0.26[(20x_1-10)^2+(20x_2-10)^2]-0.48(20x_1-10)(20x_2-10)$ &$f(0.5,0.5)=0$ & $[0,1]^2$ \\ \hline
    Motzkin Polynomial & $f=(4x_1-2)^4(4x_2-2)^2+(4x_1-2)^2(4x_2-2)^4-3(4x_1-2)^2(4x_2-2)^2+1$ & $f({1\over4},{1\over4})=f({1\over4},{3\over4})=f({3\over4},{1\over4})=f({3\over4},{3\over4})=0$ & $[0,1]^2$ \\ \hline
    Three-Hump Camel Function & $f=2(10x_1-5)^2-1.05(10x_1-5)^4+{1\over 6}(10x_1-5)^6+(10x_1-5)(10x_2-5)+(10x_2-5)^2$ &$f(0.5,0.5)=0$ & $[0,1]^2$\\ \hline
    Styblinski-Tang Function  & $f=\sum_{i=1}^n{1\over 2}(10x_i-5)^4-8(10x_i-5)^2+{5\over 2}(10x_i-5)$ & $f(0.20906466,\dots,0.20906466)
    =-39.16599n$& $[0,1]^n$\\ \hline
    Rosenbrock Function & $f=\sum_{i=1}^{n-1}100(4.096x_{i+1}-2.048-(4.096x_i-2.048)^2)^2+(4.096x_i-3.048)^2 $ & $f({3048\over4096},\dots,{3048\over4096})=0$ & $[0,1]^n$ \\
    \hline
  \end{tabular}
  \end{table}
  \end{center}
}

We start by listing the upper bounds $f^H_k$ for these test functions in Table \ref{tab:overview} for densities with degree up to $k=50$.

We performed the computation using Matlab on a Laptop with Intel Core i7 4600U CPU (2.10 GHz) and 8 GB RAM. The generalized eigenvalue problem was solved in Matlab using the {\tt eig} function.

{\tiny
\begin{center}
\begin{table}[h!]
\caption{$f^H_k$ for Booth, Matyas, Motzkin, Three--Hump Camel, Styblinski--Tang and Rosenbrock Functions. \label{tab:overview}}
  \begin{tabular}{|c| c | c | c | m{1.3cm} | m{1.5cm} | m{1.4cm} |m{1.4cm}|m{1.4cm}|}\hline
  $k$&     Booth & Matyas    & Motzkin & T-H. Camel & St.-Tang ($n=2$)& Rosen. ($n=2$)& Rosen. ($n=3$)& Rosen. ($n=4$)  \\ \hline
  $1$&  $280.667$& $17.3333$ &$4.2000$ &$265.77$    &$-12.5$          &$303.16$       &$794.818$      &$1289.9$\\ \hline
  $2$& $250.667$ & $12.0000$ &$2.1886$ &$86.091$    &$-17.381$        &$235.68$       &$603.931$      &$1097.7$\\ \hline
  $3$&  $ 214.0$ & $11.0667$ &$2.1886$ &$86.091$    &$-21.548$        &$177.91$       &$536.449$      &$906.76$\\ \hline
  $4$&   $184.0$ & $8.8000$  &$1.2743$ &$40.593$    &$-26.429$        &$148.6$        &$478.673$      &$839.28$\\ \hline
  $5$&   $172.0$ & $8.1333$  &$1.2743$ &$40.593$    &$-28.929$        &$142.2$        &$411.191$      &$781.51$\\ \hline
  $6$& $151.333$ & $6.9867$  &$1.0218$ &$24.354$    &$-31.429$        &$130.43$       &$343.863$      &$714.02$\\ \hline
  $7$& $143.905$ & $6.5524$  &$1.0218$ &$24.354$    &$-32.778$        &$120.17$       &$314.559$      &$646.68$\\ \hline
  $8$& $130.762$ & $5.9048$  &$0.8912$ &$17.322$    &$-34.127$        &$103.43$       &$296.24$       &$579.2$\\ \hline
  $9$& $125.429$ & $5.6190$  &$0.8912$ &$17.322$    &$-34.921$        &$100.03$       &$266.936$      &$511.86$\\ \hline
 $10$& $117.571$ & $5.2245$  &$0.8538$ &$13.867$    &$-35.714$        &$91.011$       &$252.003$      &$482.56$\\ \hline
 $11$& $109.556$ & $5.0317$  &$0.8538$ &$13.867$    &$-36.956$        &$87.425$       &$239.06$       &$460.14$\\ \hline
 $12$& $106.222$ & $4.7778$  &$0.8384$ &$10.534$    &$-38.305$        &$76.959$       &$225.146$      &$430.83$\\ \hline
 $13$& $99.4545$ & $4.6444$  &$0.8384$ &$10.534$    &$-39.516$        &$75.033$       &$212.057$      &$406.9$\\ \hline
 $14$& $94.7407$ & $4.4741$  &$0.8366$ &$8.6752$    &$-40.31$         &$69.148$       &$203.723$      &$377.6$\\ \hline
 $15$& $90.6667$ & $4.3798$  &$0.8339$ &$8.6752$    &$-41.003$        &$66.266$       &$189.252$      &$362.66$\\ \hline
 $16$& $85.6364$ & $4.2618$  &$0.8336$ &$7.2466$    &$-42.483$        &$60.434$       &$179.188$      &$349.718$\\ \hline
 $17$& $83.0909$ & $4.1939$  &$0.8242$ &$7.2466$    &$-43.694$        &$59.243$       &$169.714$      &$334.462$\\ \hline
 $18$& $78.6434$ & $4.1102$  &$0.8139$ &$6.1763$    &$-44.905$        &$55.276$       &$163.392$      &$321.52$\\ \hline
 $19$& $75.8648$ & $4.0606$  &$0.8062$ &$6.1763$    &$-45.598$        &$52.947$       &$155.662$      &$309.927$\\ \hline
 $20$& $73.5152$ & $4.0000$  &$0.8025$ &$5.3826$    &$-46.291$        &$49.381$       &$150.066$      &$294.517$\\ \hline
 $25$& $61.6535$ & $3.4324$  &$0.7762$ &$4.2267$    &$-49.633$        &$40.704$       &$121.272$      &$242.747$\\ \hline
 $30$& $53.1228$ & $2.8927$  &$0.7474$ &$3.1892$    &$-52.976$        &$33.338$       &$101.914$      &$205.889$\\ \hline
 $35$& $46.5982$ & $2.5989$  &$0.7067$ &$2.7367$    &$-55.193$        &$28.72$        &$86.9293$      &$177.821$\\ \hline
 $40$& $41.6416$ & $2.2609$  &$0.6625$ &$2.2626$    &$-57.411$        &$24.883$       &$75.5008$      &$155.681$\\ \hline
 $45$& $37.4988$ & $2.0800$  &$0.6254$ &$2.0337$    &$-58.998$        &$21.984$       &$67.1078$      &$138.990$\\ \hline
 $50$& $34.0573$ & $1.8595$  &$0.5914$ &$1.7768$    &$-60.536$        &$19.739$       &$59.6395$      &$124.115$\\ \hline
  \end{tabular}
  \end{table}
\end{center}
}
One notices that the observed convergence rate is more-or-less in line with the $O(1/k)$ bound.

In a next experiment, we compare the Handelman-type densities ($f^H_k$ bounds) to SOS densities ($f^{sos}_{k/2}$ bounds);
see Tables \ref{tab:compare1} and \ref{tab:compare2}.

{\tiny
\begin{center}
\begin{table}[h!]
\caption{Comparison of  two upper bounds for Booth, Matyas, Three--Hump Camel and Motzkin Functions \label{tab:compare1}}
\label{table:result1}
  \begin{tabular}{| c | c | c | c | c | c | c | c | c |}
    \hline
\multirow{2}{*}{degree $k$} & \multicolumn{2}{c|}{Booth} & \multicolumn{2}{c|}{Matyas} & \multicolumn{2}{m{3cm}|}{Three--Hump Camel}& \multicolumn{2}{c|}{Motzkin} \\ \cline{2-9}
     & $f^{sos}_{k/2}$       & $f^H_k$ & $f^{sos}_{k/2}$       & $f^H_k$  & $f^{sos}_{k/2}$   & $f^H_k$  & $f^{sos}_{k/2}$ &$f^H_k$\\ \hline
$1$  &     &       $280.667$ & &         $17.3333$   &&         $265.77$&&               $4.2$\\ \hline
$2$  & $244.680$ & $250.667$ & $8.26667$ & $12.0$ & $265.774$ & $86.091$  & $4.2$      & $2.1886$\\ \hline
$3$  &&            $214.0$&&               $11.0667$&&          $86.091$&&               $2.1886$\\ \hline
$4$  & $162.486$ & $184.0$ & $5.32223$ & $8.8000$ & $29.0005$ & $40.593$  & $1.06147$  & $1.2743$\\ \hline
$5$ &&             $172.0$&&             $8.1333$&&             $40.593$&&                $1.2743$\\ \hline
$6$  & $118.383$ & $151.333$ & $4.28172$ & $6.9867$ & $29.0005$ & $24.354$  & $1.06147$  & $1.0218$\\ \hline
$7$ &&              $143.905$&&            $6.5524$&&              $24.354$&&              $1.0218$\\ \hline
$8$  & $97.6473$ & $130.762$ & $3.89427$ & $5.9048$ & $9.58064$ & $17.322$  & $0.829415$ & $0.8912$\\ \hline
$9$ &&              $125.429$&&            $5.6190$ &&            $17.322$&&               $0.8912$\\ \hline
$10$  & $69.8174$ & $117.571$ & $3.68942$ & $5.2245$ & $9.58064$ & $13.867$  & $0.801069$ & $0.8538$\\ \hline
$11$ &&             $109.556$&&             $5.0317$&&             $13.867$&&               $0.8538$\\ \hline
$12$  & $63.5454$ & $106.222$ & $2.99563$ & $4.7778$ & $4.43983$ & $10.534$  & $0.801069$ & $0.8384$\\ \hline
$13$ &&              $99.4545$&&            $4.6444$ &&            $10.534$&&               $0.8384$\\ \hline
$14$  & $47.0467$ & $94.7407$ & $2.54698$ & $4.4741$ & $4.43983$ & $8.6752$  & $0.708889$ & $0.8366$\\ \hline
$15$ &&             $90.6667$&&             $4.3798$&&             $8.6752$&&               $0.8339$\\ \hline
$16$  & $41.6727$ & $85.6364$ & $2.04307$ & $4.2618$ & $2.55032$ & $7.2466$  & $0.565553$ & $0.8336$\\ \hline
$17$ &&             $83.0909$&&             $4.1939$&&             $7.2466$&&               $0.8242$\\ \hline
$18$  & $34.2140$ & $78.6434$ & $1.83356$ & $4.1102$ & $2.55032$ & $6.1763$  & $0.565553$ & $0.8139$\\ \hline
$19$ &&             $75.8648$&&             $4.0606$&&             $6.1763$&&               $0.8062$\\ \hline
$20$ & $28.7248$ & $73.5152$ & $1.47840$ & $4.0000$ & $1.71275$ & $5.3826$  & $0.507829$ & $0.8025$\\
\hline
  \end{tabular}
  \end{table}
\end{center}
}
{\tiny
\begin{center}
\begin{table}[h!]
\caption{Comparison of two upper bounds for Styblinski--Tang and Rosenbrock Functions \label{tab:compare2}}
\label{table:result2}
  \begin{tabular}{| c | c | c | c | c | c | c| c | c|}
\hline
\multirow{2}{*}{degree $k$}&\multicolumn{2}{c|}{Sty.--Tang ($n=2$)}&\multicolumn{2}{c|}{Rosenb. ($n=2$)}&\multicolumn{2}{c|}{Rosenb. ($n=3$)}&\multicolumn{2}{c|}{Rosenb. ($n=4$)}\\ \cline{2-9}
          & $f^{sos}_{k/2}$       & $f^H_k$           & $f^{sos}_{k/2}$        & $f^H_k$       & $f^{sos}_{k/2}$       & $f^H_k$        & $f^{sos}_{k/2}$       & $f^H_k$\\ \hline
    $1$&&                $-12.5$&&                     $303.16$    &           & $794.818$    &           & $1289.9$\\ \hline
    $2$   & $-12.9249$ & $-17.381$      & $214.648$  & $235.68$    & $629.086$ & $603.931$    & $1048.19$ & $1097.7$     \\ \hline
    $3$&&                $-21.548$&&                   $177.91$    &           & $536.449$    &           & $906.76$\\ \hline
    $4$   & $-25.7727$ & $-26.429$      & $152.310$  & $148.6$     & $394.187$ & $478.673$    & $690.332$ & $839.28$   \\ \hline
    $5$&&                $-28.929$&&                   $142.2$     &           & $411.191$    &           & $781.51$\\ \hline
    $6$   & $-34.4030$ & $-31.429$      & $104.889$  & $130.43$    & $295.811$ & $343.863$    & $536.367$ & $714.02$  \\ \hline
    $7$&&                $-32.778$&&                   $120.17$    &           & $314.559$    &           & $646.68$\\ \hline
    $8$   & $-41.4436$ & $-34.127$      & $75.6010$  & $103.43$    & $206.903$ & $296.24$     & $382.729$ & $579.2$    \\ \hline
    $9$&&                $-34.921$&&                   $100.03$    &           & $266.936$    &           & $511.86$\\ \hline
    $10$   & $-45.1032$ & $-35.714$      & $51.5037$  &$91.011$    & $168.135$ & $252.003$    & $314.758$ & $482.56$    \\ \hline
    $11$&&                $-36.956$&&                  $87.425$    &           & $239.06$     &           & $460.14$\\ \hline
    $12$   & $-51.0509$ & $-38.305$      & $41.7878$  &$76.959$    & $121.558$ & $225.146$    & $236.709$ & $430.83$    \\ \hline
    $13$&&                $-39.516$&&                  $75.033$    &           & $212.057$    &           & $406.9$\\ \hline
    $14$   & $-56.4050$ & $-40.31$      & $30.1392$  & $69.148$    & $101.953$ & $203.723$    & $202.674$ & $377.6$    \\ \hline
    $15$&&                $-41.003$&&                  $66.266$    &           & $189.252$    &           & $362.66$\\ \hline
    $16$   & $-58.6004$ & $-42.483$      & $25.8329$  & $60.434$   & $77.4797$ & $179.188$    & $156.295$ & $349.718$    \\ \hline
    $17$&&                $-43.694$&&                  $59.243$    &           & $169.714$    &           & $334.462$   \\ \hline
    $18$   & $-60.7908$ & $-44.905$      & $19.4972$  & $55.276$   & $66.6954$ & $163.392$    & $137.015$ & $321.52$   \\
    \hline
  \end{tabular}
\end{table}
\end{center}
}
}

{\tiny
\begin{center}
\begin{table}[h!]
\caption{Test functions}\label{tabletest}
\label{table:example}
  \begin{tabular}{| m{1.5cm} | m{3.2cm} |m{2.7cm} |m{2.7cm} | m{1.2cm} |}
    \hline
    Name & Formula & Minimum ($f_{\min,\K}$) & Maximum ($f_{\max,\K}$) & Search domain ($\K$) \\ \hline
    Booth Function & $f=(20x_1+40x_2-37)^2+(40x_1+20x_2-35)^2$ & $f(0.55,0.65)=0$ & $f(0,0)=2594$ & $[0,1]^2$ \\ \hline
    Matyas Function & $f=0.26[(20x_1-10)^2+(20x_2-10)^2]-0.48(20x_1-10)(20x_2-10)$ &$f(0.5,0.5)=0$ & $f(0,1)=100$ & $[0,1]^2$ \\ \hline
    Motzkin Polynomial & $f=(4x_1-2)^4(4x_2-2)^2+(4x_1-2)^2(4x_2-2)^4-3(4x_1-2)^2(4x_2-2)^2+1$ & $f({1\over4},{1\over4})=f({1\over4},{3\over4})=f({3\over4},{1\over4})=f({3\over4},{3\over4})=0$ & $f(1,1)=81$ & $[0,1]^2$ \\ \hline
    Three-Hump Camel Function & $f=2(10x_1-5)^2-1.05(10x_1-5)^4+{1\over 6}(10x_1-5)^6+(10x_1-5)(10x_2-5)+(10x_2-5)^2$ &$f(0.5,0.5)=0$ &$ f(1,1)=2047.92$ & $[0,1]^2$\\ \hline
    Styblinski-Tang Function  & $f=\sum_{i=1}^n{1\over 2}(10x_i-5)^4-8(10x_i-5)^2+{5\over 2}(10x_i-5)$ & $f(0.209,\dots,0.209)
    =-39.16599n$& $f(1,\dots,1)=125n$ & $[0,1]^n$\\ \hline
    Rosenbrock Function & $f=\sum_{i=1}^{n-1}100(4.096x_{i+1}-2.048-(4.096x_i-2.048)^2)^2+(4.096x_i-3.048)^2 $ & $f({3048\over4096},\dots,{3048\over4096})=0$ & $f(0,\dots,0)=3905.93(n-1)$ & $[0,1]^n$ \\
    \hline
  \end{tabular}
  \end{table}
  \end{center}
}
{We start by listing the relative gaps $RG(\%)= \frac{f^H_k -f_{\min,\K}}{f_{\max,\K} -f_{\min,\K}} \times 100$
for these test functions in Table \ref{tab:overview} for densities with degree up to $k=50$.}

{\tiny
\begin{center}
\begin{table}[h!]
\caption{Relative gaps of $f^H_k$ for test functions in Table \ref{tabletest}. \label{tab:overview}}
  \begin{tabular}{|c| c | c | c | c | c | c |c|c|}\hline
  $k$&     Booth & Matyas    & Motzkin & T-H. Camel & St.-Tang ($n=2$)& Rosen. ($n=2$)& Rosen. ($n=3$)& Rosen. ($n=4$)  \\ \hline
  $1$&$10.8199$&$17.3333$&$5.1852$&$12.9776$&$20.0499$&$7.7615$&$10.1745$&$11.0081$\\ \hline
$2$&$9.6633$&$12.0000$&$2.7020$&$4.2038$&$18.5633$&$6.0339$&$7.7310$&$9.3678$\\ \hline
$3$&$8.2498$&$11.0667$&$2.7020$&$4.2038$&$17.2942$&$4.5549$&$6.8671$&$7.7383$\\ \hline
$4$&$7.0933$&$8.8000$&$1.5732$&$1.9822$&$15.8076$&$3.8045$&$6.1275$&$7.1624$\\ \hline
$5$&$6.6307$&$8.1333$&$1.5732$&$1.9822$&$15.0461$&$3.6406$&$5.2637$&$6.6694$\\ \hline
$6$&$5.8340$&$6.9867$&$1.2615$&$1.1892$&$14.2847$&$3.3393$&$4.4018$&$6.0935$\\ \hline
$7$&$5.5476$&$6.5524$&$1.2615$&$1.1892$&$13.8738$&$3.0766$&$4.0267$&$5.5188$\\ \hline
$8$&$5.0409$&$5.9048$&$1.1002$&$0.8458$&$13.4630$&$2.6480$&$3.7922$&$4.9429$\\ \hline
$9$&$4.8354$&$5.6190$&$1.1002$&$0.8458$&$13.2211$&$2.5610$&$3.4171$&$4.3682$\\ \hline
$10$&$4.5324$&$5.2245$&$1.0541$&$0.6771$&$12.9796$&$2.3301$&$3.2259$&$4.1182$\\ \hline
$11$&$4.2234$&$5.0317$&$1.0541$&$0.6771$&$12.6013$&$2.2383$&$3.0602$&$3.9269$\\ \hline
$12$&$4.0949$&$4.7778$&$1.0351$&$0.5144$&$12.1905$&$1.9703$&$2.8821$&$3.6767$\\ \hline
$13$&$3.8340$&$4.6444$&$1.0351$&$0.5144$&$11.8216$&$1.9210$&$2.7146$&$3.4725$\\ \hline
$14$&$3.6523$&$4.4741$&$1.0328$&$0.4236$&$11.5798$&$1.7703$&$2.6079$&$3.2225$\\ \hline
$15$&$3.4952$&$4.3798$&$1.0295$&$0.4236$&$11.3687$&$1.6965$&$2.4226$&$3.0950$\\ \hline
$16$&$3.3013$&$4.2618$&$1.0291$&$0.3539$&$10.9180$&$1.5472$&$2.2938$&$2.9845$\\ \hline
$17$&$3.2032$&$4.1939$&$1.0175$&$0.3539$&$10.5491$&$1.5167$&$2.1725$&$2.8543$\\ \hline
$18$&$3.0317$&$4.1102$&$1.0048$&$0.3016$&$10.1803$&$1.4152$&$2.0916$&$2.7439$\\ \hline
$19$&$2.9246$&$4.0606$&$0.9953$&$0.3016$&$9.9692$&$1.3556$&$1.9926$&$2.6449$\\ \hline
$20$&$2.8340$&$4.0000$&$0.9907$&$0.2628$&$9.7582$&$1.2643$&$1.9210$&$2.5134$\\ \hline
$25$&$2.3768$&$3.4324$&$0.9583$&$0.2064$&$8.7403$&$1.0421$&$1.5524$&$2.0716$\\ \hline
$30$&$2.0479$&$2.8927$&$0.9227$&$0.1557$&$7.7221$&$0.8535$&$1.3046$&$1.7571$\\ \hline
$35$&$1.7964$&$2.5989$&$0.8725$&$0.1336$&$7.0469$&$0.7353$&$1.1128$&$1.5175$\\ \hline
$40$&$1.6053$&$2.2609$&$0.8179$&$0.1105$&$6.3713$&$0.6371$&$0.9665$&$1.3286$\\ \hline
$45$&$1.4456$&$2.0800$&$0.7721$&$0.0993$&$5.8880$&$0.5628$&$0.8591$&$1.1861$\\ \hline
$50$&$1.3129$&$1.8595$&$0.7301$&$0.0868$&$5.4195$&$0.5054$&$0.7634$&$1.0592$\\ \hline
  \end{tabular}
  \end{table}
\end{center}
}
One notices that the observed convergence rate is more-or-less in line with the $O(1/k)$ bound.

In a next experiment, we compare the Handelman-type densities ($RG(\%)$ by $f^H_k$ bounds) to SOS densities
 (we still use the notation $RG(\%)=(f^{sos}_k-f_{\min,\K})/(f_{\max,\K}-f_{\min,\K}) \times 100$); we also compare their computation times (in seconds),
for which we use the approaches described in Section \ref{sec:computing procedure},
  and we assume that the values $\gamma_{(\eta,\beta)}$ for all $(\eta,\beta)\in \oN_{k+d}^{2n}$ and the moments of the Lebesgue measure on $\K=[0,1]^n$ are computed beforehand;
see Tables \ref{tab:compare1}, \ref{tab:compare2} and \ref{tab:comparenew1}. We performed the computation using Matlab on a Laptop with Intel Core i7-4600U CPU (2.10 GHz) and 8 GB RAM. The generalized eigenvalue computation was done in Matlab using the eig function.

{\tiny
\begin{center}
\begin{table}[h!]
\caption{Comparison of  two upper bounds for Booth, Matyas and Three--Hump Camel functions in relative gaps and computation times (sec.) \label{tab:compare1}}
  \begin{tabular}{| m{0.3cm} | m{0.7cm} | m{0.7cm} | m{0.7cm} | m{0.7cm} |m{0.7cm} | m{0.7cm} |m{0.7cm} | m{0.7cm} |m{0.7cm} | m{0.7cm} |m{0.7cm} | m{0.7cm} |}
    \hline
\multirow{3}{*}{$k$} & \multicolumn{4}{c|}{Booth} & \multicolumn{4}{c|}{Matyas} & \multicolumn{4}{c|}{Three--Hump Camel} \\ \cline{2-13}
     & \multicolumn{2}{c|}{$f^{sos}_{k/2}$}       & \multicolumn{2}{c|}{$f^H_k$} & \multicolumn{2}{c|}{$f^{sos}_{k/2}$}       & \multicolumn{2}{c|}{$f^H_k$}  & \multicolumn{2}{c|}{$f^{sos}_{k/2}$}   & \multicolumn{2}{c|}{$f^H_k$}  \\ \cline{2-13}
     & $RG(\%)$ & time & $RG(\%)$ & time &$RG(\%)$ & time &$RG(\%)$ & time &$RG(\%)$ & time &$RG(\%)$ & time \\ \hline
 $2$&$9.433$&$0.0007$&$9.663$&$0.0001$&$8.267$&$0.0009$&$ 12.0$&$0.0001$&$  12.98$&$0.0008$&$ 4.204$&$0.0001$\\ \hline
$4$&$6.264$&$0.0006$&$7.093$&$0.0003$&$5.322$&$0.0005$&$  8.8$&$0.0003$&$  1.416$&$0.0006$&$ 1.982$&$0.0002$\\ \hline
$6$&$4.564$&$0.0008$&$5.834$&$0.0008$&$4.282$&$0.0009$&$6.987$&$0.0007$&$  1.416$&$0.0011$&$ 1.189$&$0.0007$\\ \hline
$8$&$3.764$&$0.0015$&$5.041$&$0.0025$&$3.894$&$0.0017$&$5.905$&$0.0018$&$ 0.4678$&$ 0.002$&$0.8458$&$0.0017$\\ \hline
$10$&$2.691$&$0.0025$&$4.532$&$0.0038$&$3.689$&$0.0033$&$5.224$&$0.0039$&$ 0.4678$&$0.0035$&$0.6771$&$0.0037$\\ \hline
$12$&$ 2.45$&$0.0047$&$4.095$&$0.0065$&$2.996$&$0.0056$&$4.778$&$0.0074$&$ 0.2168$&$0.0086$&$0.5144$&$0.0063$\\ \hline
$14$&$1.814$&$0.0072$&$3.652$&$0.0109$&$2.547$&$0.0102$&$4.474$&$0.0112$&$ 0.2168$&$0.0128$&$0.4236$&$0.0117$\\ \hline
$16$&$1.607$&$0.0097$&$3.301$&$0.0177$&$2.043$&$0.0131$&$4.262$&$0.0178$&$ 0.1245$&$0.0139$&$0.3539$&$0.0179$\\ \hline
$18$&$1.319$&$0.0146$&$3.032$&$0.0276$&$1.834$&$0.0226$&$ 4.11$&$0.0266$&$ 0.1245$&$0.0377$&$0.3016$&$0.027$\\ \hline
$20$&$1.107$&$0.0242$&$2.834$&$0.0391$&$1.478$&$0.0329$&$4.0$&$0.0384$&$0.08363$&$0.0312$&$0.2628$&$0.0397$\\ \hline
  \end{tabular}
  \end{table}
\end{center}
}

{\tiny
\begin{center}
\begin{table}[h!]
\caption{Comparison of  two upper bounds for Motzkin, Styblinski-Tang ($n=2$) and Rosenbrock ($n=2$) functions in relative gaps and computation times (sec.) \label{tab:compare2}}
  \begin{tabular}{| m{0.3cm} | m{0.7cm} | m{0.7cm} | m{0.7cm} | m{0.7cm} |m{0.7cm} | m{0.7cm} |m{0.7cm} | m{0.7cm} |m{0.7cm} | m{0.7cm} |m{0.7cm} | m{0.7cm} |}
    \hline
\multirow{3}{*}{$k$} & \multicolumn{4}{c|}{Motzkin} & \multicolumn{4}{c|}{Sty.--Tang ($n=2$)} & \multicolumn{4}{c|}{Rosenb. ($n=2$)} \\ \cline{2-13}
     & \multicolumn{2}{c|}{$f^{sos}_{k/2}$}       & \multicolumn{2}{c|}{$f^H_k$} & \multicolumn{2}{c|}{$f^{sos}_{k/2}$}       & \multicolumn{2}{c|}{$f^H_k$}  & \multicolumn{2}{c|}{$f^{sos}_{k/2}$}   & \multicolumn{2}{c|}{$f^H_k$}  \\ \cline{2-13}
     & $RG(\%)$ & time & $RG(\%)$ & time &$RG(\%)$ & time &$RG(\%)$ & time &$RG(\%)$ & time &$RG(\%)$ & time \\ \hline
 $2$&$  5.185$&$0.0008$&$ 2.702$&$0.0001$&$19.92$&$0.0008$&$18.56$&$0.0001$&$ 5.495$&$ 0.001$&$6.034$&$0.0001$\\ \hline
$4$&$   1.31$&$0.0005$&$ 1.573$&$0.0003$&$16.01$&$0.0005$&$15.81$&$0.0002$&$ 3.899$&$0.0009$&$3.804$&$0.0003$\\ \hline
$6$&$   1.31$&$0.0009$&$ 1.261$&$0.0009$&$13.38$&$0.0009$&$14.28$&$0.0008$&$ 2.685$&$0.0018$&$3.339$&$0.0013$\\ \hline
$8$&$  1.024$&$0.0016$&$   1.1$&$ 0.002$&$11.23$&$0.0016$&$13.46$&$0.0021$&$ 1.936$&$0.0031$&$2.648$&$0.0034$\\ \hline
$10$&$  0.989$&$0.0034$&$ 1.054$&$0.0043$&$10.12$&$0.0028$&$12.98$&$0.0037$&$ 1.319$&$0.0031$&$ 2.33$&$0.0057$\\ \hline
$12$&$  0.989$&$0.0062$&$ 1.035$&$ 0.006$&$8.308$&$0.0063$&$12.19$&$0.0078$&$  1.07$&$0.0049$&$ 1.97$&$ 0.008$\\ \hline
$14$&$ 0.8752$&$0.0096$&$ 1.033$&$0.0168$&$6.678$&$0.0097$&$11.58$&$0.0177$&$0.7716$&$0.0083$&$ 1.77$&$ 0.012$\\ \hline
$16$&$ 0.6982$&$0.0216$&$ 1.029$&$0.0179$&$6.009$&$ 0.014$&$10.92$&$0.0214$&$0.6614$&$0.0119$&$1.547$&$0.0237$\\ \hline
$18$&$ 0.6982$&$0.0242$&$ 1.005$&$0.0266$&$5.342$&$0.0231$&$10.18$&$0.0358$&$0.4992$&$0.0198$&$1.415$&$0.0264$\\ \hline
$20$&$ 0.6269$&$0.0298$&$0.9907$&$ 0.046$&$ 4.36$&$0.0286$&$9.758$&$ 0.042$&$0.4455$&$0.0324$&$1.264$&$0.0383$\\ \hline
  \end{tabular}
  \end{table}
\end{center}
}

{\tiny
\begin{center}
\begin{table}[h!]
\caption{Comparison of two upper bounds for Rosenbrock functions ($n=3,4$) in relative gaps and computation times (sec.)\label{tab:comparenew1}}
  \begin{tabular}{| m{0.3cm} | m{0.7cm} | m{0.7cm} | m{0.7cm} | m{0.7cm} |m{0.7cm} | m{0.7cm} |m{0.7cm} | m{0.7cm} |}
    \hline
\multirow{3}{*}{$k$} & \multicolumn{4}{c|}{Rosenb. ($n=3$)} & \multicolumn{4}{c|}{Rosenb. ($n=4$)} \\ \cline{2-9}
     & \multicolumn{2}{c|}{$f^{sos}_{k/2}$}       & \multicolumn{2}{c|}{$f^H_k$} & \multicolumn{2}{c|}{$f^{sos}_{k/2}$}       & \multicolumn{2}{c|}{$f^H_k$}  \\ \cline{2-9}
     & $RG(\%)$ & time & $RG(\%)$ & time &$RG(\%)$ & time &$RG(\%)$ & time  \\ \hline
 $2$&$  8.053$&$0.0033$&$7.731$&$0.0001$&$8.945$&$0.0204$&$9.368$&$0.0002$\\ \hline
$4$&$  5.046$&$0.0009$&$6.128$&$0.0007$&$5.891$&$0.0243$&$7.162$&$0.0017$\\ \hline
$6$&$  3.787$&$0.0024$&$4.402$&$0.0021$&$4.577$&$0.0111$&$6.093$&$0.0062$\\ \hline
$8$&$  2.649$&$0.0078$&$3.792$&$0.0054$&$3.266$&$0.0442$&$4.943$&$0.0228$\\ \hline
$10$&$  2.152$&$ 0.016$&$3.226$&$0.0135$&$2.686$&$0.2087$&$4.118$&$0.0699$\\ \hline
$12$&$  1.556$&$0.0355$&$2.882$&$0.0244$&$ 2.02$&$0.3774$&$3.677$&$0.1837$\\ \hline
$14$&$  1.305$&$0.0811$&$2.608$&$ 0.041$&$ 1.73$&$0.9121$&$3.222$&$ 0.431$\\ \hline
$16$&$ 0.9918$&$0.1324$&$2.294$&$0.0684$&$1.334$&$ 1.986$&$2.985$&$ 1.099$\\ \hline
$18$&$ 0.8538$&$0.2272$&$2.092$&$0.1139$&$1.169$&$ 4.279$&$2.744$&$  1.92$\\ \hline
  \end{tabular}
\end{table}
\end{center}
}

As described in Example \ref{ex:1}, there is no ordering possible in general between $f^{sos}_{k/2}$ and $f^H_k$, but one
observes that $f^{sos}_{k/2} \le f^H_k$ holds in most cases, i.e.,\ the SOS densities usually give better bounds for a given degree.
One should bear in mind though, that the $f^{sos}_{k/2}$ are in general much more expensive to compute than $f^H_k$, as discussed in Section \ref{sec:computing procedure}.
This is not really visible in the computational times presented here, since the values of $n$ in the examples are too small.

Next we consider the strategies for generating feasible points corresponding to the bounds $f^H_k$, as described in Section \ref{sec:feasible points};
see Table \ref{tab:feas1}.

{\tiny
\begin{center}
\begin{table}[h!]
\caption{\label{tab:feas1}Comparing strategies for generating feasible points for Booth, Matyas, Motzkin, and Three--Hump Camel functions.
Here, $\hat{\x}$ denotes the mode of the optimal density.}
  \begin{tabular}{|c| c |c|c| c | c | c | c | c |c|c|} \hline
\multirow{2}{*}{$k$} & \multicolumn{3}{c|}{Booth} & \multicolumn{3}{c|}{Matyas} & \multicolumn{2}{c|}{Motzkin}& \multicolumn{2}{c|}{Three-H. Camel} \\ \cline{2-11}
        & $f^H_k$&$f(\hat{\x})$&$f(\mathbb{E}(X))$&$f^H_k$ &$f(\hat{\x})$&$f(\mathbb{E}(X))$&$f^H_k$ & $f(\hat{\x})$ & $f^H_k$       & $f(\hat{\x})$       \\ \hline
  $5$&   $172.0$ & $96.222$  &$17.0$   &$8.1333$&  $4.0$ &$1.460$   &$1.2743$& $1.0$ &$40.593$& ---    \\ \hline
 $10$& $117.571$ & $96.222$  &$25.806$ &$5.2245$&  $4.0$ &$2.0408$   &$0.8538$& $1.0$ &$13.867$& ---    \\ \hline
 $15$& $90.6667$ & $27.580$  &$7.6777$ &$4.3798$&  $4.0$ &$2.5017$   &$0.8339$& $1.0$ &$8.6752$& $0.273$    \\ \hline
 $20$& $73.5152$ & $9.0$     &$2.0$    &$4.0000$&  $0.16$&$0.1111$   &$0.8025$& $1.0$ &$5.3826$& $0$    \\ \hline
 $25$& $61.6535$ & $4.5785$  &$1.8107$ &$3.4324$&  $0.3161$&$0.2404$ &$0.7762$& $1.0$    &$4.2267$& $0.1653$    \\ \hline
 $30$& $53.1228$ & $1.6403$  &$0.41428$&$2.8927$&  $0.0178$&$0.0138$ &$0.7474$& $1.0$    &$3.1892$& $0$    \\ \hline
 $35$& $46.5982$ & $1.0923$  &$0.53061$&$2.5989$&  $0.1071$&$0.0897$ &$0.7067$& $0.4214$ &$2.7367$& $0.110$    \\ \hline
 $40$& $41.6416$ & $0.8454$  &$0.64566$&$2.2609$&  $0$     &$0$ &$0.6625$& $0.2955$ &$2.2626$& $0$    \\ \hline
 $45$& $37.4988$ & $2.0$     &$0.80157$&$2.0800$&  $0$     &$0$ &$0.6254$& $0.1985$ &$2.0337$& $0.0783$    \\ \hline
 $50$& $34.0573$ & $0.9784$  &$0.22222$&$1.8595$&  $0$     &$0$ &$0.5914$& $0.1297$ &$1.7768$& $0$    \\ \hline
  \end{tabular}
  \end{table}
\end{center}
}
In Table \ref{tab:feas1}, the columns marked $f(\mathbb{E}(X))$ refer to the convex case in Theorem \ref{thm:convex case}.
The columns marked $f(\hat{\x})$ correspond to the mode $\hat{\x}$ of the optimal density; an entry `---' in these columns means that
the mode of the optimal density was not unique.

For the convex Booth and Matyas functions $f(\mathbb{E}(X))$ gives the best upper bound.
For sufficiently large $k$ the mode $\hat{\x}$ gives a better bound than $f^H_k$, indicating that this heuristic is useful in the non-convex case.

As a final comparison, we also look at the general sampling technique via the method of conditional distributions; see Tables \ref{tab:sampling1} and
\ref{tab:sampling2}. We present results for the Motzkin polynomial and the Three hump camel function.

{\tiny
\begin{center}
\begin{table}[h!]
\caption{\label{tab:sampling1}Sampling results for Motzkin polynomial}
  \begin{tabular}{| c | c | c | c | c | c |c|c|}\hline
\multicolumn{2}{|c|}{ } & \multicolumn{3}{c|}{Sample size 10}& \multicolumn{3}{c|}{Sample size 100}\\ \hline
  $k$& $f_k^H$   &    Mean  & Variance  & Minimum  &  Mean      & Variance    & Minimum  \\ \hline
    $5$ &   $1.2743$ &   $0.8330$ &   $0.0466$  &  $0.2790$&    $1.1590$ &   $4.2023$ &   $0.0525$\\ \hline
   $10$ &   $0.8538$ &   $0.7005$ &   $0.0800$  &  $0.1862$&    $0.8435$ &   $0.1448$ &   $0.1149$\\ \hline
   $15$ &   $0.8339$ &   $0.9063$ &   $0.0153$  &  $0.6069$&    $0.8465$ &   $0.0932$ &   $0.0593$\\ \hline
   $20$ &   $0.8025$ &   $0.7704$ &   $0.0336$ &   $0.3826$&    $0.9326$ &   $1.6454$ &   $0.0040$\\ \hline
   $25$ &   $0.7762$ &   $0.7995$ &   $0.1014$ &   $0.2433$&    $0.7493$ &   $0.0717$ &   $0.0722$\\ \hline
   $30$ &   $0.7474$ &   $1.0104$ &   $1.2852$ &   $0.1091$&    $0.8290$ &   $0.8620$ &   $0.0522$\\ \hline
   $35$ &   $0.7067$ &   $0.5930$ &   $0.0981$ &   $0.1940$&    $0.7647$ &   $1.3012$ &   $0.0016$\\ \hline
   $40$ &   $0.6625$ &   $0.6967$ &   $0.0497$ &   $0.2867$&    $0.6028$ &   $0.1371$ &   $0.0021$\\ \hline
   $45$ &   $0.6254$ &   $0.6258$ &   $0.0500$ &   $0.3548$&    $0.7007$ &   $0.2242$ &   $0.0090$\\ \hline
   $50$ &   $0.5914$ &   $0.6244$ &   $0.0718$ &   $0.3000$&    $0.5782$ &   $0.1406$ &   $0.0154$\\ \hline
\multicolumn{2}{|c|}{Uniform Sample}&$4.2888$&$37.4427$&$0.5290$& $3.7397$&$53.8833$&$0.0492$  \\ \hline
  \end{tabular}
  \end{table}
\end{center}
}

{\tiny
\begin{center}
\begin{table}[h!]
\caption{\label{tab:sampling2}Sampling results for Three-Hump Camel function}
  \begin{tabular}{| c | c | c | c | c | c |c|c|}\hline
\multicolumn{2}{|c|}{ } & \multicolumn{3}{c|}{Sample size 10}& \multicolumn{3}{c|}{Sample size 100}\\ \hline
  $k$& $f_k^H$   &    Mean  & Variance  & Minimum  &  Mean      & Variance    & Minimum  \\ \hline
  $5$ &$40.593$& $91.872$&  $27065.0$ &  $0.90053$& $53.656$&  $14575.0$&  $0.58086$\\ \hline
 $10$ &$13.867$& $11.312$&   $45.784$ &   $0.8916$& $14.273$&   $382.98$& $0.018985$\\ \hline
 $15$ &$8.6752$& $5.6281$&   $31.311$ &  $0.21853$& $10.373$&   $778.32$& $0.022282$\\ \hline
 $20$ &$5.3826$& $3.5174$&  $16.053$  & $0.43269$& $9.4178$ &  $653.27$& $0.041752$\\ \hline
 $25$ &$4.2267$& $10.741$&  $776.55$  & $0.59616$& $5.0642$ &  $112.61$& $0.039463$\\ \hline
 $30$ &$3.1892$& $2.2515$&  $8.6915$  &$0.063265$& $2.2096$ &  $6.2611$& $0.040845$\\ \hline
 $35$ &$2.7367$& $1.5032$&  $1.4626$  & $0.0085016$& $3.0679$&  $16.47$&  $0.24175$\\ \hline
 $40$ &$2.2626$& $1.3941$&  $1.1995$  & $0.21653$& $2.3431$ &  $17.735$& $0.069473$\\ \hline
 $45$ &$2.0337$& $2.3904$&  $10.934$  & $0.57818$& $1.8928$ &  $3.6581$& $0.050042$\\ \hline
 $50$ &$1.7768$&  $1.664$&  $3.3983$  &$0.061995$& $1.6301$ &  $1.6966$& $0.048476$\\ \hline
\multicolumn{2}{|c|}{Uniform Sample}&$306.96$  & $275366.0$  & $0.15602$& $368.28$& $296055.0$&  $0.59281$\\ \hline
  \end{tabular}
  \end{table}
\end{center}
}

For each
 degree $k$, we use the sample sizes $10$ and  $100$.
In Tables \ref{tab:sampling1} and
\ref{tab:sampling2} we record the mean, variance and the minimum value of these samples.
(Recall that the expected value of the sample mean equals $f^H_k$.) We also generate samples uniformly from $[0,1]^n$, for comparison.

The mean of the sample function values approximates $f^H_k$ reasonably well for sample size $100$, but less so for sample
size $10$. Moreover, the mean sample function value for uniform sampling from $[0,1]^n$ is much higher than $f^H_k$.
Also, the minimum function value for sampling is significantly lower than the minimum function value
obtained by uniform sampling for most values of $k$.

\section{Concluding remarks}
\label{sec:conclusion}

One may consider several strategies to improve the upper bounds $f^H_k$, and we list some in turn.
\begin{itemize}
\item
A natural idea is to use density functions that are convex combinations of SOS and Handelman-type densities, i.e.,\ that belong
to $\HH_k + \Sigma[x]_r$ for some nonnegative integers $k,r$.
Unfortunately one may show that this does not yield a better upper bound than $\min \{f^{sos}_r, f^H_k\}$, namely
\[
\min \{f^{sos}_r, f^H_k\} = \,\inf_{\sigma\in\HH_k+\Sigma[x]_r\,}\,\left\{\displaystyle\int_\K f(\x)\,\sigma(\x)\,d\x:\:
\displaystyle\int_\K \sigma(\x)\,d\x=1\right\},\quad k,r\in \N.
\]
(We omit the proof since it is straightforward, and of limited interest.)
\item
For optimization over the hypercube,  a second idea is to replace the integer exponents in Handelman representations of the density by more general positive real exponents.
(This is amenable to analysis since the beta distribution is defined for arbitrary positive shape parameters \tcolred{ and with its moments  available via relation (\ref{kmoment})}.)
 If we drop the integrality requirement for $(\eta,\beta)$ in the definition of $f^H_k$ (see \eqref{maxcutformula}), we obtain the bound:
\begin{equation*}
f^H_k \ge {f_k^{beta}}\,:=\,\displaystyle\min_{(\eta,\beta)\in\Delta^{2n}_k}\:\sum_{\alpha\in\N^{n}_{\le d}}f_{\alpha}\,\frac{\gamma_{(\eta+\alpha,\beta)}}{\gamma_{(\eta,\beta)}}, \quad k \in \N,
\end{equation*}
where $\Delta^{2n}_k$ is the simplex $\Delta^{2n}_k := \{ (\eta,\beta) \in \mathbb{R}^{2n}_+ \; : \; \sum_{i=1}^n (\eta_i+\beta_i) = k\}$.

As with $f^H_k$, when $(\eta,\beta)$ is such that $f_k^{beta}=\sum_{\alpha\in\N^{n}_{\le d}}f_{\alpha}\,\frac{\gamma_{(\eta+\alpha,\beta)}}{\gamma_{(\eta,\beta)}},$
one has that $ {f_k^{beta}} = \mathbb{E}(f(X))$ where $X = (X_1,\ldots, X_n)$ and $X_i \sim beta(\eta_i+1,\beta_i+1)$ ($i \in [n]$). Using the
moments of the beta distribution in \eqref{kmoment}, we obtain
\begin{equation}\label{eq:betabound}
f_k^{beta} = \min_{(\eta,\beta)\in\Delta^{2n}_k}\:\sum_{\alpha\in\N^{n}_d}f_{\alpha} \prod_{i=1}^n {(\eta_i+1)\cdots(\eta_i+\alpha_i) \over (\eta_i+\beta_i+2)\cdots(\eta_i+\beta_i+\alpha_i+1)} , \quad k \in \N.
\end{equation}
Thus one may obtain the bounds $f_k^{beta}$ by minimizing a rational function over a simplex.
A question for future research is whether one may approximate $f^{beta}_k$ to any fixed accuracy  in time polynomial in $k$ and $n$.
(This may be possible, since the minimization of fixed-degree polynomials over a simplex allows a PTAS \cite{KLP06}, and the relevant algorithmic techniques have been extended to
rational objective functions \cite{Jibetean}.)

One may also use the value of $(\eta,\beta)\in\Delta^{2n}_k$ that gives $f^H_k$ as a starting point in the minimization problem \eqref{eq:betabound}, and employ
any iterative method to obtain a better upper bound heuristically.
Subsequently, one may use the resulting density function to obtain `good' feasible points as described in Section \ref{sec:feasible points}.
Of course, one may also use the feasible points (generated by sampling) as starting points for iterative methods. Suitable iterative
methods for bound-constrained optimization are described in the books \cite{Bertsekas_book,Fletcher_book,Gill Murray Wright}, and the latest algorithmic developments
for bound constrained global optimization are surveyed in the recent thesis \cite{Pal thesis}.

\item
Perhaps the most promising practical variant of the $f^H_k$ bound is the following parameter:
\begin{eqnarray*}
f_{r,k}^H &=& \displaystyle\min_{(\eta,\beta)\in\N^{2n}_k}\:\frac{\displaystyle\int_\K f(\x)\,\left(\x^\eta(\1-\x)^\beta\right)^r\,d\x}
{\displaystyle\int_\K (\x^\eta(\1-\x)^\beta)^r\,d\x} \\
 &=&\min_{(\eta,\beta)\in\N^{2n}_k}\:\sum_{\alpha\in\N^{n}}f_{\alpha}\,\frac{\gamma_{(r\eta+\alpha,r\beta)}}{\gamma_{(r\eta,r\beta)}}
\quad \quad \text{ for } r,k \in \mathbb{N}.
\end{eqnarray*}
 Thus, the idea is to replace the density $\sigma(x) = \x^\eta(\1-\x)^\beta/\int_\K \x^\eta(\1-\x)^\beta\,d\x$ by the density $\sigma(x)^r/\int_\K \sigma(r)^r\,d\x$ for some power $r \in \mathbb{N}$. Hence, for $r=1$, $f^H_{1,k}=f^H_k$.
 Note that the calculation of $f_{r,k}^H$ requires exactly the same number of elementary operations as the calculation of $f^H_k$, provided all the required moments are available.
 \tcolred{(Also note that, for $\K=[0,1]^n$, one could allow  an arbitrary $r>0$  since the moments are still available as pointed out  above.)}

In Tables \ref{tab:fkr1}, \ref{tab:fkr2}, and \ref{tab:fkr3}, we show some relative gaps
for the parameter $f_{r,k}^H$, defined as $(f_{r,k}^H-f_{\min,\K})/(f_{\max,\K}-f_{\min,\K}) \times 100$.

{\tiny
\begin{center}
\begin{table}[h!]
\caption{\label{tab:fkr1} Relative gaps of $f_{r,k}^H$ for the Styblinski-Tang function ($n=2$)}
  \begin{tabular}{|c| c | c | c | c | c |}\hline
$k$&$r=1$&$r=2$&$r=3$&$r=4$&$r=5$\\ \hline
$1$&$20.0499$&$20.7931$&$21.3190$&$21.3190$&$21.3190$\\ \hline
$2$&$18.5633$&$18.4184$&$18.7040$&$19.0470$&$19.3665$\\ \hline
$3$&$17.2942$&$17.2522$&$16.9793$&$16.7974$&$16.6631$\\ \hline
$4$&$15.8076$&$15.5176$&$15.2511$&$14.6398$&$14.1912$\\ \hline
$5$&$15.0461$&$14.3517$&$14.3645$&$13.8452$&$13.3692$\\ \hline
$6$&$14.2847$&$13.1855$&$12.6361$&$12.2758$&$12.0074$\\ \hline
$7$&$13.8738$&$12.0519$&$10.9113$&$10.1182$&$ 9.5355$\\ \hline
$8$&$13.4630$&$10.9180$&$ 9.1831$&$ 7.9606$&$ 7.0636$\\ \hline
$9$&$13.2211$&$10.3381$&$ 8.4528$&$ 7.1660$&$ 6.2416$\\ \hline
$10$&$12.9796$&$ 9.7582$&$ 7.7221$&$ 6.3713$&$ 5.4195$\\ \hline
\end{tabular}
\end{table}
\end{center}

\begin{center}
\begin{table}[h!]
\caption{\label{tab:fkr2} Relative gaps of $f_{r,k}^H$ for the Rosenbrock function ($n=3$)}
  \begin{tabular}{|c| c | c | c | c | c | }\hline
  $k$&$r=1$&$r=2$&$r=3$&$r=4$&$r=5$\\ \hline
$1$&$10.1745$&$ 9.3107$&$ 8.9356$&$ 8.7536$&$ 8.6603$\\ \hline
$2$&$ 7.7310$&$ 6.5571$&$ 6.0674$&$ 5.8142$&$ 5.6807$\\ \hline
$3$&$ 6.8671$&$ 5.7557$&$ 5.1021$&$ 4.7091$&$ 4.4890$\\ \hline
$4$&$ 6.1275$&$ 4.7220$&$ 3.7699$&$ 3.2404$&$ 2.9126$\\ \hline
$5$&$ 5.2637$&$ 3.5090$&$ 3.0196$&$ 2.9302$&$ 2.9826$\\ \hline
$6$&$ 4.4018$&$ 2.8821$&$ 2.4570$&$ 1.9388$&$ 1.5359$\\ \hline
$7$&$ 4.0267$&$ 2.8901$&$ 2.1273$&$ 1.6465$&$ 1.3623$\\ \hline
$8$&$ 3.7922$&$ 2.5456$&$ 1.8554$&$ 1.4301$&$ 1.1273$\\ \hline
$9$&$ 3.4171$&$ 2.3701$&$ 1.7074$&$ 1.3206$&$ 1.0798$\\ \hline
$10$&$ 3.2259$&$ 2.0283$&$ 1.4251$&$ 1.1250$&$ 0.8966$\\ \hline
\end{tabular}
\end{table}
\end{center}

\begin{center}
\begin{table}[h!]
\caption{\label{tab:fkr3} Relative gaps of $f_{r,k}^H$ for the Rosenbrock function ($n=4$)}
  \begin{tabular}{|c| c | c | c | c | c | }\hline
  $k$&$r=1$&$r=2$&$r=3$&$r=4$&$r=5$\\ \hline
$1$&$11.0081$&$10.4440$&$10.1939$&$10.0727$&$10.0104$\\ \hline
$2$&$ 9.3678$&$ 8.5929$&$ 8.2655$&$ 8.0963$&$ 8.0074$\\ \hline
$3$&$ 7.7383$&$ 6.7421$&$ 6.3371$&$ 6.1202$&$ 6.0046$\\ \hline
$4$&$ 7.1624$&$ 6.2079$&$ 5.7098$&$ 5.4000$&$ 5.2266$\\ \hline
$5$&$ 6.6694$&$ 5.1729$&$ 4.2870$&$ 3.8120$&$ 3.5307$\\ \hline
$6$&$ 6.0935$&$ 4.4015$&$ 3.3909$&$ 2.8242$&$ 2.4706$\\ \hline
$7$&$ 5.5188$&$ 3.5929$&$ 2.8908$&$ 2.6175$&$ 2.5173$\\ \hline
$8$&$ 4.9429$&$ 3.1671$&$ 2.5076$&$ 1.9564$&$ 1.5528$\\ \hline
$9$&$ 4.3682$&$ 2.8285$&$ 2.2958$&$ 1.7616$&$ 1.4370$\\ \hline
$10$&$ 4.1182$&$ 2.7624$&$ 2.1065$&$ 1.6160$&$ 1.2793$\\ \hline
\end{tabular}
\end{table}
\end{center}
}

\ignore{
{\tiny
\begin{center}
\begin{table}[h!]
\caption{\label{tab:fkr1} $f_{r,k}^H$ for the Styblinski-Tang function ($n=2$)}
  \begin{tabular}{|c| c | c | c | c | c |}\hline
$k$&$r=1$&$r=2$&$r=3$&$r=4$&$r=5$\\ \hline
$1$ &$-12.5$&$-10.06$&$-8.3333$&$-8.3333$&$-8.3333$\\ \hline
$2$ &$-17.381$&$-17.857$&$-16.919$&$-15.793$&$-14.744$\\ \hline
$3$ &$-21.548$&$-21.686$&$-22.582$&$-23.179$&$-23.62$\\ \hline
$4$ &$-26.429$&$-27.381$&$-28.256$&$-30.263$&$-31.736$\\ \hline
$5$ &$-28.929$&$-31.209$&$-31.167$&$-32.872$&$-34.435$\\ \hline
$6$ &$-31.429$&$-35.038$&$-36.842$&$-38.025$&$-38.906$\\ \hline
$7$ &$-32.778$&$-38.76$&$-42.505$&$-45.109$&$-47.022$\\ \hline
$8$ &$-34.127$&$-42.483$&$-48.179$&$-52.193$&$-55.138$\\ \hline
$9$ &$-34.921$&$-44.387$&$-50.577$&$-54.802$&$-57.837$\\ \hline
$10$ &$-35.714$&$-46.291$&$-52.976$&$-57.411$&$-60.536$\\ \hline
\end{tabular}
\end{table}
\end{center}

\begin{center}
\begin{table}[h!]
\caption{\label{tab:fkr2} $f_{r,k}^H$ for the Rosenbrock function ($n=3$)}
  \begin{tabular}{|c| c | c | c | c | c | }\hline
  $k$&$r=1$&$r=2$&$r=3$&$r=4$&$r=5$\\ \hline
$1$& $794.818$&$727.337$&$698.032$&$683.822$&$676.526$\\ \hline
$2$& $603.931$&$512.228$&$473.974$&$454.193$&$443.769$\\ \hline
$3$& $536.449$&$449.625$&$398.566$&$367.869$&$350.671$\\ \hline
$4$& $478.673$&$368.873$&$294.499$&$253.135$&$227.526$\\ \hline
$5$& $411.191$&$274.121$&$235.89$&$228.906$&$232.996$\\ \hline
$6$& $343.863$&$225.146$&$191.935$&$151.455$&$119.98$\\ \hline
$7$& $314.559$&$225.768$&$166.179$&$128.62$&$106.417$\\ \hline
$8$& $296.24$& $198.861$&$144.94$&$111.721$&$88.0661$\\ \hline
$9$& $266.936$&$185.145$&$133.379$&$103.162$&$84.3506$\\ \hline
$10$&$252.003$&$158.448$&$111.33$&$87.8805$&$70.0394$\\ \hline
\end{tabular}
\end{table}
\end{center}

\begin{center}
\begin{table}[h!]
\caption{\label{tab:fkr3} $f_{r,k}^H$ for the Rosenbrock function ($n=4$)}
  \begin{tabular}{|c| c | c | c | c | c | }\hline
  $k$&$r=1$&$r=2$&$r=3$&$r=4$&$r=5$\\ \hline
$1$ &$1289.9$&$1223.8$&$1194.5$&$1180.3$&$1173.0$\\ \hline
$2$ &$1097.7$&$1006.9$&$968.53$&$948.71$&$938.29$\\ \hline
$3$ &$906.76$&$790.03$&$742.57$&$717.15$&$703.61$\\ \hline
$4$ &$839.28$&$727.43$&$669.06$&$632.76$&$612.44$\\ \hline
$5$ &$781.51$&$606.15$&$502.34$&$446.68$&$413.72$\\ \hline
$6$ &$714.02$&$515.76$&$397.34$&$330.93$&$289.5$\\ \hline
$7$ &$646.68$&$421.01$&$338.74$&$306.71$&$294.97$\\ \hline
$8$ &$579.2$& $371.11$&$293.83$&$229.25$&$181.95$\\ \hline
$9$ &$511.86$&$331.44$&$269.02$&$206.42$&$168.39$\\ \hline
$10$&$482.56$&$323.69$&$246.84$&$189.36$&$149.9$\\ \hline
\end{tabular}
\end{table}
\end{center}
}}

A first important observation is that, for fixed $k$, the values of $f^H_{r,k}$ are not monotonically decreasing in $r$; see e.g.\ the row $k=2$ in Table \ref{tab:fkr1}. Likewise, the sequence $f^H_{r,k}$ is not monotonically decreasing in $k$ for fixed $r$; see, e.g., the column $r=5$ in Table \ref{tab:fkr2}.

On the other hand, it is clear from Tables \ref{tab:fkr1}, \ref{tab:fkr2}, and \ref{tab:fkr3}
that $f^H_{r,k}$ can provide a much better bound than $f^H_{k}$ for $r > 1$.

Since $f^H_{r,k}$ is not monotonically decreasing in $r$ (for fixed $k$), or in $k$ (for fixed $r$), one has to consider the convergence question.
An easy case is when  $\K = [0,1]^n$ and the global minimizer $\x^*$ is rational. Say  $x^*_i = \frac{p_i}{q_i}$ ($i \in [n]$),
\tcolred{setting $q_i=1$ and $p_i=x^*_i$ when  $x^*_i\in \{0,1\}$. Consider the following variation of the parameters $\eta^*_i,\beta^*_i$ from Definition \ref{defbetaparaQ}: $\eta^*_i=rp_i+1$ and $\beta^*_i=r(q_i-p_i)+1$ for $i\in [n]$, so that $\sum_{i=1}^n\eta^*_i+\beta^*_i-2= r(\sum_{i=1}^nq_i)$.
Combining relation (\ref{eqbeta}) and Theorem \ref{thmdif}, we can conclude that
   the following inequality holds: }
\[
f^H_{r,k} -f(\x^*) \le {C_f\over r} \quad \mbox{ for all \   $k \ge \sum_{i=1}^n q_i$ \ and \  $r\ge 1$},
\]
where $C_f$ is a constant that depends on $f$ only.

For more general sets $\K$, one may
ensure convergence by considering instead the following parameter (for fixed $R \in \mathbb{N}$):
\[
\min_{r \in [R]} f^H_{k,r} \le f^H_k \quad (k \in \mathbb{N}).
\]
Then convergence follows from the convergence results for $f^H_{k,r}$.
Moreover, this last parameter may be computed in polynomial time if $k$ is fixed, and $R$ is bounded by a  polynomial in $n$.
\end{itemize}

\subsection*{Acknowledgements}
Etienne de Klerk would like to thank Dorota Kurowicka for valuable discussions on the beta distribution.
The research of Jean B. Lasserre was funded by by the European Research Council (ERC) under the European Union's Horizon 2020 research and innovation program (grant agreement 666981 TAMING).
{We thank two anonymous referees for their useful suggestions that helped improve the presentation of the paper.}

\end{document}